\theoremstyle{plain}
\newtheorem{theorem}{Theorem}[section]
\newtheorem{lemma}[theorem]{Lemma}
\newtheorem{proposition}[theorem]{Proposition}
\newtheorem{corollary}[theorem]{Corollary}
\theoremstyle{definition}
\newtheorem{definition}[theorem]{Definition}
\newtheorem{remark}[theorem]{Remark}
\newtheorem{example}[theorem]{Example}
\theoremstyle{remark}
\mathchardef\emptyset="001F
\numberwithin{equation}{section}
\newcommand{\e}{\varepsilon}
\newcommand{\vphi}{\varphi}
\newcommand{\B}{{\mathbf B}}
\newcommand{\Bpsi}{{\mathbf B}^\psi}
\newcommand{\C}{{\mathbb C}}
\newcommand{\R}{{\mathbb R}}
\newcommand{\N}{\mathbb N}
\newcommand{\M}{\mathbb M}
\newcommand{\hn}{{\mathcal H}^{N-1}}
\renewcommand{\div}{{\rm div}}
\newcommand{\tr}{{\rm trace\,}}
\newcommand{\setmeno}{\!\setminus\!}
\newcommand{\vf}{v_\varphi}
\newcommand{\Htilde}{{\widetilde H}^1_\#}
\newcommand{\Vtilde}{\widetilde {\mathcal{V}}}
\newcommand{\jacg}{J_{\Phi_g}}
\newcommand{\jacn}{J_{\Phi_{g_n}}}
\newcommand{\p}{\mathcal{P}}
\title[Stability for elastic films in two and three dimensions]{Stability of equilibrium configurations for elastic films in two and three dimensions}
\author{M.\ Bonacini}
\address[M.~Bonacini]{SISSA, Via Bonomea 265, 34136 Trieste, Italy}
\email{marco.bonacini@sissa.it}
\subjclass[2010]{49K10, 74G65, 74K35, 74G55}
\keywords{Epitaxially strained elastic films, second variation, local minimality}
\thanks{Preprint SISSA 38/2013/MATE}
\begin{document}

\begin{abstract}
We establish a local minimality sufficiency criterion, based on the strict positivity of the second variation, in the context of a variational model for the epitaxial growth of elastic films. Our result holds also in the three-dimensional case and for a general class of nonlinear elastic energies. Applications to the study of the local minimality of flat morphologies are also shown.
\end{abstract}

\maketitle
%\tableofcontents

%%%%%%%%%%%%%%%%%%%%%%%%%%%%%%%%%%%%%%%%%%%%%%%%%%%%%%%%%%%%%%%%%%%%%%%%%%%%%%%%%%%%%%%%%%%%%%%%%%%%%%%%%%%%%%%%%%%%%%%%%%%%%%%%%%%%%%%%
%%%%%%%%%%%%%%%%%%%%%%%%%%%%%%%%%%%%%%%%%%%%%%%%%%%%%%%%%%%%%%%%%%%%%%%%%%%%%%%%%%%%%%%%%%%%%%%%%%%%%%%%%%%%%%%%%%%%%%%%%%%%%%%%%%%%%%%%
%%%%%%%%%%%%%%%%%%%%%%%%%%%%%%%%%%%%%%%%%%%%%%%%%%%%%%%%%%%%%%%%%%%%%%%%%%%%%%%%%%%%%%%%%%%%%%%%%%%%%%%%%%%%%%%%%%%%%%%%%%%%%%%%%%%%%%%%
%%%%%%%%%%%%%%%%%%%%%%%%%%%%%%%%%%%%%%%%%%%%%%%%%%%%%%%%%%%%%%%%%%%%%%%%%%%%%%%%%%%%%%%%%%%%%%%%%%%%%%%%%%%%%%%%%%%%%%%%%%%%%%%%%%%%%%%%
%%%%%%%%%%%%%%%%%%%%%%%%%%%%%%%%%%%%%%%%%%%%%%%%%%%%%%%%%%%%%%%%%%%%%%%%%%%%%%%%%%%%%%%%%%%%%%%%%%%%%%%%%%%%%%%%%%%%%%%%%%%%%%%%%%%%%%%%

\begin{section}{Introduction}

In the last few years morphological instabilities of interfaces
in systems governed by the competition between volume and surface energies
have been the subject of investigation of several studies.
Such instabilities occur, for instance,
in the mechanism of the epitaxial growth of an elastic film on a relatively thick substrate,
in presence of a mismatch between the lattice structures of the two crystalline solids.
A threshold effect, known as the Asaro-Grinfeld-Tiller (AGT) instability,
characterizes the observed configurations:
after reaching a critical value of the thickness, a flat layer becomes morphologically unstable,
and typically the free surface starts to develop irregularities (see, for instance, \cite{Grin}).

In this paper we continue the rigorous mathematical investigation of this phenomenon started in \cite{BC},
where the existence of minimizing configurations for a two-dimensional variational model
is established in the framework of linearized elasticity.
In \cite{FFLM} a regularity theory for minimizers is developed,
while qualitative properties of equilibrium configurations are studied in \cite{FM} by means of a new local minimality criterion based on the positivity of the second variation of the total energy of the system: in particular, an analytical study of local and global minimality of the flat configuration is carried out in two-dimensions and for the linear elastic case.
We mention also the related papers \cite{Bon}, where anisotropic surface energies are taken into consideration,
and \cite{FFLM2}, which deals with the evolution by surface diffusion of epitaxially strained films.

In the present work we aim at extending the sufficiency minimality criterion introduced in \cite{FM}
to the physically relevant three-dimensional case and to a larger class of nonlinear elastic energies,
which appear in the context of Finite Elasticity.
In addition, as it was done in \cite{Bon}, we will take into account anisotropic surface energies, that is, we will allow the surface term in the total energy to depend on the orientation of the normal to the free surface.

To be more precise, the functional under consideration is defined over pairs $(h,u)$, where $h:\R^{N-1}\to\R$ is a positive, periodic function whose subgraph $\Omega_h$ represents the reference configuration of the film, and $u:\Omega_h\to\R^N$ is a deformation of the reference configuration.
A Dirichlet boundary condition is imposed on the function $u$ at the interface between the film and the flat substrate, forcing the film to be elastically stressed.
The total energy of a pair $(h,u)$ takes the form
$$
F(h,u) = \int_{\Omega_h} W(\nabla u)\,dz + \int_{\Gamma_h} \psi(\nu)\,d\hn,
$$
where $\Gamma_h$ denotes the free surface of $\Omega_h$ (that is, the graph of $h$), $\nu$ is the unit normal to $\Gamma_h$, and $W$ and $\psi$ are the (nonlinear) elastic energy density and the (anisotropic) surface energy density, respectively.
Here the surface tension $\psi$ is assumed to be regular and to satisfy a uniform ellipticity condition (see Section~\ref{sect:settings} for more details).
We say that a pair $(h,u)$ is a \emph{strong local minimizer} for $F$ if $(h,u)$ minimizes the functional among all competitors $(g,v)$ such that $g$ is in a small $L^\infty$-neighborhood of $h$ and satisfies the volume constraint $|\Omega_g|=|\Omega_h|$, and the gradients of the deformations $\nabla u$, $\nabla v$ are close in $L^\infty$.
Necessary conditions for local minimality are the first order conditions
\begin{align} \label{intro}
\left\{
  \begin{array}{ll}
    \div(DW(\nabla u)) = 0 & \hbox{in }\Omega_h, \\
    DW(\nabla u)[\nu] = 0 & \hbox{on }\Gamma_h, \\
    W(\nabla u) + H^\psi = \hbox{const} & \hbox{on }\Gamma_h,
  \end{array}
\right.
\end{align}
where $H^\psi$ denotes the anisotropic mean curvature of $\Gamma_h$.

In the main result of the paper we provide a sufficient condition for a \emph{critical pair} (that is, a pair $(h,u)$ satisfying \eqref{intro}) to locally minimize the total energy:
precisely, we show that \emph{any regular critical configuration with strictly positive second variation
is a strong local minimizer for $F$}, according to the previous definition (Theorem~\ref{cor:locmin}).
We also prove a stronger result in the case of linear elasticity (see Theorem~\ref{teo:linelas}), namely we replace the $L^\infty$-closeness of the deformation gradients appearing in the definition of local minimizer by a uniform bound on the Lipschitz constant of the deformations.

Although the question whether strict stability implies local minimality is very classical for the standard functionals of the Calculus of Variations, its investigation in the context of free-discontinuity problems has been started only in recent years: in particular, in addition to \cite{FM}, we refer to \cite{CMM, BonMor}, which deal with the Mumford-Shah functional, to \cite{AFM} for a nonlocal isoperimetric problem arising in the modeling of microphase separation in diblock copolymers, and to \cite{CJP} for a variational model dealing with cavities in elastic bodies.

Our minimality criterion can be applied to the study of the local minimality of flat morphologies,
when the amount of material deposited is small.
We will also prove the interesting fact, firstly observed in \cite{Bon}, that for crystalline anisotropies, whose Wulff shape contains a flat horizontal facet, the AGT instability is suppressed, that is the flat configuration is always a local minimizer,
no matter how thick the film is.

We also mention that our result could be useful to deal with the three-dimensional version of the elastic film evolution by surface diffusion with curvature regularization, studied in \cite{FFLM2} in the two-dimensional case. In particular, it is a natural question in this context to ask whether the strict positivity of the second variation guarantees the Lyapunov stability with respect to this evolution; we think that our criterion could be instrumental in establishing such a result.

One of the crucial difficulties that arise when treating the three-dimensional case is the lack of a regularity theory for minimizers, which prevents us to extend completely the results of \cite{FM}. This is the reason why the minimality property that we are able to prove is weaker than the one considered in \cite{FM}, as it requires the $L^\infty$-closeness of the deformation gradients (or a bound on the Lipschitz constant of the deformation in the linear elastic case). While this constraint seems to be not too restrictive in the nonlinear case, we expect that in the linearized framework the local minimality should hold without such a condition; however, our strategy to improve the result in this direction needs a regularity theory which is not yet available in three dimensions.

\smallskip
We now describe with some additional details the strategy leading to our main result.
We first introduce the notion of admissible variation of a critical pair $(h,u)$,
by considering the deformed profiles $h_t:= h + t\phi$, for $t\in\R$, where $\phi\in C^\infty(\R^{N-1})$ is any periodic function with zero mean value. One of the difficulties which arise in the nonlinear context is the issue of the existence of a critical point for the elastic energy in the deformed domain $\Omega_{h_t}$ (that is, a deformation satisfying the first two conditions of \eqref{intro} in $\Omega_{h_t}$).
Nevertheless, by the Implicit Function Theorem we show that, if the elastic second variation at $u$ is uniformly positive in $\Omega_h$ (see condition \eqref{c0}), it is possible to find a critical point $u_g$ for the elastic energy in $\Omega_g$ (which in addition locally minimizes the elastic energy), provided that $g$ is sufficiently close to $h$ in the $W^{2,p}$-topology (see Proposition~\ref{IFT} and Proposition~\ref{prop:minEl}). This allows us to consider a one-parameter family of variations $(h_t,u_{h_t})$ and to define the \emph{second variation of the functional} at the critical pair $(h,u)$ along the direction $\phi$ as the second derivative at $t=0$ of the map $t\mapsto F(h_t,u_{h_t})$.

The explicit computation of the second variation, performed in Theorem~\ref{th:var2}, will show that it can be expressed in terms of a nonlocal quadratic form $\partial^2F(h,u)$ defined on the space $\Htilde(\Gamma_h)$ of the periodic functions $\vphi\in H^1(\Gamma_h)$ such that $\int_{\Gamma_h}\vphi\,d\hn=0$. Then the strict stability condition reads as
\begin{equation} \label{intro2}
\partial^2F(h,u)[\vphi]>C\|\vphi\|_{H^1(\Gamma_h)}^2 \qquad\text{for every }\vphi\in\Htilde(\Gamma_h).
\end{equation}

The proof of the sufficiency of \eqref{intro2} for strong local minimality is inspired by the two-steps strategy devised in \cite{FM}. Firstly, we show that condition \eqref{intro2} is sufficient, in dimension $N=2,3$, for a weaker notion of local minimality, namely with respect to competitors $(g,v)$ with $\|g-h\|_{W^{2,p}}$ sufficiently small.
Since the expression of the second variation involves the trace of the gradient of $W(\nabla u)$ on $\Gamma_h$, a crucial point in the proof of this result consists in controlling this term in a proper Sobolev space of negative fractional order.
We overcome this difficulty by proving careful new estimates for the elliptic system associated with the first variation of the elastic energy in Lemma~\ref{lem:algebra}, which provides a highly non-trivial generalization to the three-dimensional and nonlinear cases of the estimates proved in \cite[Lemma~4.1]{FM}.

The second part of the proof consists in showing that, in any dimension, the aforementioned weaker notion of minimality implies the desired strong local minimality. This is obtained by a contradiction argument: assuming the existence of a sequence $(g_n,v_n)$ converging to $(h,u)$ and violating the minimality of $(h,u)$, one replaces $(g_n,v_n)$ by a new pair $(k_n,w_n)$ selected as solution to a suitable penalized minimum problem, whose energy is still below the energy of $(h,u)$.
Due to minimality, the pairs $(k_n,w_n)$ enjoy better regularity properties: since the $L^\infty$-bound on the deformation gradients allows us to regard the elastic energy as a volume perturbation of the surface area, we may appeal to the regularity theory for quasi-minimizers of the area functional to deduce the $C^{1,\alpha}$-convergence of $k_n$ to $h$. In turn, with the aid of the Euler-Lagrange equations for the minimum problem solved by $(k_n,w_n)$ we obtain the $W^{2,p}$-convergence of $k_n$ to $h$, and we reach a contradiction to the local minimality of $(h,u)$ with respect to $W^{2,p}$-perturbations established in the first step of the proof.

\smallskip
The paper is organized as follows.
We introduce the variational model and the basic definitions in Section~\ref{sect:settings}.
As pointed out in the previous discussion, we need to find deformations which locally minimize the elastic energy in th perturbed reference configurations: this is done in Section~\ref{sect:critel}.
The explicit computation of the second variation is carried out in Section~\ref{sect:varII}, where we also prove two different, equivalent formulations of condition \eqref{intro2}.
In Section~\ref{sect:locmin} we start the proof of the main result of the paper, showing that the strict stability of a critical pair implies local minimality in the $W^{2,p}$-sense; in Section~\ref{sect:WimpliesL} we prove that, in any dimension, local $W^{2,p}$-minimizers are strong local minimizers, and we show how the results can be strengthen in the linear elastic case.
Section~\ref{sect:flat} is devoted to the study of the stability of flat morphologies.
In the final Appendix we collect some auxiliary results that are needed in the rest of the paper.

\end{section}

%%%%%%%%%%%%%%%%%%%%%%%%%%%%%%%%%%%%%%%%%%%%%%%%%%%%%%%%%%%%%%%%%%%%%%%%%%%%%%%%%%%%%%%%%%%%%%%%%%%%%%%%%%%%%%%%%%%%%%%%%%%%%%%%%%%%%%%%
%%%%%%%%%%%%%%%%%%%%%%%%%%%%%%%%%%%%%%%%%%%%%%%%%%%%%%%%%%%%%%%%%%%%%%%%%%%%%%%%%%%%%%%%%%%%%%%%%%%%%%%%%%%%%%%%%%%%%%%%%%%%%%%%%%%%%%%%
%%%%%%%%%%%%%%%%%%%%%%%%%%%%%%%%%%%%%%%%%%%%%%%%%%%%%%%%%%%%%%%%%%%%%%%%%%%%%%%%%%%%%%%%%%%%%%%%%%%%%%%%%%%%%%%%%%%%%%%%%%%%%%%%%%%%%%%%
%%%%%%%%%%%%%%%%%%%%%%%%%%%%%%%%%%%%%%%%%%%%%%%%%%%%%%%%%%%%%%%%%%%%%%%%%%%%%%%%%%%%%%%%%%%%%%%%%%%%%%%%%%%%%%%%%%%%%%%%%%%%%%%%%%%%%%%%
%%%%%%%%%%%%%%%%%%%%%%%%%%%%%%%%%%%%%%%%%%%%%%%%%%%%%%%%%%%%%%%%%%%%%%%%%%%%%%%%%%%%%%%%%%%%%%%%%%%%%%%%%%%%%%%%%%%%%%%%%%%%%%%%%%%%%%%%

\begin{section}{Setting of the problem} \label{sect:settings}

In this section we introduce the notation used in the paper
and we describe the setting of the variational problem that we consider.

\subsection{General notation}
We denote by $\M^{N}$ the space of $N\times N$ real matrices
and by $\M^{N}_+$ its subset of matrices with positive determinant.
The scalar product in $\M^N$ is defined by
$A:B := {\rm trace}\,(A^TB)$,
where $A^T$ is the transpose of $A$,
and we denote by $|A|$ the associated euclidean norm.
The symbol $I$ stands for the identity matrix, while $Id:\R^N\to\R^N$ denotes the identity map.
We also deal with \emph{fourth order tensors},
which are linear transformations of the space $\M^{N}$ into itself.
We denote the action of such a tensor $C$ on a matrix $M$ by $CM$.

We write every vector $z \in \mathbb{R}^N$, $N\geq2$, as
$z = (x,y)$,
where $x \in \mathbb{R}^{N-1}$ is the orthogonal projection of $z$
on the hyperplane spanned by $\{ e_1, \ldots, e_{N-1} \}$
and $y \in \mathbb{R}$.
Here $e_1,\dots, e_N$ are the vectors of the canonical basis of $\R^N$.
We denote by $\R^N_+:=\{(x,y)\in\R^N:y>0\}$ and $\R^N_-:=\{(x,y)\in\R^N:y<0\}$
the upper and lower half-space, respectively.

Let $Q = (0,1)^{N-1}$ be the unit square in $\R^{N-1}$.
For $p\in[1,+\infty]$ and $k\geq0$, we denote by $W^{k,p}_\#(Q)$
the set of functions $h:\R^{N-1}\to(0,+\infty)$ of class $W^{k,p}_{loc}(\R^{N-1})$
which are one-periodic with respect to all the coordinate directions,
endowed with the norm $\|\cdot\|_{W^{k,p}(Q)}$.
Similarly, $C^k_\#(Q)$ and $C^{k,\alpha}_\#(Q)$, for $\alpha\in(0,1)$,
denote the sets of one-periodic functions $h:\R^{N-1}\to(0,+\infty)$ of class $C^k$ and $C^{k,\alpha}$, respectively.

Given a smooth orientable $(N-1)$-dimensional manifold $\Gamma \subset \R^N$,
we denote by $\nu$ a normal vector field on $\Gamma$.
If $g:\mathcal{U}\to\R^d$ is a smooth vector-valued function defined in a tubular neighborhood $\mathcal{U}$ of $\Gamma$,
we denote by $\nabla_\Gamma g$ its tangential differential (which we identify with a matrix)
and, if $d=N$, by $\div_\Gamma g$ its tangential divergence.
We refer to \cite[Chapter~2, Section~7]{Sim} for the definition of these tangential differential operators
and for some related identities (in particular, we will make use of the divergence formula, which allows to extend to tangential operators the usual integration by parts formula).
For every $x\in\Gamma$ we set
\begin{equation*}
\B(x):= \nabla_{\Gamma}\nu(x) = \nabla\nu(x), \qquad H(x):=\div\, \nu(x) = \div_{\Gamma} \nu(x )=\tr \B(x).
\end{equation*}
The bilinear form associated with $\B(x)$ is symmetric and, when restricted to $T_x \Gamma{\times}T_x \Gamma$,
it coincides with the {\em second fundamental form of $\Gamma$ at $x$},
while the value $H(x)$ coincides with the {\em mean curvature of $\Gamma$ at $x$}.
If $\psi:\R^N\setmeno\{0\}\to(0,+\infty)$ is a smooth, positively 1-homogeneous and convex function,
we define the {\em anisotropic second fundamental form of $\Gamma$}
and the {\em anisotropic mean curvature of $\Gamma$} by
\begin{equation}\label{defHpsi}
\Bpsi :=  \nabla ( \nabla \psi \circ \nu ), \qquad
H^{\psi}:= \text{trace} \, \Bpsi = \div\, ( \nabla \psi  \circ \nu )
\end{equation}
respectively.
Note that, also in this case, we have
$H^{\psi} = \div_{\Gamma} \, ( \nabla \psi  \circ \nu )$ on $\Gamma$.
Finally, if $\Phi:\R^N\to\R^N$ is a smooth orientation-preserving diffeomorphism,
we denote by $J_\Phi:=|(\nabla\Phi)^{-T}[\nu]|\det \nabla\Phi$ the $(N-1)$-dimensional Jacobian of $\Phi$ on $\Gamma$.

\subsection{The variational model}
We now describe the variational model which will be the subject of this work,
bearing in mind the two-dimensional setting introduced in \cite{BC,FM}.
We first introduce the class of \emph{admissible profiles},
given by Lipschitz, strictly positive and periodic functions:
\begin{align*}
AP(Q):= \Bigl\{ h &: \R^{N-1}\to(0,+\infty) \, : \,  h \text{ is Lipschitz continuous, } \\
&h(x+e_i)=h(x) \text{ for every } x\in\R^{N-1} \text{ and } i=1,\ldots,N-1 \Bigr\} \, .
\end{align*}
Given $h \in AP(Q)$, we define the associated reference configuration $\Omega_h$
and its periodic extension $\Omega_h^\#$ to be the sets
\begin{align*}
\Omega_h := \bigl\{ (x,y) \in \R^N : x\in Q, \; 0 < y < h(x) \bigr\},
\quad
\Omega^{\#}_h := \bigl\{ (x,y) \in \R^N : 0 < y < h(x) \bigr\}
\end{align*}
respectively, and the graph $\Gamma_h$ of $h$ and its periodic extension $\Gamma_h^{\#}$,
representing the \emph{free profile},
$$
\Gamma_h := \bigl\{ (x,h(x)) \in \R^N : x\in Q \bigr\},
\quad
\Gamma_h^{\#} := \bigl\{ (x,h(x)) \in \R^N : x\in \R^{N-1} \bigr\}.
$$
We also introduce the following space of admissible elastic variations:
\begin{align*}
\mathcal{V}(\Omega_h) : =
\Bigl\{ w \in W^{1,\infty}(\Omega_h^\#;\R^N) :\,
w(x,0)&=0, \, w(x + e_i, y ) = w(x,y) \\
&\text{ for all } (x,y) \in \Omega_h^\# \text{ and } i=1,\ldots,N-1 \Bigr\}\,,
\end{align*}
and we will denote by $\Vtilde(\Omega_h)$ the completion of $\mathcal{V}(\Omega_h)$ with respect to the norm of $H^1(\Omega_h;\R^N)$.
Since we assume to be in presence of a mismatch strain at the interface $\{y=0\}$,
we prescribe a boundary Dirichlet datum in the form
$$
u_0 (x,y) := (A[x] + q(x),0),
$$
where $A \in \M^{N-1}_+$ and $q:\R^{N-1}\to\R^{N-1}$ is a smooth function, one-periodic with respect to the coordinate directions.
We can finally define the space of \emph{admissible pairs}
\begin{align*}
X=\Bigl\{(h,u)\in AP(Q) \times W^{1,\infty}(\Omega_h^\#;\R^N) \,:\, u - u_0\in\mathcal{V}(\Omega_h),\;
\det\nabla u(z)>0 \text{ for a.e. }z\in\Omega_h \Bigr\} \, .
\end{align*}

In order to introduce the functional on $X$ which represents the total energy of the system,
we define the \emph{elastic energy density} and the \emph{anisotropic surface energy density} to be, respectively:
\begin{itemize}
  \item $W: \M^{N}_+ \to [0, + \infty)$ of class $C^3$,
  \item $\psi: \R^N \to [0, + \infty)$, of class $C^3$ away from the origin, positively 1-homogeneous, such that
      \begin{equation} \label{boundpsi}
      m|z|\leq\psi(z)\leq M|z| \qquad\text{for all }z\in\R^N
      \end{equation}
      for some positive constants $m,M,$ and satisfying the following condition of uniform convexity:  for every $v \in S^{N-1}$
      \begin{equation} \label{pospsi}
      \nabla^2 \psi (v) [w,w] > \bar{c} \, |w|^2 \quad \text{ for all }w \perp v,
      \end{equation}
      for some constant $\bar{c}>0$.
\end{itemize}
Finally, we define the functional on $X$
$$
F (h,u) := \int_{\Omega_h} W (\nabla u)\, dz + \int_{\Gamma_h} \psi (\nu_h) \, d \mathcal{H}^{N-1},
$$
where $\nu_h$ denotes the exterior unit normal vector to $\Omega_h$ on $\Gamma_h$
(we shall omit the subscript $h$ when there is no risk of ambiguity).

\begin{remark}
Although, for the sake of simplicity, we assume that $W$ is defined on the space $\M^N_+$ of the matrices with positive determinant, the results contained in this paper are valid also for a general nonlinear density $W$ of class $C^3$, defined only on an open subset $\mathcal{O}$ of $\M^N$; in this case the space $X$ should be replaced by the following space of admissible pairs:
$$
\Bigl\{(h,u)\in AP(Q) \times W^{1,\infty}(\Omega_h^\#;\R^N) \,:\, u - u_0\in\mathcal{V}(\Omega_h),\;
\nabla u(z)\in\mathcal{O} \text{ for a.e. }z\in\Omega_h \Bigr\} \, .
$$
The physically relevant condition that $W(\xi)\to+\infty$ as $\det\xi\to0^+$, which is customary in Finite Elasticity, is compatible with our assumption.
When $W$ is a quasi-convex function defined on the whole space $\M^N$ and satisfying standard $p$-growth conditions, the definition of the functional $F$ can be extended to a larger class of admissible pairs by a relaxation procedure (see \cite{ChaSol}).
\end{remark}

We will denote the derivatives of $W$ by
$$
W_\xi(\xi) := DW(\xi) = \biggl( \frac{\partial W}{\partial\xi_{ij}}(\xi) \biggr)_{ij}, \qquad
W_{\xi\xi}(\xi) := DW(\xi) = \biggl( \frac{\partial^2 W}{\partial\xi_{ij}\partial\xi_{hk}}(\xi) \biggr)_{ijhk}.
$$
We now give the definitions of critical point for the elastic energy in a given reference configuration $\Omega_h$,
and of critical pair for the functional $F$.

\begin{definition} \label{def:critpoint}
Let $(h,u) \in X$ with $u\in C^1(\overline{\Omega}_h^\#;\R^N)$.
The function $u$ is said to be a \textit{critical point for the elastic energy in $\Omega_h$} if
\begin{equation} \label{VolEul}
\int_{\Omega_h} W_{\xi} (\nabla u): \nabla w \, dz = 0
\qquad\text{for every }w \in \mathcal{V}(\Omega_h).
\end{equation}
Notice that, by periodicity, \eqref{VolEul} is equivalent to
$$
\begin{cases}
\div \left[ W_{\xi} (\nabla u) \right] = 0 & \text{ in }\Omega^{\#}_h, \\
W_{\xi} (\nabla u) [\nu] = 0 & \text{ on } \Gamma_h^{\#}.
\end{cases}
$$
\end{definition}

\begin{definition} \label{def:critpair}
We say that a pair $(h,u)\in X$ is a \textit{(regular) critical pair} for $F$
if $h\in C^2_\#(Q)$, $u\in C^2(\overline{\Omega}_h^\#;\R^N)$ is a critical point for the elastic energy in $\Omega_h$,
and the following condition holds:
\begin{equation} \label{critpair}
W(\nabla u)+ H^\psi = \text{const} \qquad \text{on }\Gamma_h.
\end{equation}
\end{definition}

In the main result of the paper (Theorem~\ref{cor:locmin}) we provide a sufficient condition for a critical pair $(h,u)\in X$
to be a \emph{local minimizer} of the functional $F$ under volume constraint.

The regularity assumptions on a critical pair $(h,u)$ allow us to extend $u$ to a slightly larger domain, preserving the property that the deformation gradient $\nabla u$ has positive determinant.
More precisely, given a critical pair $(h,u)$ we can find an open set $\Omega'$ of the form $\Omega_{h+\eta}$, for some $\eta>0$, with the following property: denoting by $\Omega'_\#$ the periodic extension of $\Omega'$,
we can extend $u$ to a periodic function of class $C^1$ in $\overline{\Omega}'_\#$
in such a way that $\det\nabla u(z)>0$ for every $z\in\overline{\Omega}'$.
This induces us to consider the following class of competitors:
\begin{align} \label{spaziocompet}
X' := \bigl\{ (g,v)\in X \,:\, \Omega_g\subset\Omega',\; v\in W^{1,\infty}(\Omega'_\#;\R^N),\;
\det\nabla v(z)>0 \text{ for a.e. }z\in\Omega' \bigr\}\,.
\end{align}
We then consider the following notion of local minimality.

\begin{definition} \label{def:locmin}
Let $(h,u)\in X$ be a critical pair for $F$.
We say that $(h,u)$ is a \textit{local minimizer} for $F$ if
there exists $\delta>0$ such that
\begin{equation} \label{locmin}
F(h,u)\leq F(g,v)
\end{equation}
for all $(g,v)\in X'$ with $\|g-h\|_{\infty}<\delta$, $|\Omega_g|=|\Omega_h|$, and
$\|\nabla v - \nabla u\|_{L^\infty(\Omega';\M^{N})}<\delta$.
We say that $(h,u)$ is an \textit{isolated local minimizer} if \eqref{locmin} holds with strict inequality when $g \neq h$.
\end{definition}

\begin{remark} \label{rm:diffeo}
The following construction will be used several times throughout the paper.
Given any admissible profile $h\in AP(Q)$,
we associate with every $g \in AP(Q)$ in a sufficiently small $L^\infty$-neighborhood of $h$
a map $\Phi_g: \overline{\Omega}_h^\# \to \overline{\Omega}_g^{\#}$ with the properties:
\begin{itemize}
  \item $\Phi_g(x,0)=(x,0)$ for every $x\in\R^{N-1}$;
  \item $\Phi_g(x,y)=(x,y+g(x)-h(x))$ in a neighborhood of $\Gamma_h^\#$;
  \item $\Phi_g(x+e_i,y)=\Phi_g(x,y)+(e_i,0)$ for $(x,y)\in \overline{\Omega}_h^{\#}$ and $i=1,\ldots,N-1$;
  \item $\Phi_g$ satisfies the following estimate:
    \begin{equation}\label{diffeo0}
    \| \Phi_g - Id \|_{L^\infty (\Omega_h;\R^N)} \leq \| g- h \|_{L^\infty(Q)}.
    \end{equation}
\end{itemize}
We can explicitly construct the diffeomorphism $\Phi_g$ as follows.
Setting $m_0:=\min h>0$, we fix a nonnegative cut-off function
$\rho\in C^\infty_c(-\frac{m_0}{2},\frac{m_0}{2})$ with $\rho\equiv1$ in $(\frac{m_0}{4},\frac{m_0}{4})$.
Then it is easily seen that, if $\|g-h\|_\infty<\frac{m_0}{4}$, the map
$$
\Phi_g(x,y):=\bigl( x , y + \rho(y-h(x))(g(x)-h(x)) \bigr)
$$
satisfies all the previous conditions.
\end{remark}

\begin{remark}
We note here for later use that, as a consequence of the positive 1-homogeneity of the anisotropy $\psi$,
\begin{equation} \label{hesspsi}
\nabla^2 \psi (v) [v] = 0 \quad
\text{ for every }v \in \R^N\setmeno\{0\}.
\end{equation}
Moreover, given a sufficiently regular admissible profile $h$,
we can prove the following explicit formula for the anisotropic mean curvature of $\Gamma_h$
(see \eqref{defHpsi} for the definition):
\begin{equation} \label{Hpsi}
H^\psi(x,h(x)) = \sum_{i=1}^{N-1}\frac{\partial}{\partial x_i} \Bigl( \frac{\partial\psi}{\partial z_i}(-\nabla h(x),1) \Bigr).
\end{equation}
In fact, observe that by \eqref{hesspsi} we have $\nabla^2\psi(-\nabla h,1)[(-\nabla h,1)]=0$,
that is,
$$
\sum_{j=1}^{N-1} \frac{\partial^2\psi}{\partial z_i\partial z_j}(-\nabla h,1)\frac{\partial h}{\partial x_j} = \frac{\partial^2\psi}{\partial z_i\partial z_N}(-\nabla h,1)
$$
for $i=1,\ldots,N$.
Hence, as $\nabla\psi$ is 0-homogeneous, a straightforward computation yields
\begin{align*}
H^\psi(x,h(x)) &= \div_{\Gamma_h}(\nabla\psi\circ\nu)|_{(x,h(x))}
= - \sum_{j,k=1}^{N-1} \frac{\partial^2\psi}{\partial z_j\partial z_k}(-\nabla h,1) \frac{\partial^2 h}{\partial x_k\partial x_j} \\
& + \frac{1}{1+|\nabla h|^2} \sum_{i,k=1}^{N-1} \biggl( \sum_{j=1}^{N-1} \frac{\partial^2\psi}{\partial z_k\partial z_j}(-\nabla h,1)
 \frac{\partial h}{\partial x_j} - \frac{\partial^2\psi}{\partial z_k\partial z_N}(-\nabla h,1) \biggr)
 \frac{\partial^2 h}{\partial x_k\partial x_i}\frac{\partial h}{\partial x_i},
\end{align*}
from which \eqref{Hpsi} follows by using the previous equality.
\end{remark}

\end{section}

%%%%%%%%%%%%%%%%%%%%%%%%%%%%%%%%%%%%%%%%%%%%%%%%%%%%%%%%%%%%%%%%%%%%%%%%%%%%%%%%%%%%%%%%%%%%%%%%%%%%%%%%%%%%%%%%%%%%%%%%%%%%%%%%%%%%%%%%
%%%%%%%%%%%%%%%%%%%%%%%%%%%%%%%%%%%%%%%%%%%%%%%%%%%%%%%%%%%%%%%%%%%%%%%%%%%%%%%%%%%%%%%%%%%%%%%%%%%%%%%%%%%%%%%%%%%%%%%%%%%%%%%%%%%%%%%%
%%%%%%%%%%%%%%%%%%%%%%%%%%%%%%%%%%%%%%%%%%%%%%%%%%%%%%%%%%%%%%%%%%%%%%%%%%%%%%%%%%%%%%%%%%%%%%%%%%%%%%%%%%%%%%%%%%%%%%%%%%%%%%%%%%%%%%%%
%%%%%%%%%%%%%%%%%%%%%%%%%%%%%%%%%%%%%%%%%%%%%%%%%%%%%%%%%%%%%%%%%%%%%%%%%%%%%%%%%%%%%%%%%%%%%%%%%%%%%%%%%%%%%%%%%%%%%%%%%%%%%%%%%%%%%%%%
%%%%%%%%%%%%%%%%%%%%%%%%%%%%%%%%%%%%%%%%%%%%%%%%%%%%%%%%%%%%%%%%%%%%%%%%%%%%%%%%%%%%%%%%%%%%%%%%%%%%%%%%%%%%%%%%%%%%%%%%%%%%%%%%%%%%%%%%

\begin{section}{Critical points for the elastic energy}\label{sect:critel}

The purpose of this section is to associate with every $g$ close to $h$ (in some norm, to be specified)
a deformation $u_g$ such that, if $g$ is fixed, the map $v \mapsto F(g,v)$ has a local minimum at $u_g$.
If this is the case, then in order to prove the local minimality of an admissible pair $(h,u)$
it will be sufficient to compare $F(h,u)$
only with the values of $F$ at pairs of the form $(g,u_g)$,
avoiding in some sense the dependence on the second variable.
The Implicit Function Theorem guarantees that this is in fact possible,
under suitable assumptions on the starting pair $(h,u)$.

\begin{definition} \label{def:minEl}
Let $(h,u)\in X$,
and assume that $u$ is a critical point for the elastic energy in $\Omega_h$,
according to Definition~\ref{def:critpoint}.
We say that $u$ is a \textit{strict $\delta$-local minimizer for the elastic energy in $\Omega_h$},
for $\delta > 0$, if
$$
\int_{\Omega_h} W (\nabla u ) \, dz < \int_{\Omega_h} W (\nabla u + \nabla w) \, dz
$$
whenever $w \in \mathcal{V}(\Omega_h)$ and
$0 < \| \nabla w \|_{L^{\infty} ( \Omega_h ; \M^{N})} \leq \delta$.
\end{definition}

We now provide suitable assumptions on a pair $(h,u)$,
with $u$ critical point for the elastic energy in $\Omega_h$,
which guarantee that if $g$ is a small $W^{2,p}$-perturbation of the profile $h$
then we can find a critical point $u_g$ for the elastic energy in $\Omega_g$
which in addition locally minimizes the elastic energy.
In order to do this, we introduce a fourth order symmetric tensor field, associated with a deformation $u$ in a domain $\Omega_h$, setting
$$
C_u (z):= W_{\xi \xi} (\nabla u (z)) \qquad\text{for every }z \in \overline{\Omega}^{\#}_h\,.
$$

\begin{definition} \label{def:stable1}
Let $(h,u)\in X$ .
We say that the elastic second variation is uniformly positive at $u$ in $\Omega_h$
if there exists a positive constant $c_0$ such that
\begin{equation} \label{c0}
\int_{\Omega_h} C_u \nabla w \! : \! \nabla w \, dz  \geq c_0 \| w \|^2_{H^1 (\Omega_h; \R^N)}
\qquad\text{for every } w \in \Vtilde(\Omega_h)\,,
\end{equation}
where we recall that $\Vtilde(\Omega_h)$ denotes the completion of $\mathcal{V}(\Omega_h)$ with respect to the norm of $H^1(\Omega_h;\R^N)$.
\end{definition}

Arguing as in \cite[Theorem 1]{SS87}, it is possible to prove\footnote{In view of the Remark following \cite[Proposition~9.4]{SS87}, our regularity assumptions on $W$ and $(h,u)$ are sufficient to guarantee the validity of the stated result.} the following equivalent formulation of condition \eqref{c0}.

\begin{theorem} \label{teo:simpsonspector}
Let $(h,u)\in X$ be such that $h\in C^2_\#(Q)$ and $u\in C^2(\overline{\Omega}_h^\#;\R^N)$
is a critical point for the elastic energy in $\Omega_h$.
Then \eqref{c0} holds (with some positive constant $c_0$ depending only on the pair $(h,u)$)
if and only if the following three conditions are satisfied:
\begin{itemize}
\smallskip
\item[(H1)] for all $z \in \Omega_h$ the fourth order tensor $C_u (z)$
satisfies the \emph{strong ellipticity condition}, that is
$$
C_u (z) M : M> 0
$$
whenever $M= a\otimes b$ with $a\neq0$, $b\neq0$;
\smallskip

\item[(H2)] for all $z_0 \in\Gamma_h$ the boundary value problem
$$
\begin{cases}
\div \left[ C_u (z_0) \nabla v \right] = 0 &\text{ in }H_{\nu (z_0)}, \\
\left( C_u (z_0) \nabla v \right) [\nu (z_0)] = 0 &\text{ on }\partial H_{\nu (z_0)},
\end{cases}
$$
where
$$
H_{\nu(z_0)} := \{ z \in \mathbb{R}^N : z \cdot \nu (z_0) > 0 \},
$$
satisfies the \emph{complementing condition}, \emph{i.e.}, the only bounded exponential solution to the previous equation is $v\equiv 0$. By \emph{bounded exponential} we mean a solution of the form
$$
v(z)= {\rm Re} \bigl[ f(z\cdot\nu(z_0)) \, e^{i(z\cdot b)} \bigr]
$$
for some $b\in\partial H_{\nu(z_0)}\setmeno\{0\}$ and $f\in C^\infty([0,+\infty), \C^N)$ satisfying $\sup_s|f(s)|<\infty$;
\smallskip

\item[(H3)] the elastic second variation is strictly positive, that is, for every $w \in \Vtilde(\Omega_h) \setmeno\{0\}$
$$
\int_{\Omega_h} C_u \nabla w \! : \! \nabla w \, dz > 0 \,.
$$
\end{itemize}
\end{theorem}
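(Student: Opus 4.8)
The plan is to follow the argument of \cite[Theorem~1]{SS87}, adapting it to the periodic geometry of $\Omega_h$ and using that the $C^2$/$C^3$ regularity assumed here suffices (as recalled in the footnote); I treat the two implications separately. \emph{Necessity of \textup{(H1)--(H3)} from \eqref{c0}.} Condition (H3) is an immediate consequence of \eqref{c0}, which also furnishes the nonnegativity used below. To prove (H1), fix $z_0\in\Omega_h$ and a rank-one matrix $a\otimes b$, and test \eqref{c0} with the oscillating admissible variations
$$
w_n(z):=\tfrac1n\,a\,\zeta(z)\cos(n\,b\cdot z),
$$
where $\zeta\in C^\infty_c$ is supported in a small ball $B_r(z_0)\Subset\Omega_h$ (extended periodically, so that $w_n\in\mathcal{V}(\Omega_h)$). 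Since $\nabla w_n=-(a\otimes b)\,\zeta(z)\sin(n\,b\cdot z)+O(1/n)$ in $L^\infty$, one has $\|w_n\|_{L^2}^2\to0$, $\|\nabla w_n\|_{L^2}^2\to\tfrac12|a|^2|b|^2\int\zeta^2$, and $\int_{\Omega_h}C_u\nabla w_n:\nabla w_n\,dz\to\tfrac12\int\zeta^2\,C_u(z)(a\otimes b):(a\otimes b)\,dz$; passing to the limit in \eqref{c0} and then letting $r\to0$ yields $C_u(z_0)(a\otimes b):(a\otimes b)\ge c_0|a|^2|b|^2>0$. For (H2) I argue by contradiction: if at some $z_0\in\Gamma_h$ there existed a nontrivial bounded exponential solution $v$ of the frozen half-space problem, then suitably rescaled and truncated copies of $v$, transplanted to a shrinking neighbourhood of $z_0$ inside $\Omega_h$ after flattening $\Gamma_h$ near $z_0$, would produce admissible variations $w_n$ with $\|w_n\|_{H^1}$ bounded below and $\int_{\Omega_h}C_u\nabla w_n:\nabla w_n\,dz\to0$, contradicting \eqref{c0}; this is precisely the construction proving the necessity of the complementing condition for nonnegativity of the second variation, carried out in \cite{SS87}.

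\emph{Sufficiency of \textup{(H1)--(H3)}.} The central point is a Gårding inequality: (H1) and (H2) imply the existence of constants $c_1>0$, $c_2\ge0$ with
$$
\int_{\Omega_h}C_u\nabla w:\nabla w\,dz\ \ge\ c_1\|\nabla w\|_{L^2(\Omega_h)}^2-c_2\|w\|_{L^2(\Omega_h)}^2
$$
for every $w\in\mathcal{V}(\Omega_h)$, hence, by density, for every $w\in\Vtilde(\Omega_h)$. This is obtained by freezing coefficients: since $\overline\Omega_h$ is compact (after the periodic identification) and $C_u$ is uniformly continuous on it, one covers $\overline\Omega_h$ by finitely many small patches and, on each of them, replaces $C_u$ by its value at the centre, with an error controlled by the modulus of continuity. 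On interior patches the constant-coefficient estimate $\int C_0\nabla w:\nabla w\ge\lambda_0\|\nabla w\|_{L^2}^2$ follows from Plancherel's theorem and the uniform rank-one positivity of $C_u$ given by (H1); the patches centred on the Dirichlet portion $Q\times\{0\}$ reduce to interior ones by extending $w$ by zero across $\{y=0\}$; and on patches centred at points of $\Gamma_h$ one invokes, after flattening the boundary, the half-space coercivity estimate for constant-coefficient systems that are strongly elliptic \emph{and} satisfy the complementing condition (H2) on the Neumann portion, the cut-off cross terms being absorbed into $\|w\|_{L^2}^2$. This is exactly the content of \cite[Theorem~1]{SS87}, whose proof we reproduce with only notational changes dictated by the periodic setting.

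Finally one upgrades the Gårding inequality to \eqref{c0} by a standard compactness argument using (H3). If \eqref{c0} failed for every $c_0>0$ there would exist $w_n\in\Vtilde(\Omega_h)$ with $\|w_n\|_{H^1}=1$ and $\int_{\Omega_h}C_u\nabla w_n:\nabla w_n\,dz\to0$. By the Gårding inequality $\|\nabla w_n\|_{L^2}$ is bounded, so up to a subsequence $w_n\rightharpoonup w$ in $H^1(\Omega_h;\R^N)$ and $w_n\to w$ in $L^2(\Omega_h;\R^N)$, with $w\in\Vtilde(\Omega_h)$ since this space is closed. Fixing $\lambda>c_2$, the quadratic form $Q(w):=\int_{\Omega_h}C_u\nabla w:\nabla w\,dz+\lambda\|w\|_{L^2}^2$ is nonnegative (again by Gårding), hence convex and weakly lower semicontinuous, so $Q(w)\le\liminf_nQ(w_n)=\lambda\|w\|_{L^2}^2$, i.e. $\int_{\Omega_h}C_u\nabla w:\nabla w\,dz\le0$. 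If $w=0$, then $\|\nabla w_n\|_{L^2}\to0$ by Gårding and $\|w_n\|_{L^2}\to0$ by strong convergence, contradicting $\|w_n\|_{H^1}=1$; hence $w\ne0$, and (H3) then forces $\int_{\Omega_h}C_u\nabla w:\nabla w\,dz>0$, a contradiction. Thus \eqref{c0} holds with some $c_0>0$ depending only on $(h,u)$. The only genuinely delicate step is the Gårding inequality of the middle paragraph, namely transcribing the constant-coefficient half-space estimates of \cite{SS87} under the boundary conditions relevant here; everything else is either the elementary oscillation computation of the first paragraph or the routine weak-compactness argument just given.
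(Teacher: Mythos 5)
The paper gives no proof of this theorem: it simply asserts that one argues as in \cite[Theorem~1]{SS87}, and your proposal is a faithful reconstruction of exactly that argument (oscillating rank-one test fields for (H1), transplanted bounded exponential solutions for (H2), a G\aa rding inequality by freezing coefficients plus a weak-compactness upgrade using (H3)). The two steps you defer to \cite{SS87} --- the constant-coefficient half-space coercivity under the complementing condition and the necessity of (H2) --- are precisely the ones the paper also defers, so your proof is correct and takes essentially the same route.
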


We are now ready to explain
the construction announced at the beginning of this section.

\begin{proposition} \label{IFT}
Let $(h,u) \in X$ be such that $h\in C^2_\#(Q)$, $u\in C^2(\overline{\Omega}_h^\#;\R^N)$
is a critical point for the elastic energy in $\Omega_h$,
and condition \eqref{c0} holds.
Let $p\in(N,+\infty)$.
There exists a neighborhood $\mathcal{U}$ of $h$ in $W^{2,p}_\#(Q)$
and a map $g\in\mathcal{U} \mapsto u_g \in W^{2,p}(\Omega_g ; \mathbb{R}^N)$
such that:
\begin{itemize}
\item[(i)] $u_g$ is a critical point for the elastic energy in $\Omega_g$,
according to Definition~\ref{def:critpoint};
\item[(ii)] $u_h = u$;
\item[(iii)] the map $g \mapsto u_g \circ \Phi_g$ is of class $C^1$
from $W^{2,p}_{\#}(Q)$ to $W^{2,p} (\Omega_h ; \mathbb{R}^N)$.
\end{itemize}
\end{proposition}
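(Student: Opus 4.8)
The plan is to set up an abstract equation of the form $\mathcal{F}(g,v)=0$ whose solution in $v$, for fixed $g$, is precisely a critical point of the elastic energy in $\Omega_g$, and then to apply the Implicit Function Theorem in Banach spaces. The first difficulty is that the domain $\Omega_g$ varies with $g$, so the natural function spaces change; I would circumvent this by pulling everything back to the fixed domain $\Omega_h$ via the diffeomorphism $\Phi_g$ from Remark~\ref{rm:diffeo}. Concretely, for $g$ close to $h$ in $W^{2,p}_\#(Q)$ and for $v\in W^{2,p}(\Omega_h;\R^N)$ with $v-u\circ\Phi_g\in\Vtilde(\Omega_h)$ (or rather: with the appropriate boundary values at $\{y=0\}$ and periodicity), define $u_g$ by $u_g\circ\Phi_g=v$, so that $\nabla u_g = (\nabla v)(\nabla\Phi_g)^{-1}\circ\Phi_g^{-1}$, and rewrite the weak Euler--Lagrange equation \eqref{VolEul} in $\Omega_g$ as an equation in $v$ on $\Omega_h$ by changing variables. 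This produces a map
\[
\mathcal{F}: \mathcal{U}\times \mathcal{W} \longrightarrow Y,
\]
where $\mathcal{U}\subset W^{2,p}_\#(Q)$ is a neighborhood of $h$, $\mathcal{W}$ is an affine subspace of $W^{2,p}(\Omega_h;\R^N)$ encoding the boundary/periodicity conditions (so that its tangent space is $W^{2,p}\cap\Vtilde(\Omega_h)$), and $Y=L^p(\Omega_h;\R^N)\times W^{1-1/p,p}(\Gamma_h;\R^N)$ is the space of strong residuals (bulk equation $\div[W_\xi(\nabla u_g)]=0$ and Neumann condition $W_\xi(\nabla u_g)[\nu]=0$), the identification with the weak formulation being justified by elliptic regularity since $p>N$ makes $W^{2,p}$ an algebra and $\nabla u\in C^{0,\alpha}$. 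One checks $\mathcal{F}(h,u)=0$ by Definition~\ref{def:critpoint}, and that $\mathcal{F}$ is $C^1$ because $W\in C^3$, the change-of-variables coefficients depend smoothly (indeed analytically) on $\nabla\Phi_g$, and the Nemytskii operators involved are $C^1$ on these Hölder/Sobolev spaces; here (iii) is essentially built into the construction, since $g\mapsto u_g\circ\Phi_g=v$ is the very variable the IFT solves for.

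The crux is to verify that the partial differential $D_v\mathcal{F}(h,u): W^{2,p}\cap\Vtilde(\Omega_h)\to Y$ is an isomorphism. This linearization is exactly the elliptic system
\[
\begin{cases}
\div[C_u\nabla w]=0 & \text{in }\Omega_h^\#,\\
(C_u\nabla w)[\nu]=0 & \text{on }\Gamma_h^\#,\\
w(x,0)=0, \quad w\ \text{periodic},
\end{cases}
\]
and I would establish its bijectivity in two moves. Injectivity and a priori control: the bilinear form $w\mapsto \int_{\Omega_h}C_u\nabla w:\nabla w$ is, by hypothesis \eqref{c0}, coercive on $\Vtilde(\Omega_h)$, so by Lax--Milgram the weak problem has a unique solution in $\Vtilde(\Omega_h)$ for every right-hand side in the dual; in particular the kernel in $W^{2,p}\cap\Vtilde$ is trivial. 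Surjectivity: given data in $Y$, solve the weak problem in $\Vtilde(\Omega_h)$ via Lax--Milgram, then bootstrap to $W^{2,p}$ regularity using elliptic estimates for the strongly elliptic system with Neumann/Dirichlet mixed conditions --- here I invoke Theorem~\ref{teo:simpsonspector}, which tells us that \eqref{c0} forces the Legendre--Hadamard (strong ellipticity) condition (H1) and the complementing condition (H2), precisely the conditions of Agmon--Douglis--Nirenberg needed for $L^p$-regularity up to the boundary. Combined with the closed graph / open mapping theorem, $D_v\mathcal{F}(h,u)$ is then a Banach-space isomorphism.

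With these ingredients the Implicit Function Theorem yields a neighborhood $\mathcal{U}$ of $h$ in $W^{2,p}_\#(Q)$ and a $C^1$ map $g\mapsto v_g\in W^{2,p}(\Omega_h;\R^N)$ with $\mathcal{F}(g,v_g)=0$ and $v_h=u$; setting $u_g:=v_g\circ\Phi_g^{-1}\in W^{2,p}(\Omega_g;\R^N)$ gives (i) (it solves the weak Euler--Lagrange equation \eqref{VolEul} in $\Omega_g$, by undoing the change of variables), (ii) since $\Phi_h=Id$, and (iii) since $g\mapsto u_g\circ\Phi_g=v_g$ is exactly the map furnished by the IFT. I expect the main obstacle to be the bookkeeping in the regularity step: one must check that the transformed operator remains uniformly strongly elliptic and satisfies the complementing condition for $g$ near $h$ (a perturbation argument, using continuity of the coefficients and shrinking $\mathcal{U}$), and that the mixed boundary conditions --- Dirichlet at $\{y=0\}$, natural Neumann on $\Gamma_g$, periodic on the lateral faces --- fit into a clean $W^{2,p}$ elliptic estimate; the interior and $\Gamma_h$-boundary pieces are standard ADN theory, but the corner/compatibility issues at the junction $\Gamma_h\cap\{y=0\}$ are avoided only because $h$ is strictly positive, so the two boundary portions are disjoint and $\Omega_h$ is a smooth periodic domain. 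Everything else --- smoothness of $\mathcal{F}$, the Lax--Milgram step, the identification of weak and strong solutions --- is routine given $p>N$ and $W\in C^3$.
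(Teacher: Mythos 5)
Your proposal is correct and follows essentially the same route as the paper: pull the Euler--Lagrange system back to the fixed domain $\Omega_h$ via $\Phi_g$, view the (bulk residual, Neumann residual) pair as a $C^1$ map into $L^p(\Omega_h;\R^N)\times W^{1-\frac1p,p}(\Gamma_h;\R^N)$, identify the linearization at $(h,u)$ with the system $\div[C_u\nabla w]=f$, $(C_u\nabla w)[\nu]=\eta$, and invert it by combining the coercivity \eqref{c0} with the Agmon--Douglis--Nirenberg estimates, whose hypotheses (H1)--(H2) are supplied by Theorem~\ref{teo:simpsonspector}. The only cosmetic difference is that the paper parametrizes the equation by the diffeomorphism $\Phi$ itself (composing afterwards with the affine, hence smooth, map $g\mapsto\Phi_g$) and shifts the unknown by $u_0$ so as to work in a linear rather than affine space.
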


\begin{proof}
We start by observing that if $g\in W^{2,p}_{\#}(Q)$ is close to $h$ in the $W^{2,p}$-topology,
the maps $\Phi_g$ introduced in Remark~\ref{rm:diffeo}
are orientation preserving diffeomorphisms of class $W^{2,p}$ satisfying an estimate
\begin{equation}\label{diffeo}
\| \Phi_g - Id \|_{W^{2,p} (\Omega_h;\R^N)} \leq c \, \| g- h \|_{W^{2,p}(Q)}
\end{equation}
for some constant $c > 0$ depending only on $h$.
Moreover, by construction the map $g\mapsto\Phi_g$ is affine, and hence of class $C^\infty$
from a neighborhood of $h$ in $W^{2,p}_\#(Q)$ to $W^{2,p} (\Omega_h;\R^N)$.

Our aim is to associate, with every $g$ in a sufficiently small $W^{2,p}$-neighborhood of $h$, a solution $u_g$ to \eqref{VolEul} with $u_g-u_0\in\mathcal{V}(\Omega_g)$.
A change of variables shows that a function $v$ is a solution to \eqref{VolEul} with $v - u_0\in\mathcal{V}(\Omega_g)$
if and only if the function $\tilde{v}=v\circ\Phi_g - u_0$ belongs to $\mathcal{V}(\Omega_h)$ and solves
\begin{equation} \label{VolEul2}
\int_{\Omega_h} W_\xi \bigl( (\nabla\tilde{v}+\nabla{u_0}) (\nabla\Phi_g)^{-1} \bigr) (\nabla\Phi_g)^{-T}
: \nabla\tilde{w} \, \det\!\nabla\Phi_g \, dz = 0
\qquad \text{for every }\tilde{w}\in \mathcal{V} (\Omega_h).
\end{equation}
Note that an equivalent formulation of \eqref{VolEul2} is
$$
\begin{cases}
\div \left[ Q_{\Phi_g} ( z, \nabla \tilde{v}(z) ) \right] = 0 & \text{ in }\Omega^{\#}_h, \\
Q_{\Phi_g} ( z, \nabla \tilde{v}(z) ) [\nu] = 0 & \text{ on } \Gamma_h^{\#},
\end{cases}
$$
where we set, for $z\in\overline{\Omega}^{\#}_h$ and $M\in\M^{N}$,
\begin{equation} \label{cambiovar}
Q_{\Phi_g} ( z , M ) :=
\det\!\nabla\Phi_g(z) \, W_{\xi}\! \left( (M + \nabla u_0(z)) (\nabla\Phi_g(z))^{-1} \right) (\nabla\Phi_g(z))^{-T}.
\end{equation}
Our strategy will be to get a solution to this boundary value problem by means of the Implicit Function Theorem. To this aim, let us define the open subsets
\begin{align*}
A &:= \bigl\{\Phi\in W^{2,p}(\Omega_h;\R^N) :
\, \det\!\nabla\Phi>0 \text{ in }\Omega_h^{\#}, \, \nabla\Phi(x+e_i,y)=\nabla\Phi(x,y) \\
&\hspace{2cm} \text{ for } (x,y) \in \Omega^{\#}_h \text{ and } i=1,\ldots,N-1 \bigr\} \,,\\
B &:= \{v\in\mathcal{V}(\Omega_h)\cap W^{2,p}(\Omega_h;\R^N): \det(\nabla v + \nabla u_0)>0 \text{ in }\Omega_h\} \,,
\end{align*}
both equipped with the norm $\|\cdot\|_{W^{2,p}(\Omega_h;\R^N)}$
(notice that the pointwise conditions on the determinants in the definition of the spaces $A$ and $B$
make sense thank to the embedding of $W^{2,p}$ in $C^{1,\alpha}$).
Observing that, for $(\Phi,v)\in A\times B$, the map $z\mapsto Q_\Phi(z,\nabla v(z))$
is of class $W^{1,p}$ in $\Omega_h$
(here $Q_\Phi$ is defined as in \eqref{cambiovar} with $\Phi_g$ replaced by $\Phi$),
we introduce the spaces
\begin{align*}
Y_1 &:= \bigl\{ f\in L^p(\Omega_h; \R^N): f(x+e_i,y)=f(x,y)
\text{ for a.e. } (x,y) \in \Omega^{\#}_h \text{ and } i=1,\ldots,N-1 \bigr\},\\
Y_2 &:= \bigl\{ \eta\in W^{1-\frac1p,p}(\Gamma_h;\R^N): \eta(x+e_i,h(x+e_i))=\eta(x,h(x)) \text{ for a.e. } x\in\R^{N-1} \bigr\},
\end{align*}
and the map $G : A \times B \to Y_1\times Y_2$ defined as
$$
G(\Phi,v):= \Bigl(\div \bigl[ Q_{\Phi} ( \cdot, \nabla v(\cdot) ) \bigr], Q_{\Phi} ( \cdot, \nabla v(\cdot) ) [\nu]\Bigr).
$$
It can be checked that $G$ is a map of class $C^1$, and $G(Id, u-u_0)=(0,0)$ (as $u$ solves \eqref{VolEul}). In order to apply the Implicit Function Theorem, we need to verify that the partial derivative $\partial_v G (Id,u-u_0)$ is an invertible bounded linear operator.
Since for every $v \in \mathcal{V}(\Omega_h) \cap W^{2,p}(\Omega_h;\R^N)$
\begin{align*}
\partial_v G(Id,u-u_0)[v] &= \Bigl( \div \left[W_{\xi\xi}(\nabla u)\nabla v\right] , \bigl(W_{\xi\xi}(\nabla u)\nabla v\bigr) [\nu] \Bigr)\\
&= \bigl( \div \left[C_u\nabla v\right], (C_u\nabla v) [\nu] \bigr),
\end{align*}
the invertibility of the operator $\partial_v G(Id,u-u_0)$ corresponds to prove existence and uniqueness in $\mathcal{V}(\Omega_h) \cap W^{2,p}(\Omega_h;\R^N)$ of solutions to the problem
\begin{equation*}
\begin{cases}
\div \left[ C_u\nabla v \right] = f & \text{ in }\Omega_h^\#, \\
(C_u\nabla v) [\nu] = \eta & \text{ on } \Gamma_h^\#,
\end{cases}
\end{equation*}
for any given $(f,\eta)\in Y_1 \times Y_2$.
The proof of this fact relies on the regularity theory for elliptic systems with mixed boundary conditions,
and in particular on the regularity estimates of Agmon, Douglis and Nirenberg (see \cite[Theorem~10.5]{ADN}),
which can be applied thank to the assumption \eqref{c0},
which is equivalent to the three conditions (H1)--(H3) by Theorem~\ref{teo:simpsonspector},
and to the regularity of $h$ and $u$
(we refer also to \cite{Thom} for a clear presentation of the theory in the context of linear elasticity).

We are now in position to apply the Implicit Function Theorem:
there exist a neighborhood $\mathcal{V}$ of $Id$ in $A$, a neighborhood $\mathcal{W}$ of $u-u_0$ in $B$ and a map
$$
\Phi\in\mathcal{V}\longmapsto u_\Phi\in\mathcal{W}
$$
of class $C^1$ such that $u_{Id}=u-u_0$ and $G(\Phi,u_\Phi)=(0,0)$ for all $\Phi\in\mathcal{V}$.
Finally, thank to \eqref{diffeo}, we can determine a neighborhood $\mathcal{U}$ of $h$ in $W^{2,p}_{\#} (Q)$ such that if $g\in\mathcal{U}$ then $\Phi_g\in\mathcal{V}$. Setting $u_g:=(u_{\Phi_g}+u_0)\circ\Phi_g^{-1}$ for any $g\in\mathcal{U}$, we obtain the conclusion of the proposition.
\end{proof}

\begin{remark} \label{rm:IFT}
From the proof of the previous proposition it follows in particular that there exists a compact set $K\subset\M^N_+$ such that
$$
\nabla u_g(z)\in K
\qquad\text{for every }g\in\mathcal{U}\text{ and } z\in\overline{\Omega}_g.
$$
\end{remark}

We conclude this section by showing that the critical points $u_g$ constructed in Proposition~\ref{IFT}
are also local minimizers of the elastic energy, in the sense of Definition~\ref{def:minEl}.

\begin{proposition} \label{prop:minEl}
Let $\mathcal{U}$ be as in Proposition~\ref{IFT}.
There exist $\delta > 0$ and $\varepsilon > 0$ such that, if $g \in \mathcal{U}$ and
$\| g - h \|_{W^{2,p} (Q)} < \e,$
then $u_g$ is a strict $\delta$-local minimizer for the elastic energy in $\Omega_g$,
according to Definition ~\ref{def:minEl}.
\end{proposition}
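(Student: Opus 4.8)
The plan is to transfer the uniform positivity of the elastic second variation from the pair $(h,u)$ to the pairs $(g,u_g)$, and then to close the argument by a second order Taylor expansion of $W$ around $\nabla u_g$, absorbing the error term into the coercivity thus gained.

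\textbf{Step 1: uniform positivity of the second variation at $u_g$.}
I claim that, after possibly shrinking $\mathcal{U}$, one has
\begin{equation*}
\int_{\Omega_g} C_{u_g}\nabla w : \nabla w \, dz \;\geq\; \frac{c_0}{2}\,\|\nabla w\|_{L^2(\Omega_g;\M^{N})}^2
\qquad\text{for every }w\in\Vtilde(\Omega_g).
\end{equation*}
To prove it I would use the diffeomorphisms $\Phi_g$ of Remark~\ref{rm:diffeo}: writing $w=\tilde w\circ\Phi_g^{-1}$ with $\tilde w\in\Vtilde(\Omega_h)$, the change of variables $z=\Phi_g(\zeta)$ turns the left-hand side into $\int_{\Omega_h}\mathcal{A}_g(\zeta)[\nabla\tilde w,\nabla\tilde w]\,d\zeta$, where $\mathcal{A}_g$ is a fourth order tensor field built from $\nabla\Phi_g$, $\det\nabla\Phi_g$ and $C_{u_g}\circ\Phi_g=W_{\xi\xi}\bigl(\nabla(u_g\circ\Phi_g)(\nabla\Phi_g)^{-1}\bigr)$. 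Since $\Phi_h=Id$ and $u_h=u$, we have $\mathcal{A}_h=C_u$; and by \eqref{diffeo} and Proposition~\ref{IFT}(iii) both $\Phi_g\to Id$ and $u_g\circ\Phi_g\to u$ in $W^{2,p}(\Omega_h;\R^N)$ as $g\to h$, hence — because $p>N$ so that $W^{2,p}\hookrightarrow C^{1,\alpha}$ — in $C^1(\overline{\Omega}_h;\R^N)$. Using the continuity of $W_{\xi\xi}$ together with Remark~\ref{rm:IFT} (the gradients $\nabla u_g$ all lie in a fixed compact set $K\subset\M^{N}_+$), it follows that $\mathcal{A}_g\to C_u$ uniformly on $\overline{\Omega}_h$, so that $\int_{\Omega_h}\mathcal{A}_g[\nabla\tilde w,\nabla\tilde w]$ differs from $\int_{\Omega_h}C_u\nabla\tilde w:\nabla\tilde w$ by at most $\omega(\|g-h\|_{W^{2,p}})\|\nabla\tilde w\|_{L^2(\Omega_h)}^2$ with $\omega(s)\to0$. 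Since moreover the change of variables distorts the $L^2$ norm of the gradient by a factor arbitrarily close to $1$, the claim follows from \eqref{c0} (recalling $\|\cdot\|_{H^1}\geq\|\nabla\cdot\|_{L^2}$).

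\textbf{Step 2: Taylor expansion and absorption.}
Let $K'\subset\M^{N}_+$ be a compact neighborhood of the set $K$ of Remark~\ref{rm:IFT}, pick $\delta_0>0$ such that $\xi+M\in K'$ whenever $\xi\in K$ and $|M|\leq\delta_0$, and let $\omega_1$ be the modulus of continuity of $W_{\xi\xi}$ on $K'$. Fix $g\in\mathcal{U}$ with $\|g-h\|_{W^{2,p}(Q)}$ small enough for Step~1 to apply, and take $w\in\mathcal{V}(\Omega_g)$ with $0<\|\nabla w\|_{L^\infty(\Omega_g)}\leq\delta\leq\delta_0$. Then $\nabla u_g(z)+t\nabla w(z)\in K'$ for a.e.\ $z$ and all $t\in[0,1]$, and Taylor's formula with integral remainder gives
\begin{equation*}
\int_{\Omega_g}\!\bigl[W(\nabla u_g+\nabla w)-W(\nabla u_g)\bigr]dz
= \int_{\Omega_g}\!W_\xi(\nabla u_g):\nabla w\,dz
+ \int_{\Omega_g}\!\!\int_0^1(1-t)\,W_{\xi\xi}(\nabla u_g+t\nabla w)[\nabla w,\nabla w]\,dt\,dz.
\end{equation*}
The first integral vanishes because $u_g$ is a critical point for the elastic energy in $\Omega_g$ (Proposition~\ref{IFT}(i)) and $w\in\mathcal{V}(\Omega_g)$. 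Writing $W_{\xi\xi}(\nabla u_g+t\nabla w)=C_{u_g}+\bigl(W_{\xi\xi}(\nabla u_g+t\nabla w)-C_{u_g}\bigr)$, using $\int_0^1(1-t)\,dt=\tfrac12$ and Step~1 for the main part, while estimating the remaining part by $\tfrac12\omega_1(\delta)\|\nabla w\|_{L^2(\Omega_g)}^2$, we obtain that the right-hand side is bounded below by $\bigl(\tfrac{c_0}{4}-\tfrac12\omega_1(\delta)\bigr)\|\nabla w\|_{L^2(\Omega_g)}^2$. Choosing $\delta$ so small that $\omega_1(\delta)<\tfrac{c_0}{4}$ makes this $\geq\tfrac{c_0}{8}\|\nabla w\|_{L^2(\Omega_g)}^2>0$, which is precisely the strict $\delta$-local minimality of $u_g$; since $K$, $\omega_1$ and the constant of Step~1 are independent of $g$, a single pair $(\delta,\varepsilon)$ works for all $g\in\mathcal{U}$ with $\|g-h\|_{W^{2,p}(Q)}<\varepsilon$.

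\textbf{Main obstacle.}
The delicate point is Step~1: one must ensure that the coercivity constant of the second variation at $u_g$ in $\Omega_g$ can be taken independent of $g$. This is exactly where the $C^1$-regularity of $g\mapsto u_g\circ\Phi_g$ (Proposition~\ref{IFT}(iii)), the uniform inclusion $\nabla u_g\in K$ (Remark~\ref{rm:IFT}) and the embedding $W^{2,p}\hookrightarrow C^{1,\alpha}$ for $p>N$ are all used; once the second variation is uniformly coercive, the remainder estimate in Step~2 is routine.
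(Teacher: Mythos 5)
Your proposal is correct and follows essentially the same two-step strategy as the paper: first transfer the uniform coercivity \eqref{c0} to $C_{u_g}$ on $\Omega_g$ via the diffeomorphisms $\Phi_g$ and the $C^1$-regularity of $g\mapsto u_g\circ\Phi_g$ (the paper states this as \eqref{provv} without detail), then perform a second order Taylor expansion around $\nabla u_g$ and absorb the $W_{\xi\xi}$-error using the modulus of continuity on the compact set $K$. The only differences are cosmetic (integral versus Lagrange form of the remainder, and coercivity measured in $\|\nabla w\|_{L^2}$ rather than $\|w\|_{H^1}$).
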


\begin{proof}
We start by observing that, if $g \in \mathcal{U}$ and
$\| g - h \|_{W^{2,p}(Q)} < \e$,
then from \eqref{c0} and from the smoothness of the map $g \mapsto u_g\circ\Phi_g$
one can easily deduce that
\begin{equation} \label{provv}
\int_{\Omega_g} C_{u_g} \nabla w \! : \! \nabla w \, dz > \frac{c_0}{4} \| w \|^2_{H^1 (\Omega_g; \R^N)}
\end{equation}
for every $w \in \mathcal{V} (\Omega_g)$, provided  $\e>0$ is small enough.

Let now $w \in \mathcal{V}(\Omega_g)$ satisfy
$0 < \| \nabla w \|_{L^{\infty} ( \Omega_g ; \M^{N})} \leq \delta$,
with $\delta > 0$ to be chosen.
We set
$$
f(t):= \int_{\Omega_g} W (\nabla u_g + t \nabla w) \, dz, \quad t \in [0,1].
$$
Notice that, since $u_g$ is a critical point, $f'(0) = 0$.
Hence, there exists $\tau \in (0,1)$ such that
\begin{align}
\int_{\Omega_g} W (\nabla u_g + \nabla w) \, dz
&= f(1) = f (0) +  \frac{f'' (\tau)}{2} \nonumber \\
&= \int_{\Omega_g} W (\nabla u_g) + \frac12 \int_{\Omega_g} C_{u_g} [ \nabla w , \nabla w ] \, dz \nonumber \\
&\hspace{0.5cm}+ \frac12 \int_{\Omega_g} \left( W_{\xi \xi} ( \nabla u_g + \tau \nabla w )
- W_{\xi \xi} ( \nabla u_g) \right) [ \nabla w , \nabla w ] \, dz \nonumber \\
&\geq \int_{\Omega_g} W (\nabla u_g) + \Bigl( \frac{c_0}{8} - \omega (\delta) \Bigr) \| w  \|^2_{H^1 (\Omega_g;\R^N)}, \label{ugmin}
\end{align}
where we used \eqref{provv} and we set
$$
\omega(\delta):=
\max \Bigl\{  \| W_{\xi \xi} ( A + \tau B ) - W_{\xi \xi} ( A )   \|_{\infty} : \,
A \in K , \,
B \in \mathbb{M}^{N}, \,
|B| \leq \delta, \, 0\leq\tau\leq1 \Bigr\},
$$
with $K$ as in Remark~\ref{rm:IFT}.
Note that $\omega (\delta) \to 0$ as $\delta \to 0^+$.
Therefore, choosing $\delta$ so small that $\omega (\delta) < \frac{c_0}{8}$
it follows from \eqref{ugmin} that $u_g$ is a strict $\delta$-local minimizer.
\end{proof}

\end{section}

%%%%%%%%%%%%%%%%%%%%%%%%%%%%%%%%%%%%%%%%%%%%%%%%%%%%%%%%%%%%%%%%%%%%%%%%%%%%%%%%%%%%%%%%%%%%%%%%%%%%%%%%%%%%%%%%%%%%%%%%%%%%%%%%%%%%%%%%
%%%%%%%%%%%%%%%%%%%%%%%%%%%%%%%%%%%%%%%%%%%%%%%%%%%%%%%%%%%%%%%%%%%%%%%%%%%%%%%%%%%%%%%%%%%%%%%%%%%%%%%%%%%%%%%%%%%%%%%%%%%%%%%%%%%%%%%%
%%%%%%%%%%%%%%%%%%%%%%%%%%%%%%%%%%%%%%%%%%%%%%%%%%%%%%%%%%%%%%%%%%%%%%%%%%%%%%%%%%%%%%%%%%%%%%%%%%%%%%%%%%%%%%%%%%%%%%%%%%%%%%%%%%%%%%%%
%%%%%%%%%%%%%%%%%%%%%%%%%%%%%%%%%%%%%%%%%%%%%%%%%%%%%%%%%%%%%%%%%%%%%%%%%%%%%%%%%%%%%%%%%%%%%%%%%%%%%%%%%%%%%%%%%%%%%%%%%%%%%%%%%%%%%%%%
%%%%%%%%%%%%%%%%%%%%%%%%%%%%%%%%%%%%%%%%%%%%%%%%%%%%%%%%%%%%%%%%%%%%%%%%%%%%%%%%%%%%%%%%%%%%%%%%%%%%%%%%%%%%%%%%%%%%%%%%%%%%%%%%%%%%%%%%

\begin{section}{The second variation} \label{sect:varII}

The main result of this section is the explicit computation of the second variation of the functional $F$
along volume-preserving deformations.
Here and in the following we assume that $(h,u)\in X$ satisfies the assumptions of Proposition~\ref{IFT}:
$h\in C^2_\#(Q)$, $u\in C^2(\overline{\Omega}_h^\#;\R^N)$ is a critical point for the elastic energy in $\Omega_h$,
and condition \eqref{c0} holds.

Given $\phi \in C^{2}_\# (Q)$ with $\int_Q \phi \, dx = 0$, for $t \in \R$
we set $h_t:= h + t \phi$.
According to Proposition~\ref{IFT}, for $t$ so small that $h_t \in \mathcal{U}$
we may consider a critical point $u_{h_t}$ for the elastic energy in $\Omega_{h_t}$.
To simplify the notation, we set $u_t :=u_{h_t}$.
We define the \textit{second variation of $F$ at $(h,u)$ along the direction $\phi$} to be the value of
$$
\frac{d^2}{dt^2} \left[ F (h_t,u_t)  \right] |_{t=0}.
$$
We remark that the existence of the derivative is guaranteed by the regularity result contained in Proposition~\ref{IFT}
(see the first step of the proof of Theorem~\ref{th:var2}).

Before stating the main results of this section, we introduce some more notation.
For any one-parameter family of functions $\{g_t\}_{t\in\R}$
we denote by $\dot g_t(z)$ the partial derivative with respect to $t$
of the function $(t,z)\mapsto g_t(z)$. We omit the subscript when $t=0$.
In particular we let
$$
\dot u_t:=\frac{\partial u_t}{\partial t},\,\qquad \dot u:=\frac{\partial u_t}{\partial t} \Big|_{t=0}\,.
$$
We introduce also the following subspace of $H^1(\Gamma_h)$:
\begin{align*}
{\widetilde H}^1_\#(\Gamma_h) &:=\Bigl\{\vartheta \in H_{loc}^1(\Gamma^{\#}_h):\,
\vartheta(x + e_i,h(x + e_i))=\vartheta(x,h(x)) \text{ for a.e. }x \in \R^{N-1} \\
&\hspace{1cm} \text{ and for every }i=1,\ldots,N-1 ,\,
\int_{\Gamma_h}\vartheta\,d\mathcal{H}^{N-1}=0\Bigr\}\,,
\end{align*}
and we define $\vphi \in {\widetilde H}^1_\#(\Gamma_h)$ to be
$$
\vphi:=\frac{\phi}{\sqrt{1+ |\nabla h|^2}} \circ~\pi,
$$
where $\pi :\R^N \to \R^{N-1}$ is the orthogonal projection on the hyperplane spanned by $\{e_1,\ldots,e_{N-1}\}$.
Denote also by $\nu_t$ the outer unit normal vector to $\Omega_{h_t}$ on $\Gamma_{h_t}$, and by $H^\psi_t:=\div\left(\nabla\psi\circ\nu_t\right)$ the anisotropic curvature of $\Gamma_{h_t}$.
It will be convenient to consider, as we did before,
a family of diffeomorphisms $\Phi_t:\overline{\Omega}_h\to\overline{\Omega}_{h_t}$ of class $C^{2}$
such that $\Phi_0=Id$ and $\Phi_t(x,y)=(x,y+t\phi(x))$ in a neighborhood of $\Gamma_h$
(see Remark~\ref{rm:diffeo}).

In the following theorem we deduce an explicit expression of the second variation.

\begin{theorem}\label{th:var2}
Let $(h,u)$, $\phi$, $\vphi$ and $(h_t, u_t)$ be as above. Then
the function $\dot{u}$ belongs to $\mathcal{V}(\Omega_h)$ and satisfies the equation
\begin{align}
\int_{\Omega_h} C_u \nabla \dot{u} : \nabla w \, dz
=\int_{\Gamma_h} \div_{\Gamma_h} (\vphi\, W_{\xi} (\nabla u)) \cdot w
\, d\hn \qquad\text{for all } w\in \Vtilde(\Omega_h). \label{upto-solves}
\end{align}
Moreover, the second variation of $F$ at $(h, u)$ along the direction $\phi$ is given by
\begin{align} \label{fvar2}
\frac{d^2}{dt^2}F&(h_t, u_{t} )|_{t=0}
= - \int_{\Omega_h} C_u \nabla \dot{u} : \nabla \dot{u} \, dz
+ \int_{\Gamma_h} ( \nabla^2 \psi \circ \nu ) [ \nabla _{\Gamma_h} \varphi, \nabla _{\Gamma_h} \varphi ] \, d\hn \nonumber\\
&+ \int_{\Gamma_h} \bigl( \partial_\nu (  W \circ \nabla u ) - \tr(\Bpsi\B) \bigr) \, \vphi^2\,  d\hn \\
&- \int_{\Gamma_h}\bigl( W \circ \nabla u +H^{\psi} \bigr) \,
\div_{\Gamma_h} \left[ \left( \frac{(\nabla h, |\nabla h|^2 )} {\sqrt{1 + |\nabla h|^2}} \circ \pi \right) \vphi^2\right] \, d \hn, \nonumber
\end{align}
where $H^\psi$, $\B$ and $\B^\psi$ are the anisotropic mean curvature, the second fundamental form and the anisotropic second fundamental form of $\Gamma_h$, respectively.
\end{theorem}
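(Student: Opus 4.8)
The plan is to differentiate twice the real-valued map $g(t):=F(h_t,u_t)$ at $t=0$, making systematic use of the fact that $u_t$ is a critical point of the elastic energy in $\Omega_{h_t}$ for every small $t$.

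\emph{Step 1: differentiability and the equation for $\dot u$.} Since $t\mapsto h_t$ is affine into $W^{2,p}_\#(Q)$ and, by Remark~\ref{rm:diffeo}, $t\mapsto\Phi_t$ is affine into $W^{2,p}(\Omega_h;\R^N)$, Proposition~\ref{IFT}(iii) gives that $t\mapsto v_t:=u_t\circ\Phi_t$ is of class $C^1$ from a neighbourhood of $0$ into $W^{2,p}(\Omega_h;\R^N)\hookrightarrow C^{1,\alpha}(\overline\Omega_h;\R^N)$; in particular $g$ is twice differentiable at $0$. As $v_t-u_0\in\mathcal{V}(\Omega_h)$ for all $t$, the derivative $\dot v:=\partial_t v_t|_{t=0}$ lies in $\mathcal{V}(\Omega_h)$, and differentiating $v_t=u_t\circ\Phi_t$ gives $\dot u=\dot v-\nabla u\,\dot\Phi$ (which, where the domains $\Omega_{h_t}$ overlap, coincides with $\partial_t u_t|_{t=0}$); since $\dot v$ and $\dot\Phi$ vanish on $\{y=0\}$ and are periodic, $\dot u\in\mathcal{V}(\Omega_h)$. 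To prove \eqref{upto-solves} I differentiate at $t=0$ the Euler--Lagrange equation for $u_t$ written, after the change of variables $\Phi_t$, as \eqref{VolEul2}. Using $\partial_t(\nabla\Phi_t)^{-1}|_0=-\nabla\dot\Phi$, $\partial_t\det\nabla\Phi_t|_0=\div\dot\Phi$, $W_{\xi\xi}(\nabla u)=C_u$, the identity $\nabla\dot v=\nabla\dot u+(\dot\Phi\cdot\nabla)\nabla u+\nabla u\,\nabla\dot\Phi$, and $C_u\bigl((\dot\Phi\cdot\nabla)\nabla u\bigr)=(\dot\Phi\cdot\nabla)W_\xi(\nabla u)$, a rearrangement (in which the term $C_u(\nabla u\,\nabla\dot\Phi)$ cancels) expresses $\int_{\Omega_h}C_u\nabla\dot u:\nabla w\,dz$ as a bulk integral which, after one integration by parts using $\div W_\xi(\nabla u)=0$ in $\Omega_h$, $W_\xi(\nabla u)[\nu]=0$ on $\Gamma_h$, $w=0$ on $\{y=0\}$ and periodicity, collapses to a surface integral on $\Gamma_h$; since $\dot\Phi=(0,\phi)$ near $\Gamma_h$ and $(0,\phi)\cdot\nu=\vphi$ there, this integral is exactly $\int_{\Gamma_h}\div_{\Gamma_h}(\vphi\,W_\xi(\nabla u))\cdot w\,d\hn$, and the identity extends to all $w\in\Vtilde(\Omega_h)$ by density. (Equivalently, one may differentiate the Euler--Lagrange equation directly on the moving domains via the transport theorem, obtaining $\int_{\Omega_h}C_u\nabla\dot u:\nabla w\,dz=-\int_{\Gamma_h}\vphi\,(W_\xi(\nabla u):\nabla w)\,d\hn$, and then identify the two right-hand sides through the tangential divergence formula and $W_\xi(\nabla u)[\nu]=0$.)

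\emph{Step 2: the first derivative.} For each $t$ with $h_t\in\mathcal{U}$, differentiating $F$ part by part (after a change of variables for the bulk term) under the integral sign and discarding, by the criticality of $u_t$, the terms carrying $\dot u_t$, together with the transport term carrying the boundary velocity $(0,\phi)$ on $\Gamma_{h_t}$, gives $\frac{d}{dt}\int_{\Omega_{h_t}}W(\nabla u_t)\,dz=\int_Q W(\nabla u_t(x,h_t(x)))\,\phi(x)\,dx$; by the $1$-homogeneity of $\psi$ and formula \eqref{Hpsi}, $\frac{d}{dt}\int_{\Gamma_{h_t}}\psi(\nu_t)\,d\hn=\frac{d}{dt}\int_Q\psi(-\nabla h_t,1)\,dx=\int_Q H^\psi_t(x,h_t(x))\,\phi(x)\,dx$. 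Hence $g'(t)=\int_Q\bigl(W(\nabla u_t)+H^\psi_t\bigr)(x,h_t(x))\,\phi(x)\,dx$, which vanishes at $t=0$ by \eqref{critpair} since $\int_Q\phi\,dx=0$.

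\emph{Step 3: the second derivative.} Differentiating the last display at $t=0$ splits $g''(0)$ into a bulk and a surface contribution. For the bulk one, $(x,h_t(x))=\Phi_t(x,h(x))$ near $\Gamma_h$ and the chain rule give $\partial_t[\nabla u_t(x,h_t(x))]|_{t=0}=\nabla\dot u+\phi\,\partial_y(\nabla u)$ on $\Gamma_h$, whence $\frac{d}{dt}|_0\int_Q W(\nabla u_t)(x,h_t)\,\phi\,dx=\int_{\Gamma_h}(W_\xi(\nabla u):\nabla\dot u)\,\vphi\,d\hn+\int_{\Gamma_h}\sqrt{1+|\nabla h|^2}\,\partial_y(W\circ\nabla u)\,\vphi^2\,d\hn$. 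Testing \eqref{upto-solves} with $w=\dot u$ and using $W_\xi(\nabla u)[\nu]=0$ and the tangential divergence formula converts the first integral into $-\int_{\Omega_h}C_u\nabla\dot u:\nabla\dot u\,dz$; writing on $\Gamma_h$ the decomposition $\sqrt{1+|\nabla h|^2}\,e_N=\nu+\tfrac{(\nabla h,|\nabla h|^2)}{\sqrt{1+|\nabla h|^2}}$, where the last vector is tangent to $\Gamma_h$, and integrating by parts on $\Gamma_h$ converts the second integral into $\int_{\Gamma_h}\partial_\nu(W\circ\nabla u)\,\vphi^2\,d\hn-\int_{\Gamma_h}(W\circ\nabla u)\,\div_{\Gamma_h}\bigl[\bigl(\tfrac{(\nabla h,|\nabla h|^2)}{\sqrt{1+|\nabla h|^2}}\circ\pi\bigr)\vphi^2\bigr]\,d\hn$. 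For the surface contribution, $\frac{d}{dt}|_0\int_Q H^\psi_t(x,h_t)\,\phi\,dx=\frac{d^2}{dt^2}|_0\int_Q\psi(-\nabla h_t,1)\,dx=\int_Q\sum_{i,j=1}^{N-1}\tfrac{\partial^2\psi}{\partial z_i\partial z_j}(-\nabla h,1)\,\partial_i\phi\,\partial_j\phi\,dx$, and the known expression for the second variation of the anisotropic area of a graph (cf.\ \cite{Bon}) rewrites this as $\int_{\Gamma_h}(\nabla^2\psi\circ\nu)[\nabla_{\Gamma_h}\vphi,\nabla_{\Gamma_h}\vphi]\,d\hn-\int_{\Gamma_h}\tr(\Bpsi\B)\,\vphi^2\,d\hn-\int_{\Gamma_h}H^\psi\,\div_{\Gamma_h}\bigl[\bigl(\tfrac{(\nabla h,|\nabla h|^2)}{\sqrt{1+|\nabla h|^2}}\circ\pi\bigr)\vphi^2\bigr]\,d\hn$. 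Summing the two contributions yields \eqref{fvar2}.

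\emph{Main obstacle.} The delicate part is the bookkeeping in Steps 1 and 3: one must keep track of all the Jacobian and change-of-variables factors and perform several integrations by parts on the curved hypersurface $\Gamma_h$ in order to convert the quantities naturally attached to the vertical parametrisation $h_t=h+t\phi$ into the intrinsic objects $\B$, $\Bpsi$, $H^\psi$ and $\partial_\nu$; the residual discrepancy between the coordinate direction $e_N$ and the unit normal $\nu$ is precisely what produces the last line of \eqref{fvar2}. A more technical point is the rigorous justification that $\dot u\in\mathcal{V}(\Omega_h)$ (so that it is an admissible test function in \eqref{upto-solves}) and that the various differentiations under the integral sign and on the moving domains are legitimate — all of which rest on the $C^1$-regularity provided by Proposition~\ref{IFT}.
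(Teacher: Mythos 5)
Your proposal is correct and follows essentially the same route as the paper: the first variation is reduced to $\int_Q \phi\,\bigl(W(\nabla u_t)+H^\psi_t\bigr)|_{(x,h_t(x))}\,dx$, the elastic contribution to the second derivative is converted into $-\int_{\Omega_h}C_u\nabla\dot u:\nabla\dot u\,dz$ by testing \eqref{upto-solves} with $w=\dot u$, and the decomposition of $\sqrt{1+|\nabla h|^2}\,e_N$ into $\nu$ plus a tangential field is exactly what produces the last line of \eqref{fvar2}. The only cosmetic differences are that you derive \eqref{upto-solves} primarily from the pulled-back equation \eqref{VolEul2} rather than (as the paper does, and as you note is equivalent) by differentiating \eqref{VolEult} on the moving domains, and that you quote the second variation of the anisotropic area of a graph where the paper derives it from the identities $\dot\nu=-\nabla_{\Gamma_h}\vphi$, $\dot H^\psi=-\div_{\Gamma_h}\bigl((\nabla^2\psi\circ\nu)[\nabla_{\Gamma_h}\vphi]\bigr)$ and $\partial_\nu H^\psi=-\tr(\Bpsi\B)$ of Lemma~\ref{lem:identita}.
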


Before proving the theorem, we collect in the following lemma
some identities that will be used in the computation of the second variation.

\begin{lemma} \label{lem:identita}
The following identities are satisfied on $\Gamma_h$:
\begin{itemize}
\item[(a)] $\partial_{\nu} H^{\psi} = - \tr \left( \Bpsi\B \right) = -\tr \left( \B^2(\nabla^2\psi\circ\nu) \right)$;
\smallskip
\item[(b)] $\dot{\nu} = - \nabla_{\Gamma_h} \varphi$;
\smallskip
\item[(c)] $\dot{H}^{\psi} = \div_{\Gamma_h} \left( (\nabla^2\psi\circ\nu)[\dot{\nu}] \right) = - \div_{\Gamma_h} \left( ( \nabla^2 \psi \circ \nu ) [ \nabla_{\Gamma_h} \varphi ] \right)$.
\end{itemize}
\end{lemma}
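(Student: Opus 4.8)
\emph{Strategy.} All three identities are elementary consequences of the differential geometry of the moving graphs $\Gamma_{h_t}$, combined with the $1$-homogeneity of $\psi$; they are the anisotropic, higher-dimensional analogues of identities already exploited in \cite{FM,Bon}. The plan is to work with the extensions of all the geometric quantities to a fixed tubular neighbourhood $\mathcal U$ of $\Gamma_h$: for $t$ small I let $d_t$ be the signed distance from $\Gamma_{h_t}$ (positive in the complement of $\Omega_{h_t}$), so that $\nu_t=\nabla d_t$ and, at $t=0$, $\B=\nabla^2 d_0=\nabla\nu$ throughout $\mathcal U$; every dot is then read as the ordinary $t$-derivative at $t=0$ of such a fixed-point extension. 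I will use repeatedly three structural facts: $\partial_\nu\nu=\nabla^2 d_0[\nabla d_0]=0$; the Riccati identity $\partial_\nu\B=-\B^2$ in $\mathcal U$; and $\nabla^2\psi(v)[v]=0$, that is \eqref{hesspsi}.

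\emph{Proof of (a).} Since $\Bpsi=\nabla(\nabla\psi\circ\nu)=(\nabla^2\psi\circ\nu)\,\nabla\nu=(\nabla^2\psi\circ\nu)\,\B$, I write $H^\psi=\tr\Bpsi=\tr\!\big((\nabla^2\psi\circ\nu)\,\B\big)$ in $\mathcal U$ and differentiate along $\nu$. The contribution of $\partial_\nu(\nabla^2\psi\circ\nu)=(\nabla^3\psi\circ\nu)[\partial_\nu\nu]$ vanishes because $\partial_\nu\nu=0$, so $\partial_\nu H^\psi=\tr\!\big((\nabla^2\psi\circ\nu)\,\partial_\nu\B\big)=-\tr\!\big((\nabla^2\psi\circ\nu)\,\B^2\big)$ by the Riccati identity; since $\Bpsi\B=(\nabla^2\psi\circ\nu)\,\B^2$, cyclicity of the trace yields the two asserted forms $-\tr(\Bpsi\B)=-\tr\!\big(\B^2(\nabla^2\psi\circ\nu)\big)$. (This is the natural anisotropic counterpart of the classical identity $\partial_\nu H=-|\B|^2$.)

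\emph{Proof of (b) and (c).} For (b), $\nu_t=\nabla d_t$ gives $\dot\nu=\nabla\dot d_0$, and it only remains to identify $\dot d_0$ on $\Gamma_h$. The flow $\Phi_t$ carries $(x,h(x))\in\Gamma_h$ to $(x,h(x)+t\phi(x))\in\Gamma_{h_t}$, a displacement whose component along $\nu$ equals $t\,\phi(x)/\sqrt{1+|\nabla h(x)|^2}=t\,\varphi$; hence $\Gamma_{h_t}$ lies at signed normal distance $-t\varphi+o(t)$ from $\Gamma_h$, i.e.\ $\dot d_0=-\varphi$ on $\Gamma_h$. Differentiating $|\nabla d_t|^2\equiv1$ shows that $\dot\nu=\nabla\dot d_0$ is orthogonal to $\nu$, so on $\Gamma_h$ it equals the tangential gradient of its restriction, namely $\nabla_{\Gamma_h}(\dot d_0|_{\Gamma_h})=-\nabla_{\Gamma_h}\varphi$. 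For (c), I differentiate at $t=0$ the identity $H^\psi_t=\div(\nabla\psi\circ\nu_t)$, which holds throughout $\mathcal U$; since $\div$ and $\partial_t$ are both derivatives at a fixed point they commute, and (b) gives $\dot H^\psi=\div\!\big((\nabla^2\psi\circ\nu)[\dot\nu]\big)=-\div\!\big((\nabla^2\psi\circ\nu)[\nabla_{\Gamma_h}\varphi]\big)$. It remains to replace $\div$ by $\div_{\Gamma_h}$ on $\Gamma_h$: the difference is $\partial_\nu\!\big[(\nabla^2\psi\circ\nu)[\dot\nu]\big]\cdot\nu$, which vanishes because $\partial_\nu(\nabla^2\psi\circ\nu)=0$ as above, while $\big((\nabla^2\psi\circ\nu)[\partial_\nu\dot\nu]\big)\cdot\nu=\big((\nabla^2\psi(\nu))[\nu]\big)\cdot\partial_\nu\dot\nu=0$ by \eqref{hesspsi}.

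\emph{Main difficulty.} None of these computations is heavy; the delicate point is the bookkeeping of conventions. One must consistently interpret each dot as the $t$-derivative of the normal (signed-distance) extension, \emph{not} of the graph parametrisation of $\Gamma_{h_t}$ nor of $\nu_{h_t}\circ\Phi_t$: the latter differ from the former by tangential terms — governed by the Weingarten map applied to the tangential component of the vertical velocity $(0,\phi)$ — and it is precisely these terms that, once \eqref{fvar2} is assembled, produce its last integral. A secondary subtlety is that for a merely $C^2$ profile $h$ the quantity $\partial_\nu H^\psi$ is not classically defined; identity (a) is then best read as the definition of this normal derivative through its right-hand side $-\tr(\Bpsi\B)$, which is the $C^0$ quantity that actually enters \eqref{fvar2}.
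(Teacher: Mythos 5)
Your argument is correct, and for parts (a) and (c) it is essentially the paper's computation in different clothing: the paper differentiates $\Bpsi=\nabla(\nabla\psi\circ\nu)$ along $\nu$ and obtains $\partial_\nu\Bpsi=-\Bpsi\B$ directly from $\Bpsi[\nu]=0$, which is the same information as your product rule $\partial_\nu\bigl((\nabla^2\psi\circ\nu)\B\bigr)=(\nabla^2\psi\circ\nu)\partial_\nu\B=-(\nabla^2\psi\circ\nu)\B^2$ via the Riccati identity; and your removal of the normal component in (c) is verbatim the paper's. The genuine difference is in (b): the paper differentiates the explicit graph formula $\nu_t\circ\Phi_t=\frac{(-\nabla h_t,1)}{\sqrt{1+|\nabla h_t|^2}}\circ\pi$ and then subtracts the transport term $(\phi\circ\pi)\partial_y\nu$ using the identity $\partial_y\nu=\nabla_{\Gamma_h}\bigl(\tfrac{1}{\sqrt{1+|\nabla h|^2}}\circ\pi\bigr)$, whereas you pass through the signed distance functions $d_t$, identify $\dot d_0=-\varphi$ on $\Gamma_h$, and use $\partial_\nu\dot d_0=0$ to conclude $\dot\nu=\nabla\dot d_0=-\nabla_{\Gamma_h}\varphi$. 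Your route is shorter and makes explicit which extension of $\nu_t$ the dot refers to (one with $\partial_\nu\nu=0$ and $\nabla\nu$ symmetric) — a point the paper leaves implicit — while the paper's route stays entirely within the graph parametrisation and requires no auxiliary level-set construction; since any two extensions with $\partial_{\nu}\nu=0$ yield the same $\dot\nu$ on $\Gamma_h$, the two computations are consistent, and your closing remarks on the extension bookkeeping and on reading (a) as defining $\partial_\nu H^\psi$ for $C^2$ profiles are apt.
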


\begin{proof}
Recalling that $\nabla\nu[\nu]=0$, we easily deduce that $\nabla \left( \nabla \psi \circ \nu \right) [\nu] = 0$.
By differentiating,
$$
\partial_{\nu} \left( \nabla \left( \nabla \psi \circ \nu \right) \right)
= - \nabla \left( \nabla \psi \circ \nu \right) \nabla \nu =
- \Bpsi\B,
$$
and from this we obtain (a), since
\begin{align*}
\partial_{\nu} H^{\psi}
&= \partial_{\nu} \left[ \div ( \nabla \psi \circ \nu  ) \right]
=  \partial_{\nu} \left[ \tr \left( \nabla \left( \nabla \psi \circ \nu \right) \right)   \right]\\
&= \text{trace} \left[ \partial_{\nu} \left( \nabla \left( \nabla \psi \circ \nu \right) \right)   \right]
= - \text{trace} \left[ \Bpsi\B \right].
\end{align*}

Let us prove (b). Differentiating with respect to $t$ the identity
$$
\nu_t \circ \Phi_t = \frac{(-\nabla h_t,1)}{\sqrt{1+|\nabla h_t|^2}}\circ\pi \qquad \text{on }\Gamma_h,
$$
and evaluating the result at $t=0$, we get that on $\Gamma_h$ holds
\begin{align*}
\dot{\nu}+(\phi\circ\pi)\partial_y\nu
&= \Biggl( \frac{-\nabla\phi}{\sqrt{1+|\nabla h|^2}} + \frac{(\nabla h\cdot\nabla\phi)\nabla h}{(1+|\nabla h|^2)^{\frac32}}, \frac{-\nabla h\cdot \nabla\phi}{(1+|\nabla h|^2)^\frac32}\Biggl)\circ\pi\\
&= \frac{(-\nabla\phi,0)}{\sqrt{1+|\nabla h|^2}}\circ\pi - \left(\frac{\nabla h\cdot\nabla\phi}{1+|\nabla h|^2}\circ\pi\right)\nu\\
&= \biggl(-\frac{1}{\sqrt{1+|\nabla h|^2}}\circ\pi\biggr) \Bigl[\nabla(\phi\circ\pi)-\bigl(\nabla(\phi\circ\pi)\cdot\nu\bigr)\nu\Bigr]\\
&= \biggl(-\frac{1}{\sqrt{1+|\nabla h|^2}}\circ\pi\biggr) \nabla_{\Gamma_h}(\phi\circ\pi).
\end{align*}
Hence, using the identity
$$
\partial_y\nu=\nabla_{\Gamma_h}\biggl(\frac{1}{\sqrt{1+|\nabla h|^2}}\circ\pi\biggr) \qquad\text{on }\Gamma_h,
$$
we finally get
\begin{align*}
\dot{\nu}
&= -\biggl(\frac{1}{\sqrt{1+|\nabla h|^2}}\circ\pi\biggr) \nabla_{\Gamma_h}(\phi\circ\pi) - \nabla_{\Gamma_h}\biggl(\frac{1}{\sqrt{1+|\nabla h|^2}}\circ\pi\biggr)(\phi\circ\pi)\\
&= -\nabla_{\Gamma_h}\biggl(\frac{\phi}{\sqrt{1+|\nabla h|^2}}\circ\pi\biggr) = -\nabla_{\Gamma_h}\varphi,
\end{align*}
that is (b).

Let us prove (c).
Differentiating in the direction $\nu$ the identity
$( \nabla^2 \psi \circ \nu ) [\nu, \dot{\nu}] =0$
(which follows by \eqref{hesspsi}), we obtain
$$
\nu \cdot \partial_{\nu} \left( ( \nabla^2 \psi \circ \nu) [\dot{\nu}] \right) =
 - ( \nabla^2 \psi \circ \nu ) [\dot{\nu}, \partial_{\nu}\nu] =0,
$$
where we recall that $\partial_\nu\nu=0$.
Hence
\begin{align*}
\dot{H}^{\psi}
&= \frac{\partial}{\partial t}H^\psi_t |_{t=0} =
\frac{\partial}{\partial t}
\left[ \div ( \nabla \psi \circ \nu_t ) \right] |_{t=0}
= \div \left( ( \nabla^2 \psi \circ \nu ) [\dot{\nu}] \right) \\
&= \div_{\Gamma_h} \left( ( \nabla^2 \psi \circ \nu ) [\dot{\nu}] \right)
+ \nu \cdot \partial_{\nu} \left( ( \nabla^2 \psi \circ \nu ) [\dot{\nu}] \right) \\
&= \div_{\Gamma_h} \left( ( \nabla^2 \psi \circ \nu ) [\dot{\nu}] \right)
= - \div_{\Gamma_h} \left( ( \nabla^2 \psi \circ \nu ) [ \nabla_{\Gamma_h} \varphi] \right),
\end{align*}
where in the last equality we used (b).
\end{proof}

We are now ready to perform the computation of the second variation of the functional.

\begin{proof}[Proof of Theorem~\ref{th:var2}]
We divide the proof into several steps.

\smallskip
\noindent{\it Step 1.}
We claim that the regularity property stated in Proposition~\ref{IFT}-(iii) guarantees that
the map $(t,z) \mapsto w_t(z) := u_t\circ\Phi_t(z)$ is of class $C^1$ in $(-\e_0,\e_0)\times\overline{\Omega}_h$
for some $\e_0$ small enough.

Indeed, denoting by $w'_{t_0}$ the derivative of the map $t\mapsto w_t$ with respect to the $W^{2,p}$-norm,
evaluated at some $t_0$ (small), we have that
\begin{equation}\label{differenziab}
\textstyle\frac1s \bigl(w_{t_0+s}-w_{t_0}\bigr) \to w'_{t_0}
\qquad\text{in } W^{2,p}(\Omega_h), \text{ as } s\to0.
\end{equation}
In particular, $w_{t_0+s} \to w_{t_0}$ in $C^1(\overline{\Omega}_h)$ as $s\to0$,
showing that the map $(t,z)\mapsto\nabla w_t(z)$ is continuous in $(-\e_0,\e_0)\times\overline{\Omega}_h$.
Moreover, \eqref{differenziab} implies that $w'_{t_0}=\dot{w}_{t_0}$,
and the continuity of $t\mapsto w'_t$ yields $\dot{w}_{t_0+s} \to \dot{w}_{t_0}$ in $C^0(\overline{\Omega}_h)$ as $s\to0$,
showing that the map $(t,z)\mapsto\dot{w}_t(z)$ is continuous in $(-\e_0,\e_0)\times\overline{\Omega}_h$.
The claim follows.

This provides a justification to all the differentiations that will be performed throughout the proof.
Moreover, it is also easily seen that $\dot{u}_t \in \mathcal{V}(\Omega_{h_t})$ for $t\in(-\e_0,\e_0)$.

\smallskip
\noindent{\it Step 2.} We prove \eqref{upto-solves}. Let us recall that $u_t$ satisfies equation \eqref{VolEul}:
\begin{equation} \label{VolEult}
\int_{\Omega_{h_t}} W_{\xi} (\nabla u_t): \nabla w \, dz = 0
\quad \text{ for every }w \in \mathcal{V}(\Omega_{h_t}).
\end{equation}
Fix $w\in \mathcal{V}(\Omega_h)$. Then $w$ may be extended outside $\Omega_h$ in such a way that $w\in\mathcal{V}(\Omega_{h_t})$ for $t$ small. We can differentiate \eqref{VolEult} with respect to $t$ and  evaluate the result at $t=0$  to obtain
\begin{eqnarray}
0&=&\int_{\Omega_h} C_u \nabla \dot{u} : \nabla w \, dz
+ \int_Q \phi (x) \left[ W_{\xi} (\nabla u ) : \nabla w \right] |_{(x,h(x))} \, dx \label{upto2}\\
&=&\int_{\Omega_h} C_u \nabla \dot{u} : \nabla w \, dz
+ \int_{\Gamma_h} \vphi \, W_{\xi} (\nabla u ) : \nabla w \, d \hn. \nonumber
\end{eqnarray}
Recalling that $W_{\xi} (\nabla u) [\nu]=0$ along $\Gamma_h$,
the second integral in the above formula  can be rewritten as
\begin{eqnarray*}
\int_{\Gamma_h} \vphi \, W_{\xi} (\nabla u ) : \nabla w \, d \hn
&=& \int_{\Gamma_h}\vphi\,W_{\xi} (\nabla u ) : \nabla_{\Gamma_h} w\,d\hn \\
&=& -\int_{\Gamma_h} \div_{\Gamma_h} (\vphi\, W_{\xi} ( \nabla u ) ) \cdot w\, d\hn\,.
\end{eqnarray*}
This concludes the proof of \eqref{upto-solves}.

\smallskip
\noindent{\it Step 3.}
We compute the first variation.
By the positive one-homogeneity of $\psi$ we have on $\Gamma_{h_t}$
$$
\psi (\nu_t ) = \psi \left( \frac{( - \nabla h_t  , 1 )}{\sqrt{1 + | \nabla h_t  |^2 }} \circ\pi \right)
= \frac {\psi \left( ( - \nabla h_t  , 1 ) \right)} {\sqrt{1+|\nabla h_t|^2}} \circ\pi.
$$
Hence,
\begin{align*}
\frac{d}{dt}F( h_t, u_{t} )
&=  \frac{d}{dt} \left[ \int_{Q} \int_0^{h_t} W (\nabla u_t) \, dy \, dx
+ \int_Q \psi ( (- \nabla h_t , 1)) \, d x \right] \\
&=  \int_{Q} \phi (x) \left[ W (\nabla u_t )  \right] |_{(x,h_t(x))} \, dx
+  \int_{Q} \int_0^{h_t } W_{\xi} (\nabla u_t) : \nabla \dot{u}_t \, dy \, dx \\
&\hspace{.5cm}- \int_Q \nabla \psi ( (- \nabla h_t , 1)) \cdot (\nabla \phi,0) \, d x .
\end{align*}
Since $\dot{u}_t \in \mathcal{V}(\Omega_{h_t})$ the second integral vanishes by \eqref{VolEult}.
Then, integrating by parts in the last integral and recalling the expression for the anisotropic mean curvature provided by \eqref{Hpsi}, we obtain
\begin{equation} \label{firstvar}
\frac{d}{dt}F( h_t, u_{t} )
=  \int_{Q} \phi (x) \bigl[  W (\nabla u_t ) + H^{\psi}_t \bigr] |_{(x,h_t(x))} \, dx.
\end{equation}

\smallskip
\noindent{\it Step 4.}
We finally pass to the second variation.
Differentiating \eqref{firstvar} with respect to $t$ and evaluating the result at $t=0$ we get
\begin{align*}
\frac{d^2}{dt^2}F(h_t, u_{t} )|_{t=0}
&= \int_Q \phi (x) \left[ W_{\xi} (\nabla u) : \nabla \dot{u} \right] |_{(x,h(x))} \, dx
+ \int_{Q} \phi (x) \dot{H}^{\psi} |_{(x,h (x))} \, dx \\
&\hspace{.5cm}+ \int_Q \phi (x) \left[ \nabla ( W \circ \nabla u + H^{\psi}) \right] |_{(x,h(x))} \cdot (0, \phi (x)) \, dx \\
&= : I_1 + I_2 + I_3.
\end{align*}
Since $\dot{u}\in\mathcal{V}(\Omega_h)$, thanks to \eqref{upto2} the first integral is
$$
I_1 =  - \int_{\Omega_h} C_u \nabla \dot{u} : \nabla \dot{u} \, dz.
$$
For the second integral, changing variables, using identity (c) of Lemma~\ref{lem:identita}
and integrating by parts we get
\begin{align*}
I_2 =  - \int_{\Gamma_h} \varphi \, \div_{\Gamma_h} \left( ( \nabla^2 \psi \circ \nu ) [ \nabla_{\Gamma_h} \varphi] \right)  d \hn
= \int_{\Gamma_h} ( \nabla^2 \psi \circ \nu ) [ \nabla _{\Gamma_h} \varphi, \nabla _{\Gamma_h} \varphi ] \, d\hn.
\end{align*}
To conclude, we observe that along $\Gamma_h$ the vector $(0, \varphi )$
can be decomposed as
$$
(0, \varphi ) = (0, \varphi )_{\Gamma_h} + (0, \varphi )_{\nu},
$$
with $(0, \varphi )_{\Gamma_h}$ tangent to $\Gamma_h$
and $(0, \varphi )_{\nu}$  parallel to $\nu$, \textit{i.e.},
$$
(0, \varphi )_{\Gamma_h} = \varphi \left[ \frac{(\nabla h, |\nabla h|^2)}{1 + |\nabla h|^2} \circ \pi \right] ,
\quad \quad
(0, \varphi )_{\nu} = \varphi \left[ \frac{( - \nabla h, 1)}{1 + |\nabla h|^2} \circ \pi \right].
$$
Hence, recalling the definition of $\varphi$,
changing variables in $I_3$ and integrating by parts:
\begin{align*}
I_3 &= \int_{\Gamma_h} \varphi \, \nabla ( W \circ \nabla u + H^{\psi})
\cdot (0, \varphi ) \left( \sqrt{1 + |\nabla h|^2} \circ \pi \right) \, d \hn \\
&= \int_{\Gamma_h} \varphi^2 \, \nabla_{\Gamma_h} ( W \circ \nabla u + H^{\psi}) \cdot
\left( \frac{(\nabla h, |\nabla h|^2 )} {\sqrt{1 + |\nabla h|^2}} \circ \pi \right) \, d \hn \\
&\hspace{1cm}+ \int_{\Gamma_h} \varphi^2 \, \partial_{\nu} ( W \circ \nabla u + H^{\psi}) \, d \hn \\
&= - \int_{\Gamma_h} ( W \circ \nabla u + H^{\psi}) \, \div_{\Gamma_h} \left[ \left( \frac{(\nabla h, |\nabla h|^2 )}
{\sqrt{1 + |\nabla h|^2}} \circ \pi \right) \vphi^2\right]  \, d \hn \\
&\hspace{1cm}+ \int_{\Gamma_h} \varphi^2 \, \left[ \partial_{\nu} ( W \circ \nabla u) - \tr (\Bpsi\B) \right] \, d \hn,
\end{align*}
where in the last equality we used identity (a) of Lemma~\ref{lem:identita}.
\end{proof}

\begin{remark}\label{rm:var2s}
For a fixed $s\in\R$ sufficiently small, we deduce also from Theorem~\ref{th:var2} that
\begin{align*}
\frac{d^2}{dt^2}F&(h_t, u_{t} )|_{t=s} = \frac{d^2}{dt^2}F(h_{s+t}, u_{s+t} )|_{t=0} \\
&= - \int_{\Omega_{h_s}} C_{u_s} \nabla \dot{u}_s : \nabla \dot{u}_s \, dz
+ \int_{\Gamma_{h_s}} ( \nabla^2 \psi \circ \nu_s ) [\nabla _{\Gamma_{h_s}} \varphi_s, \nabla _{\Gamma_{h_s}} \varphi_s] \, d\hn \\
&\hspace{0.5cm} + \int_{\Gamma_{h_s}} \bigl( \partial_{\nu_s} (  W \circ \nabla u_s ) - \tr (\Bpsi_s\B_s) \bigr) \, \vphi_s^2\,  d\hn \\
&\hspace{0.5cm} - \int_{\Gamma_{h_s}} \bigl( W \circ \nabla u_s +H^{\psi}_s \bigr) \,
\div_{\Gamma_{h_s}} \left[ \left( \frac{(\nabla h_s, |\nabla h_s|^2 )} {\sqrt{1 + |\nabla h_s|^2}} \circ \pi \right) \vphi_s^2\right] \, d \hn,
\end{align*}
where $\vphi_s:=\frac{\phi}{\sqrt{1+|\nabla h_s|^2}}\circ\pi \in\Htilde(\Gamma_{h_s})$,
$\B_s:=\nabla\nu_s$ and $\Bpsi_s:=\nabla(\nabla\psi\circ\nu_s)$.
Moreover, the function $\dot{u}_s$ belongs to $\mathcal{V}(\Omega_{h_s})$ and satisfies the equation
$$
\int_{\Omega_{h_s}} C_{u_s} \nabla \dot{u}_s : \nabla w \, dz
=\int_{\Gamma_{h_s}} \div_{\Gamma_{h_s}} (\vphi_s\, W_{\xi} (\nabla u_s)) \cdot w
\, d\hn \qquad\text{for all $w\in \Vtilde(\Omega_{h_s})$.}
$$
\end{remark}

\smallskip
\subsection{The second order condition}
The expression of the second variation at a critical pair (see Definition~\ref{def:critpair}) simplifies,
as the last integral in \eqref{fvar2} vanishes by the divergence formula.
This observation suggests to associate with every critical pair $(h,u)\in X$
a quadratic form $\partial^2F(h,u):\Htilde(\Gamma_h)\to\R$ defined as
\begin{align} \label{fquad}
\partial^2F(h,u)[\varphi]:= -\int_{\Omega_h}&C_u\nabla v_\varphi : \nabla v_\varphi \,dz
+ \int_{\Gamma_h}(\nabla^2\psi\circ \nu)[\nabla_{\Gamma_h}\varphi,\nabla_{\Gamma_h}\varphi]\,d\hn \nonumber\\
&+ \int_{\Gamma_h} \bigl( \partial_\nu(W\circ\nabla u)- \tr (\Bpsi\B) \bigr) \vphi^2\,d\hn,
\end{align}
where $\vf\in\Vtilde(\Omega_h)$ is the unique solution to
\begin{equation} \label{vf}
\int_{\Omega_h}C_u\nabla\vf : \nabla w = \int_{\Gamma_h}\div_{\Gamma_h}(\vphi W_\xi(\nabla u)) \cdot w \,d\hn
\qquad \text{for every } w\in\Vtilde(\Omega_h).
\end{equation}
It is easily seen that the positivity of the quadratic form \eqref{fquad}
is a necessary condition for local minimality:
this is made precise by the following theorem.

\begin{theorem} \label{teo:nec}
Let $(h,u)\in X$, with $h\in C^2_\#(Q)$ and $u\in C^2(\overline{\Omega}_h^\#;\R^N)$,
be a local minimizer for $F$, according to Definition~\ref{def:locmin},
and assume in addition that $u$ satisfies \eqref{c0}.
Then the quadratic form \eqref{fquad} is positive semidefinite, \textit{i.e.},
$$
\partial^2 F(h,u)[\vphi]\geq 0 \qquad \text{for every }\vphi\in\Htilde(\Gamma_h).
$$
\end{theorem}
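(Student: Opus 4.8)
The plan is to test the local minimality inequality \eqref{locmin} against the explicit one-parameter family $(h_t,u_{h_t})$ built in Section~\ref{sect:varII}, conclude that $t\mapsto F(h_t,u_{h_t})$ attains a local minimum at $t=0$, and hence that its second derivative there is non-negative; by Theorem~\ref{th:var2} this second derivative equals $\partial^2F(h,u)[\vphi]$ for $\vphi$ of a special form, and a density argument in $\Htilde(\Gamma_h)$ then gives the conclusion for every direction. I would flag the continuity of the nonlocal term of $\partial^2F(h,u)$ with respect to the $H^1(\Gamma_h)$-topology (i.e.\ the stability estimate for the elliptic system \eqref{vf}) as the only step requiring genuine care; everything else is a direct consequence of Proposition~\ref{IFT} and of the computations already in Theorem~\ref{th:var2}.

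\emph{Step 1: the competitors are admissible.} I would fix $\phi\in C^2_\#(Q)$ with $\int_Q\phi\,dx=0$ and set $h_t:=h+t\phi$. Since $\int_Q\phi\,dx=0$ we have $|\Omega_{h_t}|=|\Omega_h|$ for all $t$, and for $|t|$ small $h_t$ lies in the neighborhood $\mathcal U$ of Proposition~\ref{IFT} with $\Omega_{h_t}\subset\Omega'$ and $\|h_t-h\|_\infty<\delta$. Let $u_t:=u_{h_t}$ be the associated elastic critical point given by Proposition~\ref{IFT}; since $p>N$, part (iii) of that proposition together with the smoothness of $g\mapsto\Phi_g$ yields $\nabla u_t\to\nabla u$ uniformly as $t\to0$, while by Remark~\ref{rm:IFT} the gradients $\nabla u_t$ stay in a fixed compact subset of $\M^N_+$. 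Extending $u_t$ to $\Omega'_\#$ exactly as $u$ was extended in the paragraph preceding \eqref{spaziocompet} — which changes neither $(h_t,u_t)\in X'$ nor the value $F(h_t,u_t)$ — one gets $\|\nabla u_t-\nabla u\|_{L^\infty(\Omega')}<\delta$ for $|t|$ small. Thus $(h_t,u_t)$ is an admissible competitor in Definition~\ref{def:locmin}, and since $u_0=u_h=u$, the inequality \eqref{locmin} gives $F(h_t,u_t)\ge F(h,u)$ for $|t|$ small, with equality at $t=0$.

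\emph{Step 2: non-negativity along smooth directions.} By the regularity of $g\mapsto u_g\circ\Phi_g$ in Proposition~\ref{IFT} and Step~1 of the proof of Theorem~\ref{th:var2}, the map $t\mapsto F(h_t,u_t)$ is of class $C^2$ near $0$; having a local minimum at $0$, its second derivative there is $\ge0$. Since $(h,u)$ is a critical pair, the last integral in \eqref{fvar2} vanishes by the divergence formula (it is a constant times $\int_{\Gamma_h}\div_{\Gamma_h}[\cdots]\,d\hn=0$), and, observing that $\vf=\dot u$ because both solve the same equation \eqref{vf} (cf.\ \eqref{upto-solves} and \eqref{c0}), we get $\frac{d^2}{dt^2}F(h_t,u_t)|_{t=0}=\partial^2F(h,u)[\vphi]$ with $\vphi=\frac{\phi}{\sqrt{1+|\nabla h|^2}}\circ\pi$. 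Hence $\partial^2F(h,u)[\vphi]\ge0$ for every $\vphi$ arising in this way.

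\emph{Step 3: density.} Given an arbitrary $\vphi\in\Htilde(\Gamma_h)$, I would set $\phi(x):=\vphi(x,h(x))\sqrt{1+|\nabla h(x)|^2}$, which is periodic, of class $H^1$, and satisfies $\int_Q\phi\,dx=\int_{\Gamma_h}\vphi\,d\hn=0$; then I pick $\phi_k\in C^2_\#(Q)$ with $\int_Q\phi_k=0$ and $\phi_k\to\phi$ in $H^1(Q)$ (periodic mollification plus mean correction), and put $\vphi_k:=\frac{\phi_k}{\sqrt{1+|\nabla h|^2}}\circ\pi$, so that $\vphi_k\to\vphi$ in $H^1(\Gamma_h)$, using that $h\in C^2$ makes the weight $C^1$. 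It then remains to check that $\vphi\mapsto\partial^2F(h,u)[\vphi]$ is continuous on $\Htilde(\Gamma_h)$: the second and third integrals in \eqref{fquad} are plainly continuous quadratic forms on $H^1(\Gamma_h)$; for the first one, since $u\in C^2$ one has $\div_{\Gamma_h}(\vphi\,W_\xi(\nabla u))\in L^2(\Gamma_h)$ with norm $\le C\|\vphi\|_{H^1(\Gamma_h)}$, so the right-hand side of \eqref{vf} is a bounded functional on $\Vtilde(\Omega_h)$ depending continuously on $\vphi$, and by \eqref{c0} and Lax--Milgram $\vf$, hence $\int_{\Omega_h}C_u\nabla\vf:\nabla\vf\,dz$, depends continuously on $\vphi\in H^1(\Gamma_h)$. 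Passing to the limit, $\partial^2F(h,u)[\vphi]=\lim_k\partial^2F(h,u)[\vphi_k]\ge0$, which is the assertion.
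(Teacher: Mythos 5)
Your proposal is correct and follows essentially the same route as the paper: test \eqref{locmin} on the family $(h_t,u_{h_t})$ with $\phi=(1+|\nabla h|^2)^{1/2}\,\vphi(\cdot,h(\cdot))$, identify the second derivative at $t=0$ with $\partial^2F(h,u)[\vphi]$ via \eqref{fvar2} (the last integral vanishing since $(h,u)$ is critical), and conclude for general $\vphi$ by density, using the $H^1$-continuity of the quadratic form. The paper states this more tersely; your Steps 1 and 3 merely spell out the admissibility of the competitors and the Lax--Milgram continuity argument that the paper leaves implicit.
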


\begin{proof}
Given any $\vphi\in\Htilde(\Gamma_h)\cap C^\infty(\Gamma_h^\#)$, we can consider the deformation $h_t=h+t\phi$, where $\phi(x)=(1+|\nabla h(x)|^2)^{\frac12} \, \vphi(x,h(x))$, and, for $t$ small, the corresponding critical points for the elastic energy $u_{h_t}$. It follows from equation \eqref{fvar2} and from the local minimality of $(h,u)$ (which is in particular a critical pair) that
$$
\partial^2F(h,u)[\vphi] = \frac{d^2}{dt^2}F(h_t, u_{h_t} )|_{t=0} \geq 0.
$$
For a general $\vphi$ the result follows by approximation with functions in $\Htilde(\Gamma_h)\cap C^\infty(\Gamma_h^\#)$
(observe that $\partial^2F(h,u)$ is continuous with respect to strong convergence in $H^1$).
\end{proof}

\begin{definition} \label{def:stable2}
Let $(h,u)\in X$ be a critical pair for the functional $F$, according to Definition~\ref{def:critpair}.
We say that $(h,u)$ is \emph{strictly stable} if the elastic second variation is uniformly positive at $u$ in $\Omega_h$
(see Definition~\ref{def:stable1}) and in addition
\begin{equation} \label{defpos}
\partial^2 F(h,u)[\vphi]>0 \qquad \text{for every }\vphi\in\Htilde(\Gamma_h)\setmeno\{0\}.
\end{equation}
\end{definition}

The main result of this paper (Theorem~\ref{cor:locmin})
states that a strictly stable critical pair is a local minimizer for $F$,
according to Definition~\ref{def:locmin}.
This will be proved in Sections~\ref{sect:locmin} and \ref{sect:WimpliesL},
while we now focus on condition \eqref{defpos} providing two equivalent formulations.

Given a critical pair $(h,u)\in X$ satisfying \eqref{c0}, we define the bilinear form on $\Htilde(\Gamma_h)$
\begin{equation} \label{prodottoscalare}
(\vphi,\vartheta)_\sim := \int_{\Gamma_h}(\nabla^2\psi\circ\nu)[\nabla_{\Gamma_h}\vphi, \nabla_{\Gamma_h}\vartheta]\,d\hn
+\int_{\Gamma_h}a\,\vphi\,\vartheta\,d\hn
\end{equation}
for $\vphi,\vartheta\in\Htilde(\Gamma_h)$, where $a:=\partial_\nu(W\circ\nabla u)-\tr(\Bpsi\B)$ on $\Gamma_h$.
Arguing as in \cite[Proposition~4.2]{CMM}, one can show that if
\begin{equation} \label{prodscalpos}
(\vphi,\vphi)_\sim>0 \qquad \text{for every }\vphi\in\Htilde(\Gamma_h)\setmeno\{0\},
\end{equation}
then $(\cdot\,,\cdot)_\sim$ is a scalar product which defines
an equivalent norm on $\Htilde(\Gamma_h)$, denoted by $\|\cdot\|_\sim$.
We omit the proof also of the following result, since it can be deduced by repeating the proof of \cite[Proposition~3.6]{FM}
(see also \cite[Proposition~4.3, Theorem~4.6, Theorem~4.10]{CMM}).

\begin{theorem} \label{teo:condequiv}
The following statement are equivalent.
\begin{itemize}
  \item [(i)] Condition \eqref{defpos} holds.
  \item [(ii)] Condition \eqref{prodscalpos} is satisfied and $T:\Htilde(\Gamma_h)\to\Htilde(\Gamma_h)$, defined by duality as
      \begin{equation} \label{T}
      (T\vphi,\vartheta)_\sim :
      = \int_{\Gamma_h} \div_{\Gamma_h}(\vartheta \, W_\xi(\nabla u)) \cdot \vf \,d\hn
      = \int_{\Omega_h} C_u \nabla \vf : \nabla v_{\vartheta} \,dz
      \end{equation}
      for every $\vphi,\vartheta\in\Htilde(\Gamma_h)$, is a compact, monotone, self-adjoint linear operator such that
      \begin{equation} \label{lambda1}
      \lambda_1<1, \qquad\text{where }\lambda_1:=\max_{\|\vphi\|_\sim=1} (T\vphi,\vphi)_\sim.
      \end{equation}
  \item [(iii)] Condition \eqref{prodscalpos} is satisfied and defining, for $v\in\Vtilde(\Omega_h)$, $\Phi_v$ to be the unique solution in $\Htilde(\Gamma_h)$ to the equation
      \begin{equation*}
      (\Phi_v , \vartheta)_\sim =
      \int_{\Gamma_h} \div_{\Gamma_h} (\vartheta \, W_\xi(\nabla u)) \cdot v \, d\hn
      \qquad \text{for every }\,\vartheta\in\Htilde(\Gamma_h),
      \end{equation*}
      we have
      \begin{equation}\label{mu1}
      \mu_1 := \min \biggl\{ \int_{\Omega_h} C_u\nabla v : \nabla v \,dz \;:\; v\in\Vtilde(\Omega_h), \; \|\Phi_v\|_\sim=1 \biggr\} >1.
      \end{equation}
\end{itemize}
\end{theorem}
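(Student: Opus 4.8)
The plan is to establish the equivalence of the three conditions by setting up the abstract functional-analytic framework on the Hilbert space $\Htilde(\Gamma_h)$ and then translating the quadratic form $\partial^2 F(h,u)$ into the language of the operators $T$ and $\Phi_v$. First I would observe that, by the very definition \eqref{fquad} of $\partial^2F(h,u)$ and the bilinear form \eqref{prodottoscalare}, one has the identity
\begin{equation*}
\partial^2F(h,u)[\vphi] = (\vphi,\vphi)_\sim - \int_{\Omega_h} C_u\nabla\vf : \nabla\vf\,dz
\end{equation*}
for all $\vphi\in\Htilde(\Gamma_h)$, where $\vf$ solves \eqref{vf}. The strategy is then: (i)$\Rightarrow$(ii), (ii)$\Rightarrow$(iii), (iii)$\Rightarrow$(i).

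For the implication (i)$\Rightarrow$(ii), I would first note that testing \eqref{defpos} on the finite-dimensional subspace of $\Htilde(\Gamma_h)$ spanned by the eigenfunctions of the surface operator associated with $(\cdot,\cdot)_\sim$ forces \eqref{prodscalpos}, since on such functions the elastic term $\int C_u\nabla\vf:\nabla\vf$ is a lower-order (compact) perturbation; more carefully, one uses that the map $\vphi\mapsto\vf$ is linear and continuous from $\Htilde(\Gamma_h)$ (with the $H^1$-norm) into $\Vtilde(\Omega_h)$, with range controlled by elliptic estimates for the system \eqref{vf}, so that $\vphi\mapsto\int_{\Omega_h}C_u\nabla\vf:\nabla\vf\,dz$ is compact relative to $\|\cdot\|_\sim$; hence if $(\cdot,\cdot)_\sim$ had a nontrivial kernel or a negative direction $\vphi_0$, then $\partial^2F(h,u)[t\vphi_0]\to -\infty$ or stays $\le 0$, contradicting \eqref{defpos}. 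Once \eqref{prodscalpos} holds, $(\cdot,\cdot)_\sim$ is an equivalent scalar product by the cited argument of \cite[Proposition~4.2]{CMM}, and one defines $T$ by the duality formula \eqref{T}: the two expressions for $(T\vphi,\vartheta)_\sim$ agree because $\vf$ solves \eqref{vf}. Symmetry of $T$ follows from $\int_{\Omega_h}C_u\nabla\vf:\nabla v_\vartheta\,dz = \int_{\Omega_h}C_u\nabla v_\vartheta:\nabla\vf\,dz$ (symmetry of $C_u$); monotonicity $\langle T\vphi,\vphi\rangle_\sim = \int C_u\nabla\vf:\nabla\vf\,dz\ge c_0\|\vf\|^2_{H^1}\ge0$ from \eqref{c0}; compactness from the compact embedding $H^1(\Gamma_h)\hookrightarrow\!\hookrightarrow L^2(\Gamma_h)$ combined with the elliptic estimate showing that $\vf$ depends continuously on the $L^2$-trace data. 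Then $\partial^2F(h,u)[\vphi] = (\vphi,\vphi)_\sim - (T\vphi,\vphi)_\sim \ge (1-\lambda_1)\|\vphi\|_\sim^2$, so \eqref{defpos} is equivalent to $\lambda_1<1$, giving (ii); conversely (ii)$\Rightarrow$(i) is immediate from the same identity.

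For the equivalence with (iii), I would unwind the definition of $\Phi_v$: for $v\in\Vtilde(\Omega_h)$ and $\vartheta\in\Htilde(\Gamma_h)$,
\begin{equation*}
(\Phi_v,\vartheta)_\sim = \int_{\Gamma_h}\div_{\Gamma_h}(\vartheta\,W_\xi(\nabla u))\cdot v\,d\hn = \int_{\Omega_h}C_u\nabla v_\vartheta : \nabla v\,dz,
\end{equation*}
where in the last step I used \eqref{vf} with test function $v$. In particular $\Phi_{\vf} = T\vphi$, so that for $v=\vf$ one gets $(\Phi_{\vf},\vphi)_\sim = (T\vphi,\vphi)_\sim = \int_{\Omega_h}C_u\nabla\vf:\nabla\vf\,dz$. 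The key point is a minimax/duality relation between $\lambda_1$ and $\mu_1$: by Cauchy--Schwarz for the scalar product $(\cdot,\cdot)_\sim$ and a density argument, one checks that $\mu_1 = 1/\lambda_1$ (equivalently, the map $v\mapsto\Phi_v$ and $\vphi\mapsto\vf$ are adjoint to each other up to the identification of $\Htilde(\Gamma_h)$ with its dual via $(\cdot,\cdot)_\sim$), so that $\lambda_1<1 \iff \mu_1>1$; this is exactly the computation carried out in \cite[Proposition~3.6]{FM}, which I would simply invoke. Thus (ii)$\iff$(iii), completing the cycle.

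The main obstacle is not the abstract juggling — that is routine linear algebra on Hilbert spaces once the identity $\partial^2F = \|\cdot\|_\sim^2 - (T\cdot,\cdot)_\sim$ is in place — but rather verifying the \emph{compactness} of the operator $T$ (and the well-posedness of $\vf\mapsto$ everything), which rests on regularity for the elliptic system \eqref{vf} with the boundary datum $\div_{\Gamma_h}(\vphi W_\xi(\nabla u))$ living only in a negative-order Sobolev space when $\vphi\in H^1(\Gamma_h)$. This is precisely where the hypothesis \eqref{c0} (equivalently (H1)--(H3) via Theorem~\ref{teo:simpsonspector}) and the $C^2$-regularity of $(h,u)$ enter, and it is the analytic heart of the matter; however, since the statement explicitly defers to \cite[Proposition~3.6]{FM} and \cite[Propositions~4.2, 4.3, Theorems~4.6, 4.10]{CMM}, in the write-up I would only sketch these points and refer to those works, as the paper itself does.
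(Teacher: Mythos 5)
Your proposal is correct and follows essentially the same route as the paper, which itself omits the proof and defers to \cite[Proposition~3.6]{FM} and \cite[Propositions~4.2--4.3, Theorems~4.6, 4.10]{CMM}: the identity $\partial^2F(h,u)[\vphi]=\|\vphi\|_\sim^2-(T\vphi,\vphi)_\sim$, the spectral characterization $\lambda_1<1$, and the duality $\mu_1=1/\lambda_1$ via $\Phi_{\vf}=T\vphi$ are exactly the ingredients of the cited arguments. One small simplification: deducing \eqref{prodscalpos} from (i) needs no compactness or eigenfunction expansion, since \eqref{c0} gives $\int_{\Omega_h}C_u\nabla\vf:\nabla\vf\,dz\ge 0$ and hence $(\vphi,\vphi)_\sim\ge\partial^2F(h,u)[\vphi]>0$ directly.
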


\begin{remark} \label{rm:autovalori}
We remark that, by definition of $T$, we have
\begin{equation} \label{fquadT}
\partial^2F(h,u)[\vphi] = \|\vphi\|^2_\sim - (T\vphi,\vphi)_\sim \qquad \text{for every }\,\vphi\in\Htilde(\Gamma_h).
\end{equation}
Observe also that $\lambda_1$ coincides with the greatest $\lambda$ such that the following system
\begin{equation} \label{autovalori}
\begin{cases}
\lambda \int_{\Omega_h} C_u\nabla v : \nabla w = \int_{\Gamma_h} \div_{\Gamma_h} (\vphi \, W_\xi(\nabla u) ) \cdot w\, d\hn
& \text{for every } w \in\Vtilde(\Omega_h), \\
(\vphi,\psi)_\sim = \int_{\Gamma_h} \div_{\Gamma_h} (\psi \, W_\xi(\nabla u) ) \cdot v\, d\hn
& \text{for every } \psi \in\Htilde(\Gamma_h)
\end{cases}
\end{equation}
admits a nontrivial solution $(v,\vphi)\in\Vtilde(\Omega_h)\times\Htilde(\Gamma_h)$: in fact, $\lambda$ is an eigenvalue of $T$ with eigenfunction $\vphi$ if and only if the pair $(\frac{v_\vphi}{\lambda},\vphi)$ is a nontrivial solution to \eqref{autovalori}.
\end{remark}

\begin{corollary} \label{cor:defpos}
If \eqref{defpos} holds, then $\partial^2F(h,u)$ is uniformly positive: that is, there exists a constant $C>0$ such that
$$
\partial^2F(h,u)[\vphi]\geq C\|\vphi\|^2_{H^1(\Gamma_h)} \qquad \text{for every }\,\vphi\in\Htilde(\Gamma_h).
$$
\end{corollary}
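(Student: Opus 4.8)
The plan is to read off the estimate directly from the spectral reformulation of condition \eqref{defpos} established in Theorem~\ref{teo:condequiv}, together with the algebraic identity \eqref{fquadT} recorded in Remark~\ref{rm:autovalori}. Since \eqref{defpos} holds by hypothesis, Theorem~\ref{teo:condequiv} guarantees that \eqref{prodscalpos} is satisfied — so that $(\cdot\,,\cdot)_\sim$ is a scalar product on $\Htilde(\Gamma_h)$ whose induced norm $\|\cdot\|_\sim$ is equivalent to $\|\cdot\|_{H^1(\Gamma_h)}$ — and moreover that the operator $T$ defined in \eqref{T} is compact, self-adjoint and monotone, with $\lambda_1:=\max_{\|\vphi\|_\sim=1}(T\vphi,\vphi)_\sim<1$.

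First I would use \eqref{fquadT}, namely $\partial^2F(h,u)[\vphi]=\|\vphi\|_\sim^2-(T\vphi,\vphi)_\sim$, together with the positive $2$-homogeneity of $\|\cdot\|_\sim^2$ and of $\vartheta\mapsto(T\vartheta,\vartheta)_\sim$: testing the definition of $\lambda_1$ with $\vphi/\|\vphi\|_\sim$ for an arbitrary nonzero $\vphi\in\Htilde(\Gamma_h)$ yields $(T\vphi,\vphi)_\sim\leq\lambda_1\|\vphi\|_\sim^2$, whence
\[
\partial^2F(h,u)[\vphi]\geq(1-\lambda_1)\,\|\vphi\|_\sim^2\qquad\text{for every }\vphi\in\Htilde(\Gamma_h).
\]
Then I would invoke the norm equivalence to produce $c>0$ with $\|\vphi\|_\sim^2\geq c\,\|\vphi\|_{H^1(\Gamma_h)}^2$ on $\Htilde(\Gamma_h)$, which gives the desired inequality with $C:=(1-\lambda_1)c>0$.

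The point is that there is essentially no obstacle here, since the entire content has already been absorbed into Theorem~\ref{teo:condequiv}: in particular the compactness of $T$ and the accompanying spectral theory — which ensure both that the maximum defining $\lambda_1$ is attained and that the strict inequality $\lambda_1<1$ is genuine — and the norm equivalence, which itself rests on \eqref{prodscalpos} via the argument adapted from \cite[Proposition~4.2]{CMM}. The only step worth a word of care is precisely this use of compactness: it is what upgrades the pointwise strict inequality $(T\vphi,\vphi)_\sim<\|\vphi\|_\sim^2$ for $\vphi\neq0$ — which by itself would not produce a uniform constant — to the quantitative spectral gap $1-\lambda_1>0$ that the conclusion requires.
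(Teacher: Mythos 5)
Your proposal is correct and follows essentially the same route as the paper: both rest on the identity \eqref{fquadT}, the spectral gap $\lambda_1<1$ from Theorem~\ref{teo:condequiv}, and the equivalence of $\|\cdot\|_\sim$ with the $H^1$-norm, yielding $\partial^2F(h,u)[\vphi]\geq(1-\lambda_1)\|\vphi\|_\sim^2\geq C\|\vphi\|_{H^1(\Gamma_h)}^2$. Your additional remark on why compactness of $T$ is the ingredient that turns pointwise strict positivity into a uniform bound is accurate, though the paper leaves it implicit.
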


\begin{proof}
By \eqref{fquadT}, recalling that $\|\cdot\|_\sim$ is an equivalent norm on $\Htilde(\Gamma_h)$ and that $\lambda_1<1$ we have
$$
\partial^2F(h,u)[\vphi]
= \|\vphi\|_\sim^2 - (T\vphi,\vphi)_\sim
\geq (1-\lambda_1)\|\vphi\|_\sim^2
\geq C \|\vphi\|_{H^1(\Gamma_h)}^2,
$$
which is the conclusion.
\end{proof}

\end{section}

%%%%%%%%%%%%%%%%%%%%%%%%%%%%%%%%%%%%%%%%%%%%%%%%%%%%%%%%%%%%%%%%%%%%%%%%%%%%%%%%%%%%%%%%%%%%%%%%%%%%%%%%%%%%%%%%%%%%%%%%%%%%%%%%%%%%%%%%
%%%%%%%%%%%%%%%%%%%%%%%%%%%%%%%%%%%%%%%%%%%%%%%%%%%%%%%%%%%%%%%%%%%%%%%%%%%%%%%%%%%%%%%%%%%%%%%%%%%%%%%%%%%%%%%%%%%%%%%%%%%%%%%%%%%%%%%%
%%%%%%%%%%%%%%%%%%%%%%%%%%%%%%%%%%%%%%%%%%%%%%%%%%%%%%%%%%%%%%%%%%%%%%%%%%%%%%%%%%%%%%%%%%%%%%%%%%%%%%%%%%%%%%%%%%%%%%%%%%%%%%%%%%%%%%%%
%%%%%%%%%%%%%%%%%%%%%%%%%%%%%%%%%%%%%%%%%%%%%%%%%%%%%%%%%%%%%%%%%%%%%%%%%%%%%%%%%%%%%%%%%%%%%%%%%%%%%%%%%%%%%%%%%%%%%%%%%%%%%%%%%%%%%%%%
%%%%%%%%%%%%%%%%%%%%%%%%%%%%%%%%%%%%%%%%%%%%%%%%%%%%%%%%%%%%%%%%%%%%%%%%%%%%%%%%%%%%%%%%%%%%%%%%%%%%%%%%%%%%%%%%%%%%%%%%%%%%%%%%%%%%%%%%

\begin{section}{$W^{2,p}$-local minimality} \label{sect:locmin}

In this section we prove the first part of the main result of the paper,
namely that the strict stability of a critical pair $(h,u)$
is a sufficient condition for local minimality, in the following weaker sense:

\begin{definition} \label{def:locmindeb}
Let $p\in[1,\infty)$.
We say that a critical pair $(h,u)\in X$ is a \textit{$W^{2,p}$-local minimizer} for $F$ if there exists $\delta>0$ such that
\begin{equation} \label{locminw}
F(h,u)\leq F(g,v)
\end{equation}
for all $(g,v)\in X$ with $0<\|g-h\|_{W^{2,p}(Q)}<\delta$, $|\Omega_g|=|\Omega_h|$,
and $\|\nabla v - \nabla u\|_{L^\infty(\Omega_g;\M^{N})}<\delta$.
We say that $(h,u)$ is an \textit{isolated $W^{2,p}$-local minimizer} if the inequality in \eqref{locminw} is strict when $g\neq h$.
\end{definition}

\begin{theorem} \label{teo:locmin}
Let $N=2,3$, and let $p>2N$. If $(h,u)\in X$ is a strictly stable critical pair for $F$,
according to Definition~\ref{def:stable2},
then $(h,u)$ is an isolated $W^{2,p}$-local minimizer for $F$.
\end{theorem}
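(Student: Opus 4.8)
The plan is to argue by contradiction, following the Taylor-expansion strategy standard for such strict-stability/local-minimality results, with the substantial new work being the control of the trace term $\operatorname{tr}(W_\xi(\nabla u))$ appearing in the second variation. Suppose $(h,u)$ is strictly stable but not an isolated $W^{2,p}$-local minimizer. Then there is a sequence $(g_n,v_n)\in X$ with $g_n\neq h$, $0<\|g_n-h\|_{W^{2,p}(Q)}\to 0$, $|\Omega_{g_n}|=|\Omega_h|$, $\|\nabla v_n-\nabla u\|_{L^\infty}<\delta_n\to 0$, and $F(g_n,v_n)\le F(h,u)$. The first reduction is to replace $v_n$ by the elastic quasi-minimizer $u_{g_n}$ furnished by Proposition~\ref{IFT}: since $u_{g_n}$ minimizes the elastic energy among deformations with gradient in a small $L^\infty$-ball (Proposition~\ref{prop:minEl}), for $n$ large we have $\int_{\Omega_{g_n}}W(\nabla u_{g_n})\le\int_{\Omega_{g_n}}W(\nabla v_n)$, hence $F(g_n,u_{g_n})\le F(g_n,v_n)\le F(h,u)$. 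Thus it suffices to derive a contradiction assuming $F(g_n,u_{g_n})\le F(h,u)=F(h,u_h)$.

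Next I set $\phi_n:=g_n-h$ and $t_n:=\|\phi_n\|_{W^{2,p}(Q)}$, write $\psi_n:=\phi_n/t_n$ (unit norm in $W^{2,p}$), and consider the one-parameter path $h_t^{(n)}:=h+t\psi_n$, so that $g_n=h_{t_n}^{(n)}$. By the $C^1$-regularity of $t\mapsto u_{h_t^{(n)}}$ from Proposition~\ref{IFT}, the map $f_n(t):=F(h_t^{(n)},u_{h_t^{(n)}})$ is $C^2$ on a neighbourhood of $0$ (uniformly in $n$, using Remark~\ref{rm:IFT} and the uniform bounds), with $f_n(0)=F(h,u)$, $f_n'(0)=0$ (criticality of $(h,u)$, via \eqref{firstvar}), and $f_n(t_n)\le f_n(0)$. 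A second-order Taylor expansion gives $0\ge f_n(t_n)-f_n(0)=\tfrac{t_n^2}{2}f_n''(\tau_n)$ for some $\tau_n\in(0,t_n)$, whence $f_n''(\tau_n)\le 0$. By Remark~\ref{rm:var2s}, $f_n''(\tau_n)$ equals $\partial^2F(h_{\tau_n}^{(n)},u_{h_{\tau_n}^{(n)}})[\varphi_{\tau_n,n}]$ plus a correction coming from the non-vanishing of the last term in \eqref{fvar2} at the non-critical profile $h_{\tau_n}^{(n)}$; since $(h_{\tau_n}^{(n)},u_{h_{\tau_n}^{(n)}})$ is critical for the elastic energy but need not satisfy \eqref{critpair}, that correction involves $W\circ\nabla u_{h_{\tau_n}^{(n)}}+H^\psi_{h_{\tau_n}^{(n)}}-\text{const}$, which is $O(\|h_{\tau_n}^{(n)}-h\|_{W^{2,p}})=o(1)$ times $\|\varphi_{\tau_n,n}\|^2_{H^1}$. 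The remaining terms of $f_n''(\tau_n)$ converge, as $n\to\infty$ and $\tau_n\to 0$, to the quadratic form $\partial^2F(h,u)$ evaluated along the limiting direction, provided one has compactness of the normalized directions; after extracting a subsequence with $\psi_n\rightharpoonup\psi_\infty$ weakly in $W^{2,p}$ (hence $\varphi_n\to\varphi_\infty$ strongly in $H^1(\Gamma_h)$ by compact embedding), one concludes $\partial^2F(h,u)[\varphi_\infty]\le 0$, forcing $\varphi_\infty=0$ by strict stability \eqref{defpos}. But then one must show $\varphi_\infty\neq 0$, i.e. that no mass is lost in the limit — this is where the normalization and the sharp estimates enter.

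The main obstacle is exactly the point flagged in the introduction: passing to the limit in the term $-\int_{\Omega_h}C_u\nabla v_\varphi:\nabla v_\varphi$ and in $\int_{\Gamma_h}\partial_\nu(W\circ\nabla u)\,\varphi^2$ requires controlling $\nabla(W_\xi(\nabla u_{g_n}))$ — equivalently $v_{\varphi_n}$ and its traces — in function spaces strong enough that the quadratic form is continuous along the sequence, yet the solution of the elliptic system \eqref{vf} only gains the regularity dictated by the Sobolev order of the data $\operatorname{div}_{\Gamma_h}(\varphi\,W_\xi(\nabla u))$, which lives in a negative fractional Sobolev space. The plan is to invoke the new elliptic estimates of Lemma~\ref{lem:algebra} (the three-dimensional, nonlinear generalization of \cite[Lemma 4.1]{FM}), which provide the bound $\|v_{\varphi_n}\|_{H^1(\Omega_{g_n})}\le C\|\varphi_n\|_{H^{-1/2}(\Gamma_{g_n})}$ together with the continuous dependence of the map $\varphi\mapsto v_\varphi$ under $W^{2,p}$-perturbation of the domain; combined with the compact embedding $H^1(\Gamma_h)\hookrightarrow H^{-1/2}(\Gamma_h)$ this yields strong $H^1$-convergence of the elastic correctors and hence continuity of $\partial^2F$ along the sequence. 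To close the argument one normalizes $\varphi_n$ in the $\|\cdot\|_\sim$-norm rather than arbitrarily — using the equivalence of $\|\cdot\|_\sim$ with $\|\cdot\|_{H^1(\Gamma_h)}$ from \eqref{prodscalpos} — so that $\|\varphi_\infty\|_\sim=\lim\|\varphi_n\|_\sim=1\neq 0$, contradicting $\varphi_\infty=0$ and completing the proof.
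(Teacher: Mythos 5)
Your overall architecture (contradiction, normalization of the direction, second-order Taylor expansion of $t\mapsto F(h_t,u_{h_t})$, passage to the limit in the quadratic form) is a genuinely different route from the paper, which instead proves directly that $f''(t)\geq 2C_2\|\vphi_g\|^2_{H^1(\Gamma_g)}$ \emph{uniformly} along the whole segment $h_t=h+t(g-h)$ and for all $g$ in a small $W^{2,p}$-ball, by first establishing the upper semicontinuity $\limsup_{g\to h}\lambda_{1,g}\leq\lambda_1<1$ of the largest eigenvalue of the transported operator $T_g$. However, your argument has a genuine gap at its decisive step: the claim that ``$\|\vphi_\infty\|_\sim=\lim_n\|\vphi_n\|_\sim=1\neq0$''. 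If you normalize so that $\|\vphi_n\|_\sim=1$, the sequence is merely bounded in $H^1(\Gamma_h)$, hence only \emph{weakly} convergent there, and the norm $\|\cdot\|_\sim$ (equivalent to the $H^1$-norm) is not weakly continuous; the limit $\vphi_\infty$ may well vanish. If instead you normalize $\|\psi_n\|_{W^{2,p}(Q)}=1$, the compact embedding does give strong $C^{1,\alpha}$ (hence $H^1(\Gamma_h)$) convergence, but then nothing prevents $\vphi_\infty=0$ either (high-frequency, small-amplitude oscillations have unit $W^{2,p}$-norm and vanishing $C^1$-norm). Either way, the conclusion $\partial^2F(h,u)[\vphi_\infty]\leq0$ with $\vphi_\infty\neq0$ is not justified, and the contradiction does not close. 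The standard repair is a dichotomy: when $\vphi_n\wto0$ weakly in $H^1(\Gamma_h)$ with $\|\vphi_n\|_\sim=1$, the nonlocal term $-\int C_u\nabla\vf:\nabla\vf$ and the zeroth-order term vanish in the limit (compactness of $\vphi\mapsto\vf$ and strong $L^2$ convergence), so the surviving coercive gradient term forces $\liminf_n\partial^2F(h,u)[\vphi_n]\geq1$, contradicting $f_n''(\tau_n)\leq0$; this extra step (or the paper's eigenvalue argument, which packages it) is missing from your proposal.

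Two secondary points. First, you misattribute the content of Lemma~\ref{lem:algebra}: it does \emph{not} provide the bound $\|v_{\vphi}\|_{H^1}\leq C\|\vphi\|_{H^{-1/2}}$ (that is a routine consequence of the coercivity \eqref{provv} and Lemma~\ref{lem:gagliardo}); its role is to control the coefficient $a_g=\partial_{\nu_g}(W\circ\nabla u_g)-\tr(\Bpsi_g\B_g)$ of the zeroth-order term in $W^{-\frac1p,p}_\#$, uniformly under $W^{2,p}$-perturbations of the profile, which is what makes the quadratic form continuous with respect to the domain. Second, even granting the limit $\partial^2F(h,u)[\vphi_\infty]$, you still need the quantitative transfer $f_n''(\tau_n)\geq\partial^2F(h,u)[\vphi_n]-o(1)\|\vphi_n\|^2_{H^1(\Gamma_h)}$ uniformly in $n$ (the paper's Steps~1--2 plus the H\"older estimate \eqref{eqp4} for the non-critical remainder, which uses $p>2N$); you gesture at this correctly but do not supply it.
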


The proof will be achieved by following, essentially,
the strategy developed in \cite[Proposition~4.5 and Theorem~4.6]{FM} (see also \cite[Theorem~5.1]{CMM}).
As has been observed in \cite{FM}, the main difficulty in proving Theorem~\ref{teo:locmin}
comes from the presence, in the expression of the quadratic form associated with the second variation,
of the trace of the gradient of $W(\nabla u)$ on $\Gamma_h$:
the crucial estimate is provided by Lemma~\ref{lem:algebra},
where it is shown how to control this term in a proper Sobolev space of fractional order,
uniformly with respect to small $W^{2,p}$-variations of the profile $h$
(we refer to Section~\ref{sect:appendix} for the definition and properties of fractional Sobolev spaces).

Let $\mathcal{U}_\delta := \{g\in C^\infty_\#(Q) : \|g-h\|_{W^{2,p}(Q)}<\delta, \, |\Omega_g|=|\Omega_h|\}$,
where $\delta>0$ is so small that $\mathcal{U}_\delta$ is contained in the neighborhood $\mathcal{U}$ of $h$ determined by Proposition~\ref{IFT}: this allows us to consider, for $g\in\mathcal{U}_\delta$, a critical point $u_g$ for the elastic energy in $\Omega_g$.
We denote by $c_0$ a positive constant such that $g\geq 2c_0$ in $Q$ for every $g\in\mathcal{U}_\delta$.

\begin{lemma} \label{lem:algebra}
Under the assumptions of Theorem~\ref{teo:locmin}, we have that
$$
\sup_{g\in\mathcal{U}_\delta}
\| \partial_{\nu_g}(W(\nabla u_g))\circ\Phi_g-\partial_{\nu}(W(\nabla u)) \|_{W_\#^{-\frac1p,p}(\Gamma_h)} \to 0
\quad\text{as }\delta\to0.
$$
\end{lemma}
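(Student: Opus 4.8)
The plan is to reduce the statement, via the change of variables $\Phi_g$, to a uniform estimate for a family of elliptic systems on the fixed domain $\Omega_h$, and then to exploit the $C^1$-dependence of $g\mapsto u_g\circ\Phi_g$ established in Proposition~\ref{IFT}-(iii). First I would pull everything back to $\Omega_h$: setting $\tilde u_g := u_g\circ\Phi_g$, the quantity $\partial_{\nu_g}(W(\nabla u_g))\circ\Phi_g$ can be rewritten, using the chain rule and the explicit form of $\Phi_g$ near $\Gamma_h$ (where $\nabla\Phi_g$ and $\nu_g\circ\Phi_g$ depend on $g$ only through $\nabla g$, which is controlled in $C^{0,\alpha}$ by the embedding $W^{2,p}\hookrightarrow C^{1,\alpha}$), as an expression of the form $\partial_{\nu}\bigl(W(\nabla\tilde u_g\,(\nabla\Phi_g)^{-1})\bigr)$ corrected by lower-order boundary terms that converge uniformly (in fact in $C^{0,\alpha}(\Gamma_h)$, hence a fortiori in $W^{-1/p,p}_\#(\Gamma_h)$) to their values at $g=h$. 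So the real content is to show $\partial_{\nu}\bigl(W(\nabla\tilde u_g)\bigr)\to\partial_{\nu}\bigl(W(\nabla u)\bigr)$ in $W^{-1/p,p}_\#(\Gamma_h)$.

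The key point is that $\nabla\tilde u_g$ does \emph{not} converge in $C^0$ well enough to take normal derivatives naively—we only control $\tilde u_g$ in $W^{2,p}$, so $\partial_\nu(W(\nabla\tilde u_g))$ lies only in $W^{-1/p,p}_\#(\Gamma_h)$ and must be estimated by duality. The mechanism for gaining this control is the elliptic system solved by $\tilde u_g$, namely \eqref{VolEul2}: testing against $\tilde w\in\mathcal V(\Omega_h)$ and integrating by parts, one expresses the pairing of $\div\bigl[Q_{\Phi_g}(z,\nabla\tilde u_g)\bigr]$ (which vanishes) and the conormal trace against an arbitrary test function, so that the normal trace of the (transformed) stress is characterized weakly; the tangential part of $\nabla\tilde u_g$ on $\Gamma_h$ is controlled by the trace of $\tilde u_g$ in $W^{1-1/p,p}(\Gamma_h)$, hence by $\|\tilde u_g\|_{W^{2,p}}$, while the normal part is recovered from the weak equation. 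This is the step where the condition \eqref{c0}—equivalently (H1)--(H3) of Theorem~\ref{teo:simpsonspector}, needed for the Agmon--Douglis--Nirenberg estimates invoked in Proposition~\ref{IFT}—enters, guaranteeing that the difference $\tilde u_g-u$ is controlled in $W^{2,p}(\Omega_h)$ by $\|g-h\|_{W^{2,p}(Q)}$; combined with the $C^3$-regularity of $W$ and Remark~\ref{rm:IFT} (the gradients $\nabla\tilde u_g$ range in a fixed compact set $K\subset\M^N_+$, so $W$ and its derivatives are Lipschitz there), one obtains that $W(\nabla\tilde u_g)\to W(\nabla u)$ in $W^{1,p}(\Omega_h)$ and that $Q_{\Phi_g}(\cdot,\nabla\tilde u_g)\to W_\xi(\nabla u)$ in $W^{1,p}(\Omega_h;\M^N)$.

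Once this is in hand, the uniform $W^{-1/p,p}_\#$-estimate follows from the standard bound for the normal-derivative trace: for a scalar function $f\in W^{1,p}(\Omega_h)$ whose pull-back of the stress divergence is fixed, $\|\partial_\nu f\|_{W^{-1/p,p}_\#(\Gamma_h)}$ is controlled by $\|f\|_{W^{1,p}(\Omega_h)}$ plus the $L^p$-norm of $\div(\text{flux})$, via the Gauss--Green formula tested against $W^{1,p'}$-functions with prescribed trace; applying this to the difference $W(\nabla\tilde u_g)-W(\nabla u)$ and using the convergences above gives the claimed vanishing of the supremum as $\delta\to0$. The main obstacle, and the reason Lemma~\ref{lem:algebra} is described in the introduction as a ``highly non-trivial generalization'' of \cite[Lemma~4.1]{FM}, is precisely the bookkeeping of the nonlinear and higher-dimensional change of variables: in the linear two-dimensional case of \cite{FM} the stress is an explicit linear function of $\nabla u$ and one can compute the relevant harmonic-conjugate-type identities by hand, whereas here one must carefully isolate which combinations of $\nabla\tilde u_g$ on $\Gamma_h$ are controlled by the weak equation and which by the trace theorem, and verify that the error terms coming from $(\nabla\Phi_g)^{-1}-I$ and from $\nu_g\circ\Phi_g-\nu$ genuinely converge to zero in the right negative-order norm uniformly in $g\in\mathcal U_\delta$.
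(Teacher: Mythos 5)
Your reduction to the fixed domain and your use of the differentiated elliptic system are in the right spirit, but the final step of the proposal contains a genuine gap. You claim that $W(\nabla \tilde u_g)\to W(\nabla u)$ in $W^{1,p}(\Omega_h)$ (true) and then invoke a ``standard bound for the normal-derivative trace'' asserting that $\|\partial_\nu f\|_{W^{-1/p,p}_\#(\Gamma_h)}$ is controlled by $\|f\|_{W^{1,p}(\Omega_h)}$ plus the $L^p$-norm of the divergence of a flux. No such bound exists here: for $f\in W^{1,p}(\Omega_h)$ only the \emph{tangential} derivative of the trace is controlled in $W^{-1/p,p}_\#(\Gamma_h)$ by $\|\nabla f\|_{L^p}$ (this is exactly Theorem~\ref{teo:sobolev4}), while the normal derivative requires an elliptic equation for $f$ itself whose conormal derivative coincides with $\partial_\nu f$. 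The scalar $f=W(\nabla u_g)$ satisfies no such equation: $\partial_\nu(W(\nabla u_g))=\sum_k\nu_g^k\,W_\xi(\nabla u_g):\nabla\bigl(\partial u_g/\partial z_k\bigr)$ involves \emph{all} second derivatives of $u_g$ on $\Gamma_g$, including the purely normal--normal ones, and these are precisely the components that neither the trace theorem nor the weak form of $\div[W_\xi(\nabla u_g)]=0$ gives you directly.

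What the weak equation does give, after differentiating in $z_k$ and testing against extensions of boundary data (your Step 1 and the paper's), is control of the $N^2$ conormal quantities $C_{u_g}\nabla(\partial_k v_g)[\nu_g]$ in $W^{-1/p,p}_\#(\Gamma_g)$; together with the tangential-derivative quantities $\sigma_{ikj}\nu_g^3-\sigma_{i3j}\nu_g^k$ these form an overdetermined list of linear combinations of the $18$ distinct second derivatives $\sigma_{ijk}=\partial^2 v_g^k/\partial z_i\partial z_j$ (in $N=3$). The missing idea — and the actual core of the lemma — is to select $18$ of these combinations forming an invertible linear system, and to verify that its determinant equals $(\nu_g^3)^{12}\det Q_g$ with $Q_g$ the matrix $q_{ih}=\sum_{j,k}C_{ijhk}\nu_g^j\nu_g^k$, so that strong ellipticity (hypothesis (H1) via \eqref{c0}) makes it uniformly invertible and every $\sigma_{ijk}$, in particular the normal--normal ones, is recovered in $W^{-1/p,p}_\#(\Gamma_g)$. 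Without this inversion (done by hand for $N=2$ and by a computer-assisted determinant computation for $N=3$ in the paper), the passage from the controlled traces to $\partial_\nu(W(\nabla u_g))$ does not go through, and your sentence about ``isolating which combinations are controlled by the weak equation and which by the trace theorem'' leaves exactly this step unresolved. A further (minor) omission is the duality step needed to multiply the resulting $W^{-1/p,p}$-convergent quantities by $W_\xi(\nabla u_g\circ\Phi_g)$ and by test functions, which requires the multiplier estimate of the paper's Step 3.
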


\begin{proof}
We set, for $g\in\mathcal{U}_\delta$, $v_g := u_g - u\circ\Psi_g$ (where $\Psi_g:=\Phi_g^{-1}$),
and we denote by $v_g^i$ the components of $v_g$.
We remark that, by Proposition~\ref{IFT},
\begin{equation} \label{convw2p}
\sup_{g\in\mathcal{U}_\delta} \|v_g\|_{W^{2,p}(\Omega_g;\R^N)}\to0
\qquad
\text{as }\delta\to0,
\end{equation}
and moreover, since $p>2N$, $u_g\circ\Phi_g\to u$ in $C^{1,\alpha}(\overline{\Omega}_h;\R^N)$ as $\delta\to0$,
for $\alpha=1-\frac{N}{p}$,
uniformly with respect to $g\in\mathcal{U}_\delta$.

\smallskip
\noindent{\it Step 1.}
We start by observing that,
using the equations satisfied by $u$ and $u_g$ and performing a change of variable, we get
\begin{align*}
\int_{\Omega_g} \bigl[ W_\xi(\nabla u_g) - W_\xi(\nabla(u\circ\Psi_g)) \bigr] : \nabla w \,dz
= \int_{\Omega_g} d_g : \nabla w \,dz
\qquad\text{for all }w\in\mathcal{V}(\Omega_g),
\end{align*}
where $d_g:= W_\xi(\nabla(u\circ\Psi_g)(\nabla\Psi_g)^{-1})(\nabla\Psi_g)^{-T}\det\nabla\Psi_g - W_\xi(\nabla(u\circ\Psi_g))$.
Observe in particular that, by using the explicit construction of the diffeomorphism $\Psi_g$ (see Remark~\ref{rm:diffeo})
and the regularity of $u$,
\begin{equation} \label{stima-1lemma}
\sup_{g\in\mathcal{U}_\delta} \|d_g\|_{W^{1,p}(\Omega_g;\M^N)} \to 0, \qquad
\sup_{g\in\mathcal{U}_\delta} \Big\|\frac{\partial d_g}{\partial z_k} \Big\|_{L^{p}(\Gamma_g;\M^N)} \to 0  \qquad
\text{as } \delta\to0.
\end{equation}
Fix now any $\vphi\in W^{\frac1p,\frac{p}{p-1}}_\#(\Gamma_g;\R^N)$,
and consider an extension of $\vphi$ (which we still denote by $\vphi$)
such that $\vphi\in W^{1,\frac{p}{p-1}}_\#(\Omega_g;\R^N)$, $\vphi$ vanishes in $\Omega_{g-c_0}$ and
\begin{equation} \label{stima-2lemma}
\|\vphi\|_{W^{1,\frac{p}{p-1}}(\Omega_g;\R^N)}\leq C\|\vphi\|_{W^{\frac1p,\frac{p}{p-1}}(\Gamma_g;\R^N)}
\end{equation}
for some constant $C>0$ which can be chosen independently of $g\in\mathcal{U}_\delta$
(see Theorem~\ref{teo:sobolev3}).

Differentiating the equation $\div[W_\xi(\nabla u_g)-W_\xi(\nabla(u\circ\Psi_g))]=\div d_g$ with respect to $z_k$,
multiplying by $\vphi$ and integrating by parts on $\Omega_g$ we get
\begin{align*}
\int_{\Gamma_g} &C_{u_g}\nabla\Bigl(\frac{\partial v_g}{\partial z_k}\Bigr) [\nu_g] \cdot\vphi \,d\hn
= \int_{\Gamma_g} (C_{u\circ\Psi_g}-C_{u_g})\nabla\Bigl(\frac{\partial(u\circ\Psi_g)}{\partial z_k}\Bigr) [\nu_g] \cdot\vphi \,d\hn \nonumber\\
&\hspace{.5cm} + \int_{\Omega_g} \Bigl[ C_{u_g}\nabla\Bigl(\frac{\partial u_g}{\partial z_k}\Bigr) - C_{u\circ\Psi_g}\nabla\Bigl(\frac{\partial (u\circ\Psi_g)}{\partial z_k}\Bigr) -\frac{\partial d_g}{\partial z_k} \Bigr] : \nabla\vphi \,dz
 +\int_{\Gamma_g} \frac{\partial d_g}{\partial z_k}[\nu_g] \cdot\vphi \,d\hn \nonumber\\
& \leq C \biggl(
 \| C_{u\circ\Psi_g}-C_{u_g} \|_\infty \| \nabla^2 (u\circ\Psi_g) \|_{L^{p}(\Gamma_g)}
 + \| d_g \|_{W^{1,p}(\Omega_g;\M^N)} + \Big\| \frac{\partial d_g}{\partial z_k} \Big\|_{L^{p}(\Gamma_g;\M^N)} \nonumber\\
&\hspace{1.5cm} + \Big\| C_{u_g}\nabla\Bigl(\frac{\partial u_g}{\partial z_k}\Bigr) - C_{u\circ\Psi_g}\nabla\Bigl(\frac{\partial (u\circ\Psi_g)}{\partial z_k}\Bigr) \Big\|_{L^p(\Omega_g;\M^N)}
 \biggr) \|\vphi\|_{W^{\frac1p,\frac{p}{p-1}}(\Gamma_g;\R^N)},
\end{align*}
where we repeatedly used \eqref{stima-2lemma}
(here the constant $C$ is independent of $g\in\mathcal{U}_\delta$).
Hence, recalling \eqref{convw2p} and \eqref{stima-1lemma},
we deduce that for $k=1,\ldots,N$
\begin{equation} \label{stima0lemma}
\sup_{g\in\mathcal{U}_\delta}
\bigg\| C_{u_g}\nabla\Bigl(\frac{\partial v_g}{\partial z_k}\Bigr) [\nu_g] \bigg\|_{W^{-\frac1p,p}_\#(\Gamma_g;\R^N)}\to 0
\qquad\text{as }\delta\to0.
\end{equation}

\smallskip
\noindent{\it Step 2.}
We now claim that for $k=1,\ldots,N$
\begin{equation} \label{claim1lemma}
\sup_{g\in\mathcal{U}_\delta}
\Big\| \nabla\Bigl(\frac{\partial u}{\partial z_k}\Bigr) - \nabla\Bigl(\frac{\partial u_g}{\partial z_k}\Bigr)\circ\Phi_g \Big\|_{W^{-\frac1p,p}_\#(\Gamma_h;\M^N)} \to0
\qquad\text{as }\delta\to0.
\end{equation}

We first note that, thanks to the uniform convergence of $C_{u_g}\circ\Phi_g$ to $C_u$
and to the strong ellipticity of $C_u$,
also the tensors $C_{u_g}$ are strongly elliptic for every $g\in\mathcal{U}_\delta$,
if $\delta$ is sufficiently small; in particular, there exists a positive constant $m_0$ such that
$$
C_{u_g}(z) \, a\otimes b : a\otimes b \geq m_0 \, |a|^2 \, |b|^2
\qquad\text{for every }a,b\in\R^N,
$$
for every $z\in\overline{\Omega}_g$ and for every $g\in\mathcal{U}_\delta$.
Hence the $N\times N$ matrix $Q_g(z)$, whose entries are defined by
\begin{equation} \label{matrice}
q_{ih}(z):=\sum_{j,k=1}^N C_{ijhk}(z)\nu_g^j(z)\nu_g^k(z),
\qquad i,h=1,\ldots,N
\end{equation}
($C_{ijhk}$ denoting the components of the tensor $C_{u_g}$),
is positive definite, and $\det Q_g(z)$ is uniformly positive
with respect to $z\in\overline{\Omega}_g$ and $g\in\mathcal{U}_\delta$.

Setting, for $i,j,k=1,\ldots,N$,
$$
\sigma_{ijk}:= \frac{\partial^2 v_g^k}{\partial z_i\partial z_j} \,,
$$
by Lemma~\ref{lem:sobolev} our claim reduces to show that
$$
\sup_{g\in\mathcal{U}_\delta} \|\sigma_{ijk}\|_{W^{-\frac1p,p}_\#(\Gamma_g)} \to 0 \qquad\text{as }\delta\to0.
$$

We start from the case $N=2$. Consider the following system of equations at the points of $\Gamma_g$:
\begin{equation} \label{sistema6x6}
\left(
  \begin{array}{c}
    \eta_1 \\
    \eta_2 \\
    \vartheta_{11} \\
    \vartheta_{12} \\
    \vartheta_{21} \\
    \vartheta_{22} \\
  \end{array}
\right)
:=
\left(
  \begin{array}{cccccc}
    0 & 0 & a & b & c & d \\
    0 & 0 & a' & b' & c' & d' \\
    \nu_g^2 & 0 & -\nu_g^1 & 0 & 0 & 0 \\
    0 & \nu_g^2 & 0 & -\nu_g^1 & 0 & 0 \\
    0 & 0 & \nu_g^2 & 0 & -\nu_g^1 & 0 \\
    0 & 0 & 0 & \nu_g^2 & 0 & -\nu_g^1 \\
  \end{array}
\right)
\cdot
\left(
  \begin{array}{c}
    \sigma_{111} \\
    \sigma_{112} \\
    \sigma_{121} \\
    \sigma_{122} \\
    \sigma_{221} \\
    \sigma_{222} \\
  \end{array}
\right),
\end{equation}
where the coefficients in the first two rows of the matrix are defined by
\begin{align*}
a= C_{1111}\nu_g^1 + C_{1211}\nu_g^2, \quad
b= C_{1121}\nu_g^1 + C_{1221}\nu_g^2, \\
c= C_{1112}\nu_g^1 + C_{1212}\nu_g^2, \quad
d= C_{1122}\nu_g^1 + C_{1222}\nu_g^2, \\
a'= C_{2111}\nu_g^1 + C_{2211}\nu_g^2, \quad
b'= C_{2121}\nu_g^1 + C_{2221}\nu_g^2, \\
c'= C_{2112}\nu_g^1 + C_{2212}\nu_g^2, \quad
d'= C_{2122}\nu_g^1 + C_{2222}\nu_g^2
\end{align*}
in such a way that
$$
\eta_1 = \biggl( C_{u_g}\nabla\Bigl(\frac{\partial v_g}{\partial z_2}\Bigr) [\nu_g] \biggr) \cdot e_1, \qquad
\eta_2 = \biggl( C_{u_g}\nabla\Bigl(\frac{\partial v_g}{\partial z_2}\Bigr) [\nu_g] \biggr) \cdot e_2.
$$
Hence by \eqref{stima0lemma} we have
\begin{equation} \label{stima1lemma}
\|\eta_i\|_{W^{-\frac1p,p}_\#(\Gamma_g)} \to0 \qquad\text{as }\delta\to0
\end{equation}
(uniformly with respect to $g\in\mathcal{U}_\delta$).
Moreover, observe that we can write each $\vartheta_{ij}$ as a tangential derivative on $\Gamma_g$:
$$
\vartheta_{ij} = \partial_{\tau_g} \Bigl(\frac{\partial v^j_g}{\partial z_i}\Bigr)
= \nabla \Bigl(\frac{\partial v^j_g}{\partial z_i}\Bigr)\cdot(\nu_g^2,-\nu_g^1),
$$
so that by \cite[Theorem~8.6]{FM} we also have
\begin{equation} \label{stima2lemma}
\| \vartheta_{ij} \|_{W^{-\frac1p,p}_\#(\Gamma_g)}
\leq C \Big\| \nabla\Bigl(\frac{\partial v_g^j}{\partial z_i}\Bigr) \Big\|_{L^p(\Omega_g;\R^2)}
\to0 \qquad\text{as }\delta\to0
\end{equation}
(uniformly with respect to $g\in\mathcal{U}_\delta$).
To conclude, observe that the $6\times6$ matrix in \eqref{sistema6x6}
has coefficients uniformly bounded in $C^{0,\alpha}$ with respect to $g\in\mathcal{U}_\delta$,
for $\alpha=1-\frac{2}{p}>\frac{1}{p}$ (as $p>4$);
if we are able to show that its determinant is uniformly positive,
then we can invert the relations in \eqref{sistema6x6} and express $\sigma_{ijk}$
as linear combinations of the quantities estimated in \eqref{stima1lemma} and \eqref{stima2lemma},
and in turn \eqref{claim1lemma} follows by Lemma~\ref{lem:sobolev}.
Hence we are left with the computation of the determinant of the $6\times6$ matrix $M$ appearing in \eqref{sistema6x6},
which turns out to be equal to
$$
\det M = (\nu_g^2(z))^2 \det Q_g(z),
$$
which is uniformly positive as observed before. This concludes the proof of Step 1 in the case $N=2$.

In the three-dimensional case we follow the same strategy.
We observe that, setting
\begin{equation} \label{matrice2}
\eta_{ik} := \biggl( C_{u_g}\nabla\Bigl(\frac{\partial v_g}{\partial z_k}\Bigr) [\nu_g] \biggr) \cdot e_i
\qquad\text{for }i,k=1,2,3,
\end{equation}
by \eqref{stima0lemma} we have
\begin{equation*}
\|\eta_{ik}\|_{W^{-\frac1p,p}_\#(\Gamma_g)} \to0 \qquad\text{as }\delta\to0
\end{equation*}
(uniformly with respect to $g\in\mathcal{U}_\delta$).
Moreover by Theorem~\ref{teo:sobolev4} we have also a similar estimate for the quantities $\vartheta_{ijk}:=\sigma_{ikj}\nu_g^3-\sigma_{i3j}\nu_g^k$
for $i,j=1,2,3$ and $k=1,2$, namely
$$
\|\vartheta_{ijk}\|_{W^{-\frac1p,p}_\#(\Gamma_g)}
\leq C \Big\|\nabla\Bigl(\frac{\partial v_g^j}{\partial z_i}\Bigr)\Big\|_{L^p(\Omega_g;\R^3)} \to0
$$
as $\delta\to 0$, uniformly with respect to $g\in\mathcal{U}_\delta$.
Hence we can write a linear system similar to \eqref{sistema6x6}
by choosing 18 among the 27 quantities $\vartheta_{ijk}$, $\eta_{ik}$
to be expressed as combinations of the 18 (different) terms $\sigma_{ijk}$:
precisely, we consider $\eta_{ik}$ for $k=3$ and $i=1,2,3$,
and all the $\vartheta_{ijk}$ except for $\vartheta_{211}$, $\vartheta_{221}$, $\vartheta_{231}$.
As before, the (computer assisted) computation of the determinant of the $18\times18$ matrix of the system obtained in this way
shows that this coincides (up to a sign) with $(\nu_g^3(z))^{12}\det Q_g(z)$,
which is uniformly positive (see Section~\ref{sect:appdeterminante} for more details).
Inverting these relations we can then write each term $\sigma_{ijk}$ as a linear combination
of the quantities $\vartheta_{ijk}$, $\eta_{ik}$,
and from the previous estimates the claim follows, again using Lemma~\ref{lem:sobolev}.

\smallskip
\noindent{\it Step 3.}
We claim that there exists a constant $C$, independent of $g\in\mathcal{U}_\delta$,
such that for every $\vphi\in W^{\frac1p,\frac{p}{p-1}}_\#(\Gamma_h)$
\begin{equation} \label{claim2lemma}
\big\| W_\xi(\nabla u_g\circ\Phi_g) \vphi \big\|_{W^{\frac1p,\frac{p}{p-1}}(\Gamma_h;\M^N)}
\leq C \, \| \vphi \|_{W^{\frac1p,\frac{p}{p-1}}(\Gamma_h)}.
\end{equation}
In fact, we use Theorem~\ref{teo:sobolev3} to extend $\vphi$ to a function $\tilde{\vphi}\in W_\#^{1,\frac{p}{p-1}}(\Omega_h)$.
Note that, by the Sobolev Imbedding Theorem, setting $q:=\frac{Np}{Np-N-p}$ we have
$$
\|\tilde{\vphi}\|_{L^q(\Omega_h)}
\leq C \|\tilde{\vphi}\|_{W^{1,\frac{p}{p-1}}(\Omega_h)}
\leq C \|\vphi\|_{W^{\frac1p,\frac{p}{p-1}}(\Gamma_h)}
$$
for some constant $C$ independent of $g$ (the second inequality still follows from Theorem~\ref{teo:sobolev3}).
Hence, using H\"{o}lder inequality, we deduce that
\begin{align*}
\| W_\xi&(\nabla u_g\circ\Phi_g)\vphi \|_{W^{\frac1p,\frac{p}{p-1}}(\Gamma_h;\M^N)}
 \leq C \, \| W_\xi(\nabla u_g\circ\Phi_g) \tilde{\vphi} \|_{W^{1,\frac{p}{p-1}}(\Omega_h;\M^N)} \\
&\leq C \, \| W_\xi(\nabla u_g\circ\Phi_g) \|_{L^\infty(\Omega_h;\M^N)} \|\tilde{\vphi}\|_{L^{\frac{p}{p-1}}(\Omega_h)}
 + C \, \big\| \nabla\bigl( W_\xi(\nabla u_g\circ\Phi_g) \tilde{\vphi} \bigr) \big\|_{L^{\frac{p}{p-1}}(\Omega_h)} \\
&\leq C \, \| W_\xi(\nabla u_g\circ\Phi_g) \|_{L^\infty(\Omega_h;\M^N)} \|\tilde{\vphi}\|_{W^{1,\frac{p}{p-1}}(\Omega_h)}
 + C \, \| \tilde{\vphi} \|_{L^q(\Omega_h)} \big\| \nabla \bigl( W_\xi(\nabla u_g\circ\Phi_g) \bigr) \big\|_{L^N(\Omega_h)} \\
&\leq C \Bigl[ \| W_\xi(\nabla u_g\circ\Phi_g) \|_{L^\infty(\Omega_h;\M^N)}
 + \|C_{u_g}\circ\Phi_g \|_{L^\infty(\Omega_h)} \|\nabla^2u_g\circ\Phi_g\|_{L^p(\Omega_h)} \Bigr]
 \|\vphi\|_{W^{\frac1p,\frac{p}{p-1}}(\Gamma_h)}.
\end{align*}
From this estimate, recalling the equiboundedness of $u_g\circ\Phi_g$ in $W^{2,p}(\Omega_h)$,
we obtain that \eqref{claim2lemma} holds
with a constant $C$ depending also on the $C^2$-norm of $W$ on $K$,
where $K$ is the compact subset of $\M^N_+$ given by Remark~\ref{rm:IFT}.

\smallskip
\noindent{\it Step 4.}
We now conclude the proof of the lemma.
For every $\vphi\in W^{\frac1p,\frac{p}{p-1}}_\#(\Gamma_h)$, and for $k=1,\ldots,N$ we have
\begin{align*}
\int_{\Gamma_h} \biggl[ \frac{\partial}{\partial z_k}&\,W(\nabla u) - \Bigl( \frac{\partial}{\partial z_k}\,W(\nabla u_g) \Bigr) \circ\Phi_g  \biggr] \vphi\,d\hn \\
&= \int_{\Gamma_h} \Bigl( W_\xi(\nabla u) - W_\xi(\nabla u_g)\circ\Phi_g \Bigr)
 : \nabla\Bigl(\frac{\partial u}{\partial z_k}\Bigr) \vphi\,d\hn \\
&\hspace{0.5cm}+ \int_{\Gamma_h} W_\xi(\nabla u_g) \circ\Phi_g
 : \Bigl[ \nabla\Bigl(\frac{\partial u}{\partial z_k}\Bigr) - \nabla\Bigl(\frac{\partial u_g}{\partial z_k}\Bigr)\circ\Phi_g \Bigr] \vphi\,d\hn \\
&\leq C \big\| W_\xi(\nabla u) - W_\xi(\nabla u_g) \circ\Phi_g \big\|_{L^\infty(\Gamma_h;\M^N)} \| \vphi \|_{L^{\frac{p}{p-1}}(\Gamma_h)} \\
&\hspace{0.5cm}+ \big\| W_\xi(\nabla u_g\circ\Phi_g) \vphi \big\|_{W^{\frac1p,\frac{p}{p-1}}(\Gamma_h;\M^N)}
 \Big\| \nabla\Bigl(\frac{\partial u}{\partial z_k}\Bigr) - \nabla\Bigl(\frac{\partial u_g}{\partial z_k}\Bigr)\circ\Phi_g \Big\|_{W^{-\frac1p,p}_\#(\Gamma_h;\M^N)},
\end{align*}
where $C$ is a positive constant depending only on the $C^2$-norm of $u$ and on $\hn(\Gamma_h)$.
Hence, since $\sup_{g\in\mathcal{U}_\delta}\big\| W_\xi(\nabla u) - W_\xi(\nabla u_g) \circ\Phi_g \big\|_{L^\infty(\Gamma_h;\M^N)}\to0$ as $\delta\to0$,
recalling \eqref{claim1lemma} and \eqref{claim2lemma} we obtain that
$$
\sup_{g\in\mathcal{U}_\delta}\big\| \nabla (W(\nabla u)) - \nabla (W(\nabla u_g)) \circ\Phi_g \big\|_{W^{-\frac1p,p}_\#(\Gamma_h;\R^N)} \to 0
\qquad\text{as }\delta\to0,
$$
and the conclusion of the lemma follows from Lemma~\ref{lem:sobolev}.
\end{proof}

We can now prove Theorem~\ref{teo:locmin}
by reproducing the strategy of \cite{FM} with easy modifications.
For the sake of completeness and for the reader's convenience we will work out all the details of the proof.

\begin{proof}[Proof of Theorem~\ref{teo:locmin}]
Let $\delta>0$ to be chosen
and consider any $g\in\mathcal{U}_\delta$.
We will denote by $\B_g$ and $H_g$ the second fundamental form and the mean curvature of $\Gamma_g$ respectively,
and by $\Bpsi_g$, $H_g^\psi$ the ``anisotropic versions'' of the same quantities.
We define the bilinear form on $\Htilde(\Gamma_g)$
$$
(\vphi,\vartheta)_{\sim,g} := \int_{\Gamma_g}(\nabla^2\psi\circ\nu_g)[\nabla_{\Gamma_g}\vphi, \nabla_{\Gamma_g}\vartheta]\,d\hn
+\int_{\Gamma_g}a_g\,\vphi\,\vartheta\,d\hn
$$
where $a_g:= \partial_{\nu_g}(W\circ\nabla u_g)-\tr(\Bpsi_g \B_g)$ on $\Gamma_g$,
and we set $\|\vphi\|_{\sim,g}^2:=(\vphi,\vphi)_{\sim,g}$.
We omit the subscript in all the analogous quantities defined on $\Gamma_h$,
according to the notation introduced in Section~\ref{sect:varII}.
We now split the proof into four steps.

\smallskip
\noindent{\it Step 1.}
We start by observing that for every $\vphi\in \Htilde(\Gamma_g)$
\begin{equation} \label{stimap-1}
\int_{\Gamma_h} \bigl( a(\jacg)^2 - (a_g\circ\Phi_g)\jacg \bigr) (\vphi\circ\Phi_g)^2 \,d\hn
\leq c(\delta)\|\vphi\|^2_{H^1(\Gamma_g)}\,,
\end{equation}
where $c(\delta)\to0$ as $\delta\to0$ (independently of $g\in\mathcal{U}_\delta$).
Indeed, by using Lemma~\ref{lem:algebra} and recalling that $\|\jacg-1\|_{L^\infty(\Gamma_h)}\to0$ as $\delta\to0$, we have
\begin{align*}
\int_{\Gamma_h} \bigl( \partial_\nu(W(\nabla u))(\jacg)^2 &- (\partial_{\nu_g}(W(\nabla u_g))\circ\Phi_g)\jacg \bigr) (\vphi\circ\Phi_g)^2 \,d\hn \\
&\leq c'(\delta) \|(\vphi\circ\Phi_g)^2\|_{W^{\frac1p,\frac{p}{p-1}}_\#(\Gamma_h)}
\leq c'(\delta) \|(\vphi\circ\Phi_g)^2\|_{W^{1,\frac{p}{p-1}}(\Gamma_h)} \\
&\leq c''(\delta) \|\vphi\circ\Phi_g\|^2_{H^1(\Gamma_h)}
\leq c'''(\delta)\|\vphi\|^2_{H^1(\Gamma_g)}\,,
\end{align*}
where the third inequality can be deduced by recalling the imbedding of $H^1(\Gamma_h)$ in $L^q(\Gamma_h)$ for every $q$,
which holds in dimension $N\leq3$. Here $c'(\delta),c''(\delta),c'''(\delta)\to0$ as $\delta\to0$,
independently of $g\in\mathcal{U}_\delta$.
Moreover, it is not hard to see that
$$
\sup_{g\in\mathcal{U}_\delta } \big\|\tr(\Bpsi_g\B_g)\circ\Phi_g - \tr(\Bpsi\B)\big\|_{L^{p/2}(\Gamma_h)} \to 0
\quad\text{as }\delta\to0\,,
$$
from which follows by H\"{o}lder inequality
(again using $\|\jacg-1\|_{L^\infty(\Gamma_h)}\to0$)
\begin{align*}
\int_{\Gamma_h} \bigl( \tr(\Bpsi_g\B_g)\circ\Phi_g &- \tr(\Bpsi\B)\jacg \bigr)\jacg (\vphi\circ\Phi_g)^2 \,d\hn \\
&\leq c'(\delta) \|(\vphi\circ\Phi_g)^2\|_{L^{\frac{p}{p-2}}(\Gamma_h)}
= c'(\delta) \|\vphi\circ\Phi_g\|_{L^{\frac{2p}{p-2}}(\Gamma_h)}^2 \\
&\leq c''(\delta) \|\vphi\circ\Phi_g\|^2_{H^1(\Gamma_h)}
\leq c'''(\delta)\|\vphi\|^2_{H^1(\Gamma_g)}\,,
\end{align*}
where the second inequality is justified, as before, by the Sobolev Embedding Theorem.
By combining the previous estimates, \eqref{stimap-1} follows.

\smallskip
\noindent{\it Step 2.}
We claim that if $\delta$ is sufficiently small then for every $g\in\mathcal{U}_\delta$
\begin{equation} \label{equivnorm}
\|\vphi\|^2_{\sim,g} \geq C_1 \|\vphi\|^2_{H^1(\Gamma_g)} \qquad\text{for every }\vphi\in\Htilde(\Gamma_g)
\end{equation}
for some positive constant $C_1$.
To prove \eqref{equivnorm}, we first note that for every $\vartheta\in\Htilde(\Gamma_h)$ one has,
thanks to \eqref{fquadT} and to Corollary~\ref{cor:defpos},
$$
\|\vartheta\|^2_\sim \geq \partial^2F(h,u)[\vartheta] \geq C\|\vartheta\|^2_{H^1(\Gamma_h)}.
$$
For $\vphi\in\Htilde(\Gamma_g)$ we define
$\tilde{\vphi} := (\vphi\circ\Phi_g)\jacg \in\Htilde(\Gamma_h)$;
then, using the area formula we have
\begin{align*}
\|\vphi\|^2_{H^1(\Gamma_g)}
&=  \int_{\Gamma_g} (\vphi^2+|\nabla_{\Gamma_g}\vphi|^2)\,d\hn
= \int_{\Gamma_h} \bigl( (\vphi\circ\Phi_g)^2  +  |(\nabla_{\Gamma_g}\vphi)\circ\Phi_g|^2 \bigr) \jacg \,d\hn \\
&\leq C' \|\tilde{\vphi}\|^2_{H^1(\Gamma_h)} \leq \frac{C'}{C} \,\|\tilde{\vphi}\|^2_\sim
\end{align*}
for some positive constant $C'$ independent of $g\in\mathcal{U}_\delta$.
Now
\begin{align} \label{stimap0}
\|\tilde{\vphi}\|^2_\sim
&= \int_{\Gamma_h} \bigl( a\,\tilde{\vphi}^2
 + (\nabla^2\psi\circ\nu)[\nabla_{\Gamma_h}\tilde{\vphi}, \nabla_{\Gamma_h}\tilde{\vphi}] \, \bigr) \,d\hn \nonumber \\
&= \| \vphi \|^2_{\sim,g}
 + \int_{\Gamma_h} \bigl( a(\jacg)^2-(a_g\circ\Phi_g)\jacg \bigr) (\vphi\circ\Phi_g)^2 \,d\hn \nonumber \\
&\hspace{1cm} + \int_{\Gamma_h}(\nabla^2\psi\circ\nu)[\nabla_{\Gamma_h}\tilde{\vphi}, \nabla_{\Gamma_h}\tilde{\vphi}] \,d\hn \nonumber \\
&\hspace{1cm} - \int_{\Gamma_h}(\nabla^2\psi\circ\nu_g\circ\Phi_g)[(\nabla_{\Gamma_g}\vphi)\circ\Phi_g,(\nabla_{\Gamma_g}\vphi)\circ\Phi_g] \jacg \,d\hn \nonumber\\
&\leq \|\vphi\|^2_{\sim,g} + c(\delta)\|\vphi\|^2_{H^1(\Gamma_g)},
\end{align}
where $c(\delta)$ tends to 0 as $\delta\to0$.
To deduce the last inequality in the previous estimate we used in particular \eqref{stimap-1}
and the fact that $\|\Phi_g-Id\|_{W^{2,p}(\Gamma_h;\R^N)}\to 0$.
Choosing $\delta$ sufficiently small and combining the previous estimates
the claim follows.

\smallskip
\noindent{\it Step 3.}
By Step 2 we can define a compact linear operator $T_g:\Htilde(\Gamma_g)\to\Htilde(\Gamma_g)$ by duality:
\begin{equation} \label{Tg}
(T_g\vphi,\vartheta)_{\sim,g} = \int_{\Gamma_g} \div_{\Gamma_g}(\vartheta \, W_\xi(\nabla u_g)) \cdot \vf \,d\hn
= \int_{\Omega_g} C_{u_g} \nabla \vf : \nabla v_{\vartheta} \,dz
\end{equation}
for every $\vphi,\vartheta\in\Htilde(\Gamma_g)$,
where for $\zeta\in\Htilde(\Gamma_g)$ we denote by $v_\zeta$ the unique solution in $\Vtilde(\Omega_g)$ to the equation
\begin{equation} \label{vfg}
\int_{\Omega_g} C_{u_g} \nabla v_\zeta : \nabla w \,dz = \int_{\Gamma_g}\div_{\Gamma_g}(\zeta \, W_{\xi}(\nabla u_g)) \cdot w \,d\hn
\qquad\text{for every } w\in\Vtilde(\Omega_g).
\end{equation}
Setting, similarly to \eqref{lambda1},
$$
\lambda_{1,g}:= \max_{\|\vphi\|_{\sim,g}=1} (T_g\vphi,\vphi)_{\sim,g},
$$
we claim that
\begin{equation} \label{claimp0}
\lambda_\infty:=\limsup_{\|g-h\|_{W^{2,p}(Q)}\to0} \lambda_{1,g}\leq\lambda_1.
\end{equation}
Indeed, let $(g_n)_n$ be a sequence in $C^{\infty}_\#(Q)$ converging to $h$ in $W^{2,p}(Q)$, $|\Omega_{g_n}|=|\Omega_h|$,
such that
$$
\lambda_\infty=\lim_{n\to+\infty}\lambda_{1,g_n},
$$
and let $u_n$ be the corresponding critical points for the elastic energy in $\Omega_{g_n}$.
Let $\vphi_n\in\Htilde(\Gamma_{g_n})$,
with $\|\vphi_n\|_{\sim,g_n}=1$, be such that
$$
\lambda_{1,g_n} = (T_{g_n}\vphi_n, \vphi_n)_{\sim,g_n} = \int_{\Omega_{g_n}}C_{u_n}\nabla v_{\vphi_n} : \nabla v_{\vphi_n},
$$
where $v_{\vphi_n}$ is defined as in \eqref{vfg}.
We set $\tilde{\vphi}_n := c_n(\vphi_n\circ\Phi_{g_n})\jacn$,
where $c_n := \|(\vphi_n\circ\Phi_{g_n})\jacn\|^{-1}_{\sim}$,
so that $\tilde{\vphi}_n\in\Htilde(\Gamma_h)$ and $\|\tilde{\vphi}_n\|_\sim=1$.
Setting also $w_n:=v_{\vphi_n}\circ\Phi_{g_n}$,
by a change of variables it follows that for every $w\in\Vtilde(\Omega_{g_n})$
\begin{align*}
\int_{\Omega_{g_n}} C_{u_n} \nabla v_{\vphi_n} : \nabla w \,dz
= \int_{\Omega_h} A_n \nabla w_n : \nabla (w\circ\Phi_{g_n}) \,dz,
\end{align*}
where $A_n$ is the fourth order tensor defined by
$$
A_nM =
\Bigl( W_{\xi\xi}\bigl( \nabla(u_n\circ\Phi_{g_n})(\nabla\Phi_{g_n})^{-1} \bigr)
\bigl( M (\nabla\Phi_{g_n})^{-1} \bigr) \Bigr)
(\nabla\Phi_{g_n})^{-T} \det\nabla\Phi_{g_n}
\qquad\text{for }M\in\M^N.
$$
Hence by \eqref{vfg} we see that $w_n\in\Vtilde(\Omega_h)$ solves the equation
\begin{equation} \label{equazionew}
\int_{\Omega_h}A_n\nabla w_n : \nabla w \,dz
= \int_{\Gamma_h} \bigl( \div_{\Gamma_{g_n}}(\vphi_n W_\xi(\nabla u_n)) \circ \Phi_{g_n} \bigr) \cdot w \; \jacn \,d\hn
\end{equation}
for every $w\in\Vtilde(\Omega_{h})$.
Let us observe also that $A_n\to C_u$ uniformly in $\overline{\Omega}_h$.
We now claim that
\begin{equation} \label{claimp1}
\lim_{n\to\infty} \int_{\Omega_h}C_u \nabla v_{\tilde{\vphi}_n} : \nabla v_{\tilde{\vphi}_n} \,dz
= \lim_{n\to\infty} \int_{\Omega_h}A_n \nabla w_n : \nabla w_n \,dz.
\end{equation}
Notice that this implies \eqref{claimp0}, since
\begin{align*}
\lambda_1 &\geq \lim_{n\to\infty} (T\tilde{\vphi}_n,\tilde{\vphi}_n)_\sim
= \lim_{n\to\infty} \int_{\Omega_h} C_u \nabla v_{\tilde{\vphi}_n} : \nabla v_{\tilde{\vphi}_n} \,dz \\
&= \lim_{n\to\infty} \int_{\Omega_h}A_n \nabla w_n : \nabla w_n \,dz
= \lim_{n\to\infty} \int_{\Omega_{g_n}} C_{u_n} \nabla v_{\vphi_n} : \nabla v_{\vphi_n} \,dz =\lambda_{\infty}.
\end{align*}

In order to prove \eqref{claimp1}, we need to deduce some preliminary estimates.
Using the equation satisfied by $v_{\vphi_n}$ and recalling \eqref{provv} we have
\begin{align*}
\frac{c_0}{4} \|v_{\vphi_n}\|^2_{H^1(\Omega_{g_n};\R^N)}
&\leq \int_{\Omega_{g_n}} C_{u_n} \nabla v_{\vphi_n} : \nabla v_{\vphi_n} \,dz
= \int_{\Gamma_{g_n}} \div_{\Gamma_{g_n}} \bigl( \vphi_n W_\xi(\nabla u_n) \bigr) \cdot v_{\vphi_n} \,d\hn \\
&\leq \| \div_{\Gamma_{g_n}} \bigl( \vphi_n W_\xi(\nabla u_n) \bigr) \|_{H^{-\frac12}_\#(\Gamma_{g_n};\R^N)} \|v_{\vphi_n}\|_{H^{\frac12}(\Gamma_{g_n};\R^N)},
\end{align*}
and since the $H^{-\frac12}$-norm in the previous expression is uniformly bounded by Lemma~\ref{lem:gagliardo}
(recall that $\vphi_n$ are uniformly bounded in $H^1(\Gamma_{g_n})$,
and that $W_\xi(\nabla u_n)$ are uniformly bounded in $C^{0,\alpha}(\overline{\Omega}_{g_n};\M^N)$ with $\alpha=1-\frac{N}{p}>\frac12$),
we deduce that
\begin{equation} \label{stimap1}
\sup_n \| v_{\vphi_n} \|_{H^1(\Omega_{g_n};\R^N)} < \infty.
\end{equation}
Moreover we have also
\begin{equation} \label{stimap2}
\sup_n \| w_n \|_{H^1(\Omega_{h};\R^N)} < \infty, \qquad
\sup_n \| v_{\tilde{\vphi}_n} \|_{H^1(\Omega_{h};\R^N)} < \infty,
\end{equation}
where the first estimate follows from \eqref{stimap1}, using the definition of $w_n$.
Finally, arguing as in the proof of the estimate \eqref{stimap0}
with $\tilde{\vphi}$ replaced by $\frac{\tilde{\vphi}_n}{c_n}$
and $\vphi$ replaced by $\vphi_n$, we obtain $c_n \to 1$.

Now we are ready to prove \eqref{claimp1}, from which the conclusion follows.
Observe that, thanks to the uniform bound \eqref{stimap2} and to the uniform convergence of $A_n$ to $C_u$, we have
$$
\lim_{n\to\infty} \int_{\Omega_h}C_u \nabla w_n : \nabla w_n \,dz
= \lim_{n\to\infty} \int_{\Omega_h}A_n \nabla w_n : \nabla w_n \,dz,
$$
thus claim \eqref{claimp1} will follow from
\begin{equation} \label{claimp2}
\lim_{n\to\infty} \int_{\Omega_h} C_u \nabla(v_{\tilde{\vphi}_n}-w_n) : \nabla(v_{\tilde{\vphi}_n}-w_n) \,dz=0,
\end{equation}
since this implies that $v_{\tilde{\vphi}_n}-w_n$ tends to $0$ strongly in $H^1(\Omega_h;\R^N)$.
Hence we are left with the proof of \eqref{claimp2}.

Observe that, as $v_{\tilde{\vphi}_n} - w_n$ is an admissible test function for
both the equations satisfied by $v_{\tilde{\vphi}_n}$ and $w_n$, we have
\begin{align*}
\int_{\Omega_h} C_u &\nabla(v_{\tilde{\vphi}_n}-w_n) : \nabla(v_{\tilde{\vphi}_n}-w_n) \,dz \\
&= \int_{\Omega_h} C_u \nabla v_{\tilde{\vphi}_n} : \nabla(v_{\tilde{\vphi}_n}-w_n) \,dz
- \int_{\Omega_h} (C_u-A_n) \nabla w_n : \nabla(v_{\tilde{\vphi}_n}-w_n) \,dz \\
&\hspace{1cm}- \int_{\Omega_h} A_n \nabla w_n : \nabla(v_{\tilde{\vphi}_n}-w_n) \,dz \\
&= \int_{\Gamma_h} \div_{\Gamma_h} (\tilde{\vphi}_n W_\xi(\nabla u)) \cdot (v_{\tilde{\vphi}_n}-w_n) \,d\hn
- \int_{\Omega_h} (C_u-A_n) \nabla w_n : \nabla(v_{\tilde{\vphi}_n}-w_n) \,dz \\
&\hspace{1cm}- \int_{\Gamma_h} \bigl( \div_{\Gamma_{g_n}}(\vphi_n W_\xi(\nabla u_n))\circ\Phi_{g_n} \bigr) \cdot (v_{\tilde{\vphi}_n}-w_n) \jacn\,d\hn \\
&=: I_1-I_2-I_3.
\end{align*}
It is clear, from the bounds in \eqref{stimap2}
and from the uniform convergence of $A_n$ to $C_u$, that the second integral $I_2$ tends to $0$.
Since, thanks to \eqref{stimap2}, $v_{\tilde{\vphi}_n}-w_n$ is bounded in $H^{\frac12}(\Gamma_h;\R^N)$,
to prove that also the difference $I_1-I_3$ tends to $0$
it will be sufficient to show that
$$
\big\|
\div_{\Gamma_h} (\tilde{\vphi}_n W_\xi(\nabla u))
-
\div_{\Gamma_{g_n}}(\vphi_n W_\xi(\nabla u_n))\circ\Phi_{g_n}
\big\|_{H^{-\frac12}_\#(\Gamma_h;\R^N)}\to0\,.
$$
In turn, by Lemma~\ref{lem:sobolev} the previous convergence will follow from
\begin{equation} \label{stimap3}
\| \tilde{\vphi}_n h_n \|_{H^{\frac12}_\#(\Gamma_h;\M^N)} \to 0,
\end{equation}
where
$$
h_n:= W_\xi(\nabla u)
- c_n^{-1} (\jacn)^{-1} W_\xi(\nabla u_n) \circ\Phi_{g_n}.
$$
Recalling that $c_n\to1$, we have that $h_n\to0$ in $C^{0,\alpha}(\Gamma_{h};\M^N)$
for $\alpha=1-\frac{N}{p}$;
hence by Lemma~\ref{lem:gagliardo} we obtain \eqref{stimap3}, which concludes the proof of Step 3.

\smallskip
\noindent{\it Step 4.}
We define $h_t:=h+t(g-h)$ for $t\in[0,1]$.
Setting $f(t):=F(h_t,u_{h_t})$, we claim that if $\delta$ is sufficiently small then
\begin{equation} \label{claimp3}
f''(t)> 2C_2 \|\vphi_g\|^2_{H^1(\Gamma_g)} \qquad\text{for every }t\in[0,1]
\end{equation}
for some positive constant $C_2$, where $\vphi_g:=\bigl((g-h)/\sqrt{1+|\nabla g|^2}\bigr)\circ\pi$.
In fact, the quantity $f''(t)$ is nothing but the second variation of $F$ at $(h_t,u_{h_t})$ along the direction $g-h$,
hence by Remark~\ref{rm:var2s}
\begin{align} \label{eqp1}
f''(t)
&= -(T_{h_t}\vphi_t,\vphi_t)_{\sim,h_t} + \|\vphi_t\|^2_{\sim,h_t} \nonumber\\
&- \int_{\Gamma_{h_t}} \bigl( W(\nabla u_{h_t}) + H^\psi_{h_t} \bigr) \: \div_{\Gamma_{h_t}} \Biggl[ \biggl( \frac{(\nabla h_t,|\nabla h_t|^2)}{\sqrt{1+|\nabla h_t|^2}}\circ\pi \biggr) \vphi_t^2 \Biggr]\,d\hn,
\end{align}
where $\vphi_t:=\bigl((g-h)/\sqrt{1+|\nabla h_t|^2}\bigr)\circ\pi\in\Htilde(\Gamma_{h_t})$.
Observe that, as $\lambda_1<1$ by Theorem~\ref{teo:condequiv},
combining Step 2 and Step 3 we have that for $\delta$ sufficiently small
\begin{align} \label{eqp2}
-(T_{h_t}\vphi_t,\vphi_t)_{\sim,h_t} &+ \|\vphi_t\|^2_{\sim,h_t}
\geq (1-\lambda_{1,h_t})\|\vphi_t\|^2_{\sim,h_t}
> \frac{1-\lambda_1}{2}\|\vphi_t\|^2_{\sim,h_t} \nonumber\\
&\geq \frac{C_1(1-\lambda_1)}{2}\|\vphi_t\|^2_{H^1(\Gamma_{h_t})}
\geq \frac{C_1(1-\lambda_1)}{4}\|\vphi_g\|^2_{H^1(\Gamma_{g})},
\end{align}
where in the last inequality we used the fact that, for $\delta$ small enough,
\begin{equation} \label{eqp3}
\frac12 \|\vphi_g\|^2_{H^1(\Gamma_g)} \leq \|\vphi_t\|^2_{H^1(\Gamma_{h_t})} \leq 2\|\vphi_g\|^2_{H^1(\Gamma_g)}.
\end{equation}
In addition, as $(h,u)$ is a critical pair,
there exists a constant $\Lambda$ such that $W(\nabla u) + H^\psi\equiv\Lambda$ on $\Gamma_h$, and moreover
\begin{equation} \label{eqp3bis}
\sup_{g\in\mathcal{U}_\delta}\sup_{t\in[0,1]} \bigl\| W(\nabla u_{h_t}) + H^\psi_{h_t}-\Lambda \bigr\|_{L^p(\Gamma_{h_t})}\to0 \qquad\text{as }\delta\to0.
\end{equation}
From this it follows that if $\delta$ is sufficiently small, by H\"{o}lder inequality
\begin{align}\label{eqp4}
- \int_{\Gamma_{h_t}} \bigl( & W(\nabla u_{h_t}) + H^\psi_{h_t} \bigr) \: \div_{\Gamma_{h_t}}
 \biggl[ \Bigl( {\textstyle\frac{(\nabla h_t,|\nabla h_t|^2)}{\sqrt{1+|\nabla h_t|^2}}\circ\pi} \Bigr) \vphi_t^2 \biggr] \,d\hn \nonumber\\
&= - \int_{\Gamma_{h_t}} \bigl( W(\nabla u_{h_t}) + H^\psi_{h_t} - \Lambda \bigr) \: \div_{\Gamma_{h_t}}
 \biggl[ \Bigl( {\textstyle\frac{(\nabla h_t,|\nabla h_t|^2)}{\sqrt{1+|\nabla h_t|^2}}\circ\pi} \Bigr) \vphi_t^2 \biggr] \,d\hn \nonumber\\
& \geq - \bigl\| W(\nabla u_{h_t}) + H^\psi_{h_t}-\Lambda \bigr\|_{L^p(\Gamma_{h_t})} \biggl\{
 \Bigl\| \div_{\Gamma_{h_t}} \Bigl( {\textstyle\frac{(\nabla h_t,|\nabla h_t|^2)}{\sqrt{1+|\nabla h_t|^2}}\circ\pi} \Bigr) \Bigr\|_{L^p(\Gamma_{h_t})}
 \| \vphi_t \|^2_{L^{\frac{2p}{p-2}}(\Gamma_{h_t})} \nonumber\\
&\hspace{2cm} + 2 \bigl\| \nabla_{\Gamma_{h_t}}\vphi_t \bigr\|_{L^2(\Gamma_{h_t};\R^N)}
 \Bigl\| \vphi_t {\textstyle\frac{(\nabla h_t,|\nabla h_t|^2)}{\sqrt{1+|\nabla h_t|^2}}\circ\pi} \Bigr\|_{L^{\frac{2p}{p-2}}(\Gamma_{h_t};\R^N)} \biggr\} \nonumber\\
&\geq - \frac{C_1(1-\lambda_1)}{8}\|\vphi_g\|^2_{H^1(\Gamma_g)},
\end{align}
where in the last inequality we used also
the boundedness of $h_t$ in $W^{2,p}(Q)$, the Sobolev imbedding theorem, \eqref{eqp3bis} and \eqref{eqp3}.

Collecting \eqref{eqp1}, \eqref{eqp2} and \eqref{eqp4} we conclude
that the claim \eqref{claimp3} holds with $C_2= \frac{C_1(1-\lambda_1)}{16}$.

Finally, thank to the fact that $f'(0)=0$ (as $(h,u)$ is a critical pair), we have
\begin{align} \label{eqp5}
F(h,u) = f(0) = f(1)-\int_0^1(1-t)f''(t)\,dt < F(g,u_g) - C_2 \|\vphi_g\|^2_{H^1(\Gamma_g)}.
\end{align}
This inequality is valid for every $g\in\mathcal{U}_\delta$, for a sufficiently small $\delta$.
Now, by an approximation argument,
if $g\in AP(Q)$ is such that $\|g-h\|_{W^{2,p}(Q)}<\delta$ and $|\Omega_g|=|\Omega_h|$,
we set $\tilde{g}:=h+\rho_\varepsilon*(g-h)$,
where $\rho_\varepsilon$ is a standard mollifier with support in $B_\varepsilon(0)$.
Then $\tilde{g}\in\mathcal{U}_\delta$, and $\varepsilon$ can be chosen so small that
$$
F(\tilde{g},u_{\tilde{g}})\leq F(g,u_g) + \frac{C_2}{2} \| \vphi_{\tilde{g}} \|^2_{H^1(\Gamma_{\tilde{g}})},
$$
hence by \eqref{eqp5}
$$
F(h,u) < F(g,u_g) - \frac{C_2}{2} \|\vphi_{\tilde{g}}\|^2_{H^1(\Gamma_{\tilde{g}})}.
$$
Now the minimality with respect to a generic pair $(g,v)$ follows from Proposition~\ref{prop:minEl}.
\end{proof}

\end{section}

%%%%%%%%%%%%%%%%%%%%%%%%%%%%%%%%%%%%%%%%%%%%%%%%%%%%%%%%%%%%%%%%%%%%%%%%%%%%%%%%%%%%%%%%%%%%%%%%%%%%%%%%%%%%%%%%%%%%%%%%%%%%%%%%%%%%%%%%
%%%%%%%%%%%%%%%%%%%%%%%%%%%%%%%%%%%%%%%%%%%%%%%%%%%%%%%%%%%%%%%%%%%%%%%%%%%%%%%%%%%%%%%%%%%%%%%%%%%%%%%%%%%%%%%%%%%%%%%%%%%%%%%%%%%%%%%%
%%%%%%%%%%%%%%%%%%%%%%%%%%%%%%%%%%%%%%%%%%%%%%%%%%%%%%%%%%%%%%%%%%%%%%%%%%%%%%%%%%%%%%%%%%%%%%%%%%%%%%%%%%%%%%%%%%%%%%%%%%%%%%%%%%%%%%%%
%%%%%%%%%%%%%%%%%%%%%%%%%%%%%%%%%%%%%%%%%%%%%%%%%%%%%%%%%%%%%%%%%%%%%%%%%%%%%%%%%%%%%%%%%%%%%%%%%%%%%%%%%%%%%%%%%%%%%%%%%%%%%%%%%%%%%%%%
%%%%%%%%%%%%%%%%%%%%%%%%%%%%%%%%%%%%%%%%%%%%%%%%%%%%%%%%%%%%%%%%%%%%%%%%%%%%%%%%%%%%%%%%%%%%%%%%%%%%%%%%%%%%%%%%%%%%%%%%%%%%%%%%%%%%%%%%

\begin{section}{Strong local minimality} \label{sect:WimpliesL}

In the main result of this section (Theorem~\ref{teo:locmin2})
we prove that the $W^{2,p}$-local minimality (see Definition~\ref{def:locmindeb})
implies local minimality in the stronger sense of Definition~\ref{def:locmin}.
In particular, by Theorem~\ref{teo:locmin} we deduce that the strict stability of a critical pair $(h,u)$
is a sufficient condition for local minimality (Theorem~\ref{cor:locmin}).
We will also observe, in Theorem~\ref{teo:linelas}, that our methods provide the isolated local minimality in the case of the linear elasticity.

The contradiction argument which leads to the proof of these result
is mainly based on the regularity properties of the solutions to suitable penalization problems,
which will turn out to be \emph{quasi-minimizers} of the anisotropic perimeter, according to the following definition.
For every finite-perimeter set $E$ we denote by $\partial^*E$ its reduced boundary and by $\nu_E$ the generalized outer unit normal.

\begin{definition}\label{def:qmin}
A set of finite perimeter $E\subset\R^N$ is an \emph{$(\omega,R)$-minimizer} for the anisotropic perimeter, with $\omega>0$, $R>0$,
if for every ball $B_r(x)$, $0<r<R$, and for every set of finite perimeter $F$
such that $E\bigtriangleup F\subset\subset B_r(x)$ we have
$$
\int_{\partial^*E}\psi(\nu_E)\,d\hn \leq \int_{\partial^*F}\psi(\nu_F)\,d\hn + \omega |E\bigtriangleup F|.
$$
\end{definition}

In this context, we say that a set $E$ is \emph{periodic} if its characteristic function is one-periodic in the first $N-1$ coordinate directions. The following theorem contains the main regularity property of uniform sequences of quasi-minimizers.

\begin{theorem} \label{teo:qminreg}
Let $E_n$ be a sequence of periodic $(\omega,R)$-minimizers of the anisotropic perimeter such that
$\sup_n\mathcal{H}^{N-1}( \partial^* E_n \cap ([0,1)^{N-1}\times\R) ) <\infty$ and
$\chi_{E_n}\to \chi_{E}$ in $L^1_{\rm loc}(\R^N)$, where $E\subset\R^N$ is a periodic set of class $C^2$.
Then, for $n$ sufficiently large $E_n$ is a set of class $C^{1,\frac12}$
and $\partial E_n\to\partial E$ in $C^{1,\alpha}$ for every $\alpha\in(0,\frac12)$, in the sense that
$$
\partial E_n = \{ z+\vphi_n(z)\nu_E(z) : z\in\partial E \}
$$
with $\vphi_n\to0$ in $C^{1,\alpha}(\partial E)$.
\end{theorem}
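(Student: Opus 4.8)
The plan is to adapt to the uniformly convex anisotropy $\psi$ the strategy used for $(\Lambda,r_0)$--minimizers of the isotropic perimeter in \cite{AFM} and \cite{FM}: the point is that the ellipticity \eqref{pospsi} of $\psi$ is exactly what makes the regularity theory for the parametric integrand $E\mapsto\int_{\partial^*E}\psi(\nu_E)\,d\hn$ available (uniform density estimates, the $\e$--regularity/excess--decay theorem, and the smallness of the singular set), see \cite{Bon} and the references therein, as well as the classical work of Almgren and Schoen--Simon on elliptic parametric integrands and Tamanini's regularity theory for almost--minimizers. I would proceed in three steps: first derive uniform density estimates and the local Hausdorff convergence $\partial E_n\to\partial E$; then show that the anisotropic perimeters do not lose mass in the limit, so that the excess of $E_n$ at small scales is uniformly small; and finally apply $\e$--regularity at each point of $\partial E$ and globalise.

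\emph{Step 1.} Comparing $E_n$ with $E_n\setmeno B_r(z)$ and with $E_n\cup B_r(z)$, and using \eqref{boundpsi} together with the $(\omega,R)$--minimality, one gets perimeter and volume density estimates with constants depending only on $\omega,R,m,M$: there are $c_*>0$ and $r_*\in(0,R)$ such that, for $n$ large, every $z\in\partial E_n$ and every $r\in(0,r_*)$, $c_*r^{N-1}\le\hn(\partial^*E_n\cap B_r(z))\le c_*^{-1}r^{N-1}$ and $c_*\le|E_n\cap B_r(z)|/|B_r|\le1-c_*$ (the hypothesis $\sup_n\hn(\partial^*E_n\cap([0,1)^{N-1}{\times}\R))<\infty$ is used here to ensure these bounds hold up to the boundary of a period and that $\partial E_n$ does not escape). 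Applying the lower volume bound to $E_n$ and to its complement, and using $\chi_{E_n}\to\chi_E$ in $L^1_{\rm loc}$ together with the fact that $\partial E$ is, modulo periodicity, a compact $C^2$ hypersurface, I would conclude that $\sup\{\mathrm{dist}(z,\partial E):z\in\partial E_n\}\to0$ and $\sup\{\mathrm{dist}(z,\partial E_n):z\in\partial E\}\to0$.

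\emph{Step 2.} Lower semicontinuity gives $\int_{\partial^*E\cap A}\psi(\nu_E)\,d\hn\le\liminf_n\int_{\partial^*E_n\cap A}\psi(\nu_{E_n})\,d\hn$ for every bounded open $A$; for the reverse inequality one fixes $B_r(z)$ and, for a.e.\ $r'<r$, compares $E_n$ with the competitor $F_n:=(E_n\setmeno B_{r'}(z))\cup(E\cap B_{r'}(z))$, for which $E_n\bigtriangleup F_n\subset\subset B_r(z)$, obtaining from the $(\omega,R)$--minimality and \eqref{boundpsi} the bound $\int_{\partial^*E_n\cap B_{r'}(z)}\psi(\nu_{E_n})\,d\hn\le\int_{\partial^*E\cap B_r(z)}\psi(\nu_E)\,d\hn+M\,\hn(\partial B_{r'}(z)\cap(E_n\bigtriangleup E))+\omega|E_n\bigtriangleup E|$; letting $n\to\infty$ (the last two terms vanish for a.e.\ $r'$) and then $r'\uparrow r$ shows that the anisotropic perimeters converge, hence (again by \eqref{boundpsi}) $\hn$ restricted to $\partial^*E_n$ converges weakly-$*$ to $\hn$ restricted to $\partial^*E$. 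Since also the vector perimeters $-\int_{B_r(z)}D\chi_{E_n}$ converge (for a.e.\ $r$) to those of $E$, the excess of $E_n$ in $B_r(z)$ with respect to a given direction converges to that of $E$; as $\partial E$ is of class $C^2$ its excess at any point at scale $\rho$ is $O(\rho^2)$, and combining this with the Hausdorff convergence of Step 1 and a routine covering argument I would show that for every $\e>0$ there are $\rho>0$ and $n_\e$ such that, for $n\ge n_\e$, the excess of $E_n$ at every $z\in\partial E_n$ at scale $\rho$ (with respect to $\nu_E$ at the point of $\partial E$ closest to $z$) is smaller than $\e$.

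\emph{Step 3.} By the $\e$--regularity theorem for $(\omega,R)$--minimizers of the uniformly convex anisotropic perimeter there is $\e_0>0$ (and a threshold on $\omega\rho$) such that, whenever the excess of $E_n$ at $z\in\partial E_n$ at scale $\rho<r_*$ is below $\e_0$, the portion $\partial E_n\cap B_{\rho/2}(z)$ is, in suitable coordinates, the graph of a $C^{1,1/2}$ function with $C^{1,1/2}$--norm controlled by the structural data alone. Taking $\e=\e_0$ in Step 2 and covering one period of $\partial E$ by finitely many balls of this kind, it follows that for $n$ large $E_n$ is of class $C^{1,\frac12}$ — in particular it has no singular points, since $\partial E_n$ lies entirely in a fixed tubular neighbourhood of $\partial E$ — with local graphs uniformly bounded in $C^{1,\frac12}$; having small slope relative to $\partial E$, $\partial E_n$ can then be written as a normal graph $\partial E_n=\{z+\vphi_n(z)\nu_E(z):z\in\partial E\}$ with $\sup_n\|\vphi_n\|_{C^{1,1/2}(\partial E)}<\infty$, and since $\vphi_n\to0$ uniformly by Step 1, the compactness of the embedding $C^{1,\frac12}\hookrightarrow C^{1,\alpha}$ for $\alpha<\tfrac12$ forces $\vphi_n\to0$ in $C^{1,\alpha}(\partial E)$ for every $\alpha\in(0,\tfrac12)$, which is the assertion. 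The hardest ingredient is the $\e$--regularity theorem for the elliptic integrand $\psi$ invoked here — subtle for general anisotropies, and available precisely thanks to \eqref{pospsi} — whereas the other two steps are comparatively soft, the only genuine use of the $(\omega,R)$--minimality of the whole sequence being the no--loss--of--perimeter property of Step 2.
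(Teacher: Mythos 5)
Your proposal is correct and is essentially the argument the paper itself invokes: the paper gives no self-contained proof of Theorem~\ref{teo:qminreg} but delegates it to the regularity theory for almost-minimizers of elliptic integrands and to the ``well-known to specialists'' scheme in the proof of \cite[Theorem~8]{FigMag} (see also \cite[Lemma~3.6]{CicLeo}), which is precisely your three-step outline (density estimates and Hausdorff convergence, no loss of perimeter and small excess, $\e$-regularity plus compactness of $C^{1,\frac12}\hookrightarrow C^{1,\alpha}$). You correctly identify the $\e$-regularity theorem for uniformly convex anisotropies as the one genuinely deep input, which is exactly what the paper's citations to Almgren, Bombieri and Schoen--Simon supply.
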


The previous result is a consequence of the standard regularity theory for \emph{almost-minimal currents} (see \cite{Alm,Bom,SSim}).
Precisely, it can be deduced from the result stated in \cite[Theorem~15]{FigMag}
by an argument which is well-known to specialists and can be found, for instance, in the proof of \cite[Theorem~8]{FigMag}
(see also \cite[Lemma~3.6]{CicLeo} for the isotropic case).
Notice that the quasi-minimality property considered in \cite{FigMag}, namely
$$
\int_{\partial^*E}\psi(\nu_E)\,d\hn \leq \int_{\partial^*F}\psi(\nu_F)\,d\hn + \omega r \p(E\bigtriangleup F)
$$
whenever $E\bigtriangleup F$ is compactly contained in a ball of radius $r$,
is clearly implied by our definition of quasi-minimality as a consequence of the isoperimetric inequality.

Another preliminary result that we will need in this section is the following lemma,
which can be proved by standard elliptic estimates.

\begin{lemma} \label{lem:ellipticest}
Let $h\in C^2_\#(Q)$, and let $h_n\in C^{1,\alpha}_\#(Q)$ be such that $h_n \to h$ in $C^{1,\alpha}$,
for some $\alpha\in(0,1)$. Assume also that the anisotropic mean curvature $H^\psi_{h_n}$ of $h_n$ is bounded.
Then
\begin{itemize}
  \item [(i)] if $H^{\psi}_{h_n}(\cdot,h_n(\cdot)) \to H^\psi(\cdot,h(\cdot))$ in $L^p(Q)$,
  then $h_n\to h$ in $W^{2,p}(Q)$;
  \item [(ii)] if $\sup_n \|H^{\psi}_{h_n}\|_{L^p(Q)}<\infty$,
  then $\sup_n\|h_n\|_{W^{2,p}(Q)}<\infty$.
\end{itemize}
\end{lemma}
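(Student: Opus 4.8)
The plan is to read off from the explicit formula \eqref{Hpsi} that, along the graph, the anisotropic mean curvature of a profile is a \emph{linear} second-order elliptic operator acting on it. Differentiating \eqref{Hpsi} by the chain rule (recalling that the last slot of $\psi$ is frozen equal to $1$), one obtains
\begin{equation*}
-\sum_{i,j=1}^{N-1} a^n_{ij}(x)\,\frac{\partial^2 h_n}{\partial x_i\partial x_j}(x) = H^\psi_{h_n}(x,h_n(x)),\qquad a^n_{ij}(x):=\frac{\partial^2\psi}{\partial z_i\partial z_j}\bigl(-\nabla h_n(x),1\bigr),
\end{equation*}
and the analogous identity for $h$ with coefficients $a_{ij}(x):=\partial^2_{z_iz_j}\psi(-\nabla h(x),1)$. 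Since $h_n\to h$ in $C^{1,\alpha}$, the gradients $\nabla h_n$ range in a fixed compact set and are equi-H{\"o}lder continuous; as $\psi\in C^3$ away from the origin and $(-p,1)\neq0$ for every $p$, the $a^n_{ij}$ are then equibounded, share a common modulus of continuity, and converge uniformly to $a_{ij}$. First I would check that the matrices $a^n(x)$ are moreover \emph{uniformly} positive definite: given $\xi\in\R^{N-1}\setmeno\{0\}$, set $v:=(-\nabla h_n(x),1)$ and $\eta:=(\xi,0)\in\R^N$, and write $\eta=\eta^\perp+\eta^\parallel$ with $\eta^\parallel$ parallel to $v$. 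Since $\eta$ has vanishing last component while $v$ does not, $\eta^\perp\neq0$, and using \eqref{hesspsi}, the $(-1)$-homogeneity of $\nabla^2\psi$ and the uniform convexity \eqref{pospsi} one gets
\begin{equation*}
a^n(x)\xi\cdot\xi=\nabla^2\psi(v)[\eta,\eta]=\nabla^2\psi(v)[\eta^\perp,\eta^\perp]\ge\frac{\bar c\,|\eta^\perp|^2}{|v|}\ge\frac{\bar c}{(1+|\nabla h_n(x)|^2)^{3/2}}\,|\xi|^2,
\end{equation*}
a lower bound uniform in $n$ and $x$ thanks to the uniform bound on $\|\nabla h_n\|_\infty$.

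With uniform ellipticity and a uniform modulus of continuity of the coefficients in hand, part (ii) becomes a direct application of the interior $L^p$ estimates for linear elliptic equations in non-divergence form with continuous coefficients (freezing the coefficients and using the Calder{\'o}n--Zygmund theory; cf.\ \cite{ADN}). The boundedness of $H^\psi_{h_n}$ guarantees that the curvature equation holds and, by elliptic regularity, that $h_n\in W^{2,p}_{loc}$; moreover, since $h_n$ is one-periodic there is no genuine boundary, and covering $\overline Q$ by finitely many interior balls one arrives at
\begin{equation*}
\|h_n\|_{W^{2,p}(Q)}\le C\bigl(\|h_n\|_{L^p(Q)}+\|H^\psi_{h_n}\|_{L^p(Q)}\bigr),
\end{equation*}
with $C$ depending only on the common ellipticity constant and modulus of continuity of the $a^n$, hence independent of $n$. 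As $h_n\to h$ in $C^{1,\alpha}$ forces $\sup_n\|h_n\|_{L^p(Q)}<\infty$, the assumption $\sup_n\|H^\psi_{h_n}\|_{L^p(Q)}<\infty$ gives $\sup_n\|h_n\|_{W^{2,p}(Q)}<\infty$.

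For part (i) I would run the same estimate on the difference $w_n:=h_n-h$. Subtracting the curvature equations for $h_n$ and $h$,
\begin{equation*}
-\sum_{i,j=1}^{N-1} a^n_{ij}\,\frac{\partial^2 w_n}{\partial x_i\partial x_j}=\bigl(H^\psi_{h_n}(\cdot,h_n(\cdot))-H^\psi(\cdot,h(\cdot))\bigr)+\sum_{i,j=1}^{N-1}\bigl(a^n_{ij}-a_{ij}\bigr)\frac{\partial^2 h}{\partial x_i\partial x_j}=:g_n .
\end{equation*}
The first term on the right tends to $0$ in $L^p(Q)$ by assumption, and the second tends to $0$ uniformly on $Q$ (hence in $L^p(Q)$) because $a^n\to a$ uniformly and $h\in C^2_\#(Q)$; thus $g_n\to0$ in $L^p(Q)$. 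Applying the uniform estimate of part (ii) to the periodic function $w_n$ yields $\|w_n\|_{W^{2,p}(Q)}\le C(\|w_n\|_{L^p(Q)}+\|g_n\|_{L^p(Q)})\to0$, since also $w_n\to0$ uniformly; this is the desired conclusion.

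I expect the only point that is not entirely routine to be the verification of the uniform positive-definiteness of the matrices $a^n$ starting from the convexity assumption \eqref{pospsi}, which is phrased only for unit vectors and only on the orthogonal complement of $v$; once this (and the uniform modulus of continuity, immediate from $h_n\to h$ in $C^{1,\alpha}$) is in place, everything reduces to the standard uniform Calder{\'o}n--Zygmund $L^p$ estimate for elliptic equations.
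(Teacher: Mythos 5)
Your proposal is correct and follows essentially the same route as the paper: rewrite the anisotropic mean curvature as a non-divergence-form second-order elliptic operator with continuous (here uniformly convergent) coefficients, first using the boundedness of $H^\psi_{h_n}$ and the weak formulation to justify that $h_n$ has enough regularity for the pointwise equation, and then apply the standard interior $L^p$ (Calder\'on--Zygmund) estimates, which the paper delegates to \cite[Theorem~9.11]{GT}. The extra details you supply --- the verification of uniform ellipticity of $a^n_{ij}$ from \eqref{pospsi}, \eqref{hesspsi} and the $(-1)$-homogeneity of $\nabla^2\psi$, and the subtraction argument for part (i) --- are exactly the steps the paper leaves implicit, and they are carried out correctly.
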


\begin{proof}
The function $h_n$ is a weak solution to the equation
$$
- \int_Q \nabla\psi(-\nabla h_n,1) \cdot (\nabla\eta,0) \,dx = \int_Q H^\psi_{h_n}(x,h_n(x)) \eta(x) \,dx
\qquad\text{for all } \eta\in C^\infty_\#(Q)
$$
with $H^{\psi}_{h_n}(\cdot,h_n(\cdot))\in L^\infty(Q)$,
which implies, by elliptic regularity (see, \textit{e.g.}, \cite[Proposition~7.56]{AFP}), that $h_n\in W^{2,2}_\#(Q)$.
Hence it makes sense to perform the differentiation and rewrite the equation in non-divergence form:
$$
\sum_{i,j=1}^{N-1} \frac{\partial^2\psi}{\partial z_i\partial z_j}(-\nabla h_n(x),1) \frac{\partial^2 h_n}{\partial x_i \partial x_j}(x) = - H^\psi_{h_n}(x,h_n(x)) \qquad\text{a.e. in }Q.
$$
By elliptic regularity results for equations in non-divergence form with continuous coefficients,
we deduce that $h_n\in W^{2,p}_\#(Q)$ for every $p\in[1,\infty)$ (see \cite[Theorem~7.48]{AFP}),
and in turn the conclusion follows from \cite[Theorem~9.11]{GT}
recalling that $h_n\to h$ in $C^{1,\alpha}$.
\end{proof}

We recall that we associated, with a critical pair $(h,u)$, an open set $\Omega'$ containing $\Omega_h$
in terms of which we defined in \eqref{spaziocompet} the class of competitors $X'$.
Our strategy requires now the extension of the functional $F$ to a larger class of admissible pairs:
in particular, we shall consider not just subgraphs of Lipschitz functions, but generic periodic sets with locally finite perimeter.
More precisely, let $\widetilde{X}$ be the set of all pairs $(\Omega,v)$ such that:
\begin{itemize}
  \item $\Omega\subset\Omega'$ is a set of finite perimeter; we will denote by $\Omega^\#$ the periodic extension of $\Omega\cup(Q\times\R^-)$ in the first $N-1$ directions;
  \item $v\in W^{1,\infty}(\Omega'_\#;\R^N)$ is such that $v-u_0\in\mathcal{V}(\Omega')$ and $\det\nabla v>0$ a.e. in $\Omega$.
\end{itemize}
For $(\Omega,v)\in\widetilde{X}$ we define
$$
\widetilde{F}(\Omega,v) := \int_\Omega W(\nabla v)\,dz + \int_{\Gamma_\Omega} \psi(\nu_\Omega)\,d\hn
$$
where $\Gamma_\Omega := \partial^*\Omega^\# \cap \bigl([0,1)^{N-1}\times\R\bigr)$ and $\nu_\Omega$ is the generalized outer unit normal to the reduced boundary of $\Omega^\#$.
We remark that, if $(g,v)\in X'$, then $(\Omega_g,v)\in\widetilde{X}$
and $\widetilde{F}(\Omega_g,v)=F(g,v)$.

We are now ready to state and prove the main result of this section.

\begin{theorem} \label{teo:locmin2}
Let $p\in(1,\infty)$, and assume that a critical pair $(h,u)\in X$ is a $W^{2,p}$-local minimizer,
in the sense of Definition~\ref{def:locmindeb}.
Then $(h,u)$ is a local minimizer for $F$, according to Definition~\ref{def:locmin}.
\end{theorem}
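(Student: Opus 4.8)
The plan is to argue by contradiction, following the penalization strategy of \cite{FM} (see also \cite[Theorem~5.1]{CMM} and \cite{AFM}). Suppose $(h,u)$ were not a local minimizer in the sense of Definition~\ref{def:locmin}: then there would exist $\delta_n\to0$ and pairs $(g_n,v_n)\in X'$ with $\|g_n-h\|_\infty<\delta_n$, $|\Omega_{g_n}|=|\Omega_h|$, $\|\nabla v_n-\nabla u\|_{L^\infty(\Omega')}<\delta_n$ and $F(g_n,v_n)<F(h,u)$. Fix a small $\delta_0>0$, to be chosen at the end, and consider for each $n$ a penalized minimum problem on the extended class $\widetilde X$,
\[
\min\Bigl\{\,\widetilde F(\Omega,v)+\Lambda\bigl|\,|\Omega|-|\Omega_h|\,\bigr|+(\text{localization penalty})\ :\ (\Omega,v)\in\widetilde X,\ \|\nabla v-\nabla u\|_{L^\infty(\Omega')}\le\delta_0\,\Bigr\},
\]
where $\Lambda$ exceeds the Lagrange multiplier of the volume constraint, and the localization penalty (e.g.\ of the type $|\Omega\bigtriangleup\Omega_{g_n}|$ with a suitable weight, or an obstacle-type restriction confining $\Omega$ to a shrinking neighbourhood of $\Gamma_h$) is designed so as to vanish at $(\Omega_{g_n},v_n)$ and to drive the minimizers to $\Omega_h$ in $L^1$. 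By the direct method — using that the admissible deformations have equibounded Lipschitz constants, the lower semicontinuity of the non-convex elastic term being dealt with as in \cite{FM} — a minimizer $(k_n,w_n)$ exists. Since $(\Omega_{g_n},v_n)$ is admissible, $\widetilde F(k_n,w_n)\le F(g_n,v_n)<F(h,u)$; moreover $|k_n|=|\Omega_h|$ (by the choice of $\Lambda$), $\chi_{k_n}\to\chi_{\Omega_h}$ in $L^1$, and, for the set $k_n$ fixed, $w_n$ minimizes $v\mapsto\int_{k_n}W(\nabla v)\,dz$ among the admissible deformations.

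The crucial observation is that $k_n$ is a uniform quasi-minimizer of the anisotropic perimeter. If $G$ is a finite-perimeter set with $G\bigtriangleup k_n\subset\subset B_r(x)$, then $(G,w_n)$ is again admissible (recall $w_n$ is defined on all of $\Omega'$); since $\|\nabla w_n\|_{L^\infty(\Omega')}$ is bounded uniformly in $n$, the elastic energy changes by at most $\|W(\nabla w_n)\|_{L^\infty(\Omega')}\,|G\bigtriangleup k_n|$, and the volume and localization penalties by at most $C\,|G\bigtriangleup k_n|$. Comparing energies, the anisotropic perimeter of $k_n$ exceeds that of any such $G$ by at most $\omega\,|G\bigtriangleup k_n|$ with $\omega$ independent of $n$, so $k_n$ is an $(\omega,R)$-minimizer in the sense of Definition~\ref{def:qmin} with $\omega,R$ uniform, and the anisotropic perimeters are equibounded since $\widetilde F(k_n,w_n)<F(h,u)$. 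As $\Omega_h$ is of class $C^2$ and $\chi_{k_n}\to\chi_{\Omega_h}$, Theorem~\ref{teo:qminreg} yields that, for $n$ large, $k_n=\Omega_{k_n}$ is the subgraph of a function $k_n\in C^{1,\alpha}_\#(Q)$ with $k_n\to h$ in $C^{1,\alpha}$. Standard elliptic estimates for the elastic system on domains converging in $C^{1,\alpha}$ then give $w_n\to u$ (say, in $C^1$), whence $\|\nabla w_n-\nabla u\|_{L^\infty}\to0$, the gradient constraint becomes inactive, and $w_n$ is an actual critical point of the elastic energy in $\Omega_{k_n}$.

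Finally, one writes the Euler--Lagrange equation of the penalized problem under normal variations of the profile: on $\Gamma_{k_n}$,
\[
W(\nabla w_n)+H^\psi_{k_n}=\mu_n+(\text{first variation of the localization penalty}),
\]
with $\mu_n$ the bounded Lagrange multiplier of the volume constraint. One checks that $\mu_n$ converges to the constant in the criticality condition \eqref{critpair} for $(h,u)$ and — by the design of the localization penalty — that its first variation at $(k_n,w_n)$ tends to $0$ in $L^p(\Gamma_{k_n})$; together with $w_n\to u$ this gives $H^\psi_{k_n}(\cdot,k_n(\cdot))\to H^\psi(\cdot,h(\cdot))$ in $L^p(Q)$, and Lemma~\ref{lem:ellipticest}(i) upgrades this to $k_n\to h$ in $W^{2,p}(Q)$. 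Hence, for $n$ large, $(k_n,w_n)\in X$ satisfies $|\Omega_{k_n}|=|\Omega_h|$, $\|\nabla w_n-\nabla u\|_{L^\infty}<\delta$ and $0<\|k_n-h\|_{W^{2,p}(Q)}<\delta$ — the case $k_n=h$ being excluded because $\widetilde F(k_n,w_n)<F(h,u)$ would then force $\int_{\Omega_h}W(\nabla w_n)\,dz<\int_{\Omega_h}W(\nabla u)\,dz$, contradicting Proposition~\ref{prop:minEl} — and $F(k_n,w_n)=\widetilde F(k_n,w_n)<F(h,u)$, against the assumed $W^{2,p}$-local minimality \eqref{locminw} of $(h,u)$. (In the last step one also uses the elementary fact, itself a consequence of Lemma~\ref{lem:ellipticest}(i)--(ii), that an $L^\infty$-close, volume-constrained critical pair with equibounded anisotropic curvature is $W^{2,p}$-close to $(h,u)$; this is how $\delta_0$ is fixed.)

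The main obstacle is the design of the penalized problem so that all of the above hold at once: existence of minimizers in spite of the non-convexity of $W$, the $L^1$-convergence of $k_n$ to $\Omega_h$, and — most delicate — the vanishing in the limit of the localization penalty's contribution to the Euler--Lagrange equation, which is precisely what makes Lemma~\ref{lem:ellipticest}(i) applicable. The quasi-minimality estimate is where the $L^\infty$-bound on the deformation gradients is used in an essential way, allowing the elastic energy to be treated as a volume perturbation of the anisotropic perimeter; it is the absence of a regularity theory for minimizers in three dimensions that prevents removing this $L^\infty$-closeness at this stage.
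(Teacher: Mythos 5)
Your skeleton coincides with the paper's: contradiction, penalized minimum problems, uniform quasi-minimality of the minimizers, $C^{1,\alpha}$-regularity and convergence via Theorem~\ref{teo:qminreg}, Euler--Lagrange equations plus Lemma~\ref{lem:ellipticest} to upgrade to $W^{2,p}$, and a final contradiction with Definition~\ref{def:locmindeb}. However, the two points you yourself flag as delicate are genuine gaps, and the paper's proof consists precisely in resolving them. First, existence of minimizers for the penalized problem: an equibounded Lipschitz constant only gives weak-$*$ $W^{1,\infty}$ compactness of the deformations, under which $\int_\Omega W(\nabla v)\,dz$ is \emph{not} lower semicontinuous for a general nonconvex $W$; the argument ``as in \cite{FM}'' relies on the convexity of the linearized density. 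The paper's device is a preliminary mollification step producing smooth competitors $v_n$ with $M_n:=\|\nabla^2 v_n\|_\infty<\infty$ and still $F(g_n,v_n)<F(h,u)$, followed by the additional constraint $\|\nabla^2 v\|_\infty\le M_n$ in the penalized problem, so that minimizing sequences of deformations converge in $C^1$ and the elastic term passes to the limit. Without some such device your minimizer $(k_n,w_n)$ need not exist.

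Second, the design of the localization. It matters because it enters the Euler--Lagrange equation, and you need $H^\psi_{k_n}(\cdot,k_n(\cdot))\to H^\psi(\cdot,h(\cdot))$ in $L^p(Q)$, not merely an $L^\infty$ bound on the curvatures. A penalty of the form $c_n|\Omega\bigtriangleup\Omega_{g_n}|$ contributes a term of modulus comparable to $c_n$ to the curvature equation, and there is a tension you do not resolve: $c_n$ must be large enough to force $\chi_{k_n}\to\chi_{\Omega_h}$ in $L^1$, yet must vanish for the curvatures to converge. The paper instead uses the two-sided obstacle $\Omega_{h-\delta_n}\subset\Omega\subset\Omega_{h+\delta_n}$ (converted into the penalization $2\beta\,|\Omega\bigtriangleup T_n(\Omega)|$ so that arbitrary local competitors remain admissible in the quasi-minimality estimate): on the contact set $\{|k_n-h|=\delta_n\}$ the profile is a vertical translate of $h$, whence its anisotropic curvature there is identified by the criticality condition \eqref{critpair}, while on $\{|k_n-h|<\delta_n\}$ the obstacle contributes nothing; matching the Lagrange multipliers then yields the $L^p$-convergence of the curvatures. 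A further (related) gap: you fix the gradient constraint at a level $\delta_0$ and then recover $\nabla w_n\to\nabla u$ from ``standard elliptic estimates'', but $w_n$ is a constrained minimizer of a nonconvex energy, not the unique solution of a well-posed elliptic system, so no such estimate is available in the nonlinear setting (this is exactly the missing regularity theory the paper laments). The paper keeps the constraint $\|\nabla v-\nabla u\|_{L^\infty}\le\delta_n$ with $\delta_n\to0$, so the gradient convergence is built in. Finally, your exclusion of the case $k_n=h$ via Proposition~\ref{prop:minEl} uses condition \eqref{c0}, which is not among the hypotheses of the theorem as stated.
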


\begin{proof}
We argue by contradiction, assuming the existence of a decreasing sequence $\sigma_n\to 0$
and of a sequence $(g_n,u_n)\in X'$ such that
$$0<\|g_n-h\|_{\infty}\leq\sigma_n, \quad
\|\nabla u_n - \nabla u\|_{L^\infty(\Omega';\M^{N})}\leq\sigma_n, \quad
|\Omega_{g_n}|=|\Omega_h|,
$$
and
\begin{equation} \label{ipotesiassurdo}
F(g_n,u_n) < F(h,u).
\end{equation}
We now split the proof into several steps.

\smallskip
\noindent
{\it Step 1.}
We claim that we can find new sequences $\delta_n\to0$ and $v_n\in C^{\infty}(\overline{\Omega}';\R^N)$ such that
$(g_n,v_n)\in X'$,
$\|g_n-h\|_{{\infty}}\leq\delta_n$,
$\|\nabla v_n - \nabla u\|_{L^\infty(\Omega';\M^{N})} \leq \delta_n$,
and for which we still have
\begin{equation} \label{ipotesiassurdo1}
F(g_n,v_n) < F(h,u).
\end{equation}
Indeed, for every $n$ we can construct an approximating sequence $v_n^k$, $k\in\N$,
in the following way: we let $\rho_{1/k}$ be the standard mollifier in $\R^N$
with support compactly contained in $B_{1/k}$, and we set
$$
v_n^k:=w_n^k\ast\rho_{1/k} + u_0,
\qquad\text{where}\quad
w_n^k(x,y):=
\left\{
  \begin{array}{ll}
    (u_n-u_0)(x,y-1/k) & \hbox{if }y\geq0, \\
    0 & \hbox{if }y<0
  \end{array}
\right.
$$
(where we extended $u_n-u_0$ to 0 in $\R^N_-$).
Then by the properties of the convolution product we have $v_n^k\in C^\infty(\overline{\Omega}';\R^N)$,
$v_n^k-u_0\in\mathcal{V}(\Omega')$, and
$$
\|\nabla v_n^k - \nabla u\|_{L^\infty(\Omega';\M^N)}\leq2\sigma_n
$$
for every $k$ sufficiently large.
Moreover, $F(g_n,v_n^k)\to F(g_n,u_n)$ as $k\to\infty$
by the Lebesgue Dominated Convergence Theorem.
Hence, for every $n$ we can find $k_n$ such that the function $v_n:=v_n^{k_n}$ satisfies the desired properties with $\delta_n=2\sigma_n$.
We set $M_n:= \| \nabla^2 v_n \|_{\infty}$.

\smallskip
\noindent
{\it Step 2.}
Let $(\Omega_n,w_n)\in\widetilde{X}$ be a solution to the penalized problem
\begin{align} \label{penalizzato1}
\min \Bigl\{ J_{\beta}(\Omega,v) : (\Omega,v)\in\widetilde{X}, \;
\Omega_{h-\delta_n}\subset\Omega\subset\Omega_{h+\delta_n}, \;
& v\in W^{2,\infty}(\Omega';\R^N), \nonumber\\
\|\nabla^2v\|_{\infty}\leq M_n, \;
& \|\nabla v - \nabla u\|_{L^\infty(\Omega';\M^{N})}\leq\delta_n \Bigr\},
\end{align}
where
\begin{equation*}
J_{\beta}(\Omega,v):= \widetilde{F}(\Omega,v) + \beta \big| |\Omega|-|\Omega_h| \big|
\end{equation*}
and $\beta$ is a positive constant, to be chosen later.
Observe that problem \eqref{penalizzato1} admits a solution by the direct method of the Calculus of Variations:
indeed, if $(\Omega^k,w^k)$ is a minimizing sequence, then up to subsequences we have that $\Omega^k\to\Omega^0$ in $L^1$
and $w^k\to w^0$ weakly* in $W^{2,\infty}(\Omega')$; the pair $(\Omega^0,w^0)$ satisfies all the constraints
and is a minimizer of \eqref{penalizzato1} by the lower semicontinuity of the functional
(which follows in particular from Reshetnyak's Lower Semicontinuity Theorem, as stated in \cite[Theorem~2.38]{AFP}, for the surface term).

Since $(\Omega_{g_n},v_n)$ is an admissible competitor for \eqref{penalizzato1},
the minimality of $(\Omega_n,w_n)$ and \eqref{ipotesiassurdo1} yield
\begin{equation} \label{contradiction0}
\widetilde{F}(\Omega_n,w_n)\leq J_{\beta}(\Omega_n,w_n) \leq J_{\beta}(\Omega_{g_n},v_n) = F(g_n,v_n) < F(h,u).
\end{equation}

\smallskip
\noindent
{\it Step 3.}
We claim that, for $\beta$ large enough (independently of $n$),
$(\Omega_n,w_n)$ is also a solution to the minimum problem
\begin{align} \label{penalizzato2}
\min \Bigl\{ \widetilde{J}_{\beta}(\Omega,v) :
(\Omega,v)\in\widetilde{X}, \;
v\in W^{2,\infty}(\Omega';\R^N), \;
\|\nabla^2v\|_{\infty}\leq M_n, \;
\|\nabla v - \nabla u\|_{L^\infty(\Omega';\M^{N})}\leq\delta_n \Bigr\},
\end{align}
where
\begin{equation*}
\widetilde{J}_{\beta} (\Omega,v) := J_{\beta}(\Omega,v) + 2 \beta \, |\Omega \bigtriangleup T_n(\Omega)|
\end{equation*}
and $T_n(\Omega):= \bigl( \Omega\cup\Omega_{h-\delta_n} \bigr) \cap \Omega_{h+\delta_n}$.

To prove the claim, consider any competitor $(\Omega,v)$ for problem \eqref{penalizzato2}.
Then we have, since $T_n(\Omega_n)=\Omega_n$,
\begin{align*}
\widetilde{J}_{\beta} (\Omega,v) &- \widetilde{J}_{\beta} (\Omega_n,w_n)
= J_{\beta}(T_n(\Omega),v)  - J_{\beta}(\Omega_n,w_n)
 + 2 \beta \, |\Omega\bigtriangleup T_n(\Omega)| \\
& + \int_{\Omega} W(\nabla v)\,dz - \int_{T_n(\Omega)} W(\nabla v)\,dz
 + \int_{\Gamma_\Omega}\psi(\nu_\Omega)\,d\hn - \int_{\Gamma_{T_n(\Omega)}} \psi(\nu_{T_n(\Omega)})\,d\hn \\
& + \beta \Bigl( \bigl| |\Omega|-|\Omega_h| \bigr| - \bigl| |T_n(\Omega)|-|\Omega_h| \bigr| \Bigr) \\
&\geq \left(2 \beta - W_0 - \beta \right) |\Omega\bigtriangleup T_n(\Omega)|
 + \int_{\Gamma_\Omega}\psi(\nu_\Omega)\,d\hn - \int_{\Gamma_{T_n(\Omega)}} \psi(\nu_{T_n(\Omega)})\,d\hn,
\end{align*}
where in the last inequality we used the fact that $J_{\beta}(T_n(\Omega),v) - J_{\beta}(\Omega_n,w_n)\geq0$
by the minimality of $(\Omega_n,w_n)$,
and $W_0$ is a positive constant depending only on $W$ and $u$.

Now recalling the 1-homogeneity of $\psi$, the Euler's theorem $\psi(\nu)=\nabla\psi(\nu)\cdot\nu$
and the convexity of $\psi$ yield
$$
\psi(\nu_\Omega) \geq \psi(\nu_h) + \nabla\psi(\nu_h)\cdot(\nu_\Omega-\nu_h) = \nabla\psi(\nu_h)\cdot\nu_\Omega
\quad\text{on }\Gamma_{\Omega},
$$
where, for every $z\in\R^N$, we denote by $\nu_h(z)$ the upper unit normal to the graph of $h$ at the point $(\pi(z),h(\pi(z)))$.
Hence, using again Euler's theorem and observing that
$\mathcal{H}^{N-1}$-almost everywhere on $\Gamma_{T_n(\Omega)}\setminus\Gamma_\Omega$
the normal to $\Gamma_{T_n(\Omega)}$ coincides with $\nu_h$ , we obtain
\begin{align} \label{lemmino}
\int_{\Gamma_\Omega} \psi(\nu_{\Omega}) &\,d\hn - \int_{\Gamma_{T_n(\Omega)}} \psi(\nu_{T_n(\Omega)}) \,d\hn \nonumber\\
&\geq \int_{\Gamma_\Omega\setminus\Gamma_{T_n(\Omega)}} \nabla\psi(\nu_h)\cdot\nu_\Omega\,d\hn
 - \int_{\Gamma_{T_n(\Omega)}\setminus\Gamma_{\Omega}} \nabla\psi(\nu_h)\cdot\nu_h\,d\hn \nonumber \\
&\geq - \int_{\Omega\bigtriangleup T_n(\Omega)} \big| \div(\nabla\psi\circ\nu_h) \big| \,dz
\geq -\Lambda_0 |\Omega\bigtriangleup T_n(\Omega)| \,.
\end{align}
Here $\Lambda_0:=\|H^\psi\|_{L^\infty(\Gamma_h)}$, where $H^\psi$ denotes the anisotropic mean curvature of $\Gamma_h$.
Hence we can conclude
$$
\widetilde{J}_{\beta} (\Omega,v) - \widetilde{J}_{\beta} (\Omega_n,w_n)
\geq \left(\beta - W_0 - \Lambda_0 \right) |\Omega\bigtriangleup T_n(\Omega)|,
$$
so that by choosing $\beta>W_0+\Lambda_0$
(notice that this constant depends only on $W$, $\psi$, $h$ and $u$)
we deduce that $(\Omega_n,w_n)$ is a solution to \eqref{penalizzato2}.

\smallskip
\noindent
{\it Step 4.}
We claim that each $\Omega_n$ satisfies the volume constraint
\begin{equation} \label{volume}
|\Omega_{n}| = |\Omega_h|.
\end{equation}
Suppose by contradiction that $|\Omega_h|-|\Omega_{n}|=:d>0$ for some $n$.
We can find $\delta\in(-\delta_n,\delta_n)$ such that $|\Omega_{n}\cup\Omega_{h+\delta}|=|\Omega_h|$.
Define $U := \Omega_n\cup\Omega_{h+\delta}$.
Then, as $|U|=|\Omega_h|$, we have
\begin{align} \label{contradiction1}
J_{\beta}(U,w_n) &- J_{\beta}(\Omega_n,w_n)
= \int_{U} W(\nabla w_n)\,dz - \int_{\Omega_n} W(\nabla w_n)\,dz \nonumber\\
& + \int_{\Gamma_{U}} \psi(\nu_{U})\,d\hn - \int_{\Gamma_{\Omega_n}} \psi(\nu_{\Omega_n})\,d\hn -\beta d \nonumber\\
& \leq (W_0-\beta) \, d + \int_{\Gamma_{U}} \psi(\nu_{U})\,d\hn - \int_{\Gamma_{\Omega_n}} \psi(\nu_{\Omega_n})\,d\hn
\end{align}
where $W_0$ is the same constant as in Step 3.
Now, arguing as in \eqref{lemmino}, we have
$$
\int_{\Gamma_{U}} \psi(\nu_{U})\,d\hn - \int_{\Gamma_{\Omega_n}} \psi(\nu_{\Omega_n})\,d\hn
\leq \Lambda_0 d.
$$
Hence \eqref{contradiction1} implies that
$$
J_{\beta}(U,w_n) - J_{\beta}(\Omega_n,w_n)
\leq \left( W_0+\Lambda_0-\beta \right) d <0
$$
(recall that $\beta>W_0+\Lambda_0$), which is a contradiction with the minimality of $(\Omega_n,w_n)$.

In the case $|\Omega_{n}|>|\Omega_h|$,
we can find $\delta\in(-\delta_n,\delta_n)$ such that $|\Omega_{n}\cap\Omega_{h+\delta}|=|\Omega_h|$.
Then, setting $U := \Omega_n\cap\Omega_{h+\delta}$ and arguing as before,
we still contradict the minimality of $(\Omega_n,w_n)$.

\smallskip
\noindent
{\it Step 5.}
We claim that $\Omega_n^\#$ is an $(\omega,R)$-minimizer for the anisotropic perimeter
(see Definition~\ref{def:qmin}),
with $\omega$ and $R$ independent of $n$.
Indeed, consider any ball $B_r(x)$ and any set $F$ such that $\Omega_n^\#\bigtriangleup F\subset\subset B_r(x)$.
By a translation argument we can assume $B_r(x)\subset Q\times\R$;
moreover, by taking a sufficiently small $R$ we can also assume without loss of generality that $B_r(x)\subset\Omega'$.
Hence, setting $F':=F\cap\Omega'$, we have that $(F',w_n)\in\widetilde{X}$ is an admissible competitor in problem \eqref{penalizzato2}.
By the minimality of $(\Omega_n,w_n)$,
we have $\widetilde{J}_\beta(F',w_n)- \widetilde{J}_\beta(\Omega_n,w_n)\geq0$, which yields
\begin{align*}
\int_{\partial^*\Omega_n\cap B_r(x)}&\psi(\nu_{\Omega_n})\,d\hn \\
&\leq \int_{\partial^*F\cap B_r(x)}\psi(\nu_F)\,d\hn
+\int_{F'}W(\nabla w_n)\,dz - \int_{\Omega_n} W(\nabla w_n)\,dz \\
&\qquad + \beta \big| |F'|-|\Omega_n| \big| + 2\beta |F'\bigtriangleup T_n(F')| \\
&\leq \int_{\partial^*F\cap B_r(x)}\psi(\nu_F)\,d\hn + (W_0+3\beta)|F'\bigtriangleup \Omega_n|,
\end{align*}
where we used the fact that $F' \bigtriangleup T_n(F') \subset F' \bigtriangleup \Omega_n$.
Since $|F'\bigtriangleup \Omega_n|=|F\bigtriangleup\Omega_n^\#|$,
the previous inequality proves the claim with $\omega=W_0+3\beta$.

Hence, by the regularity of quasi-minimizers (see Theorem~\ref{teo:qminreg}), we deduce that
$\Omega_n$ is a set of class $C^{1,\frac12}$ for $n$ large enough,
and that it converges to $\Omega_h$ in $C^{1,\alpha}$ for all $\alpha\in(0,\frac12)$.
In turn, this implies that for $n$ large the set $\Omega_n$
is in fact the subgraph of a function $k_n\in C^{1,\frac12}_\#(Q)$
(that is, $\Omega_n=\Omega_{k_n}$),
and $k_n\to h$ in $C^{1,\alpha}$ for all $\alpha\in(0,\frac12)$.

\smallskip
\noindent
{\it Step 6.}
We claim that $k_n\to h$ in $W^{2,p}$ for every $p\in(1,\infty)$.

Fix $\eta\in C_\#^\infty(Q)$ and set $k_n^\e:=k_n+\e\eta$, for $\e>0$.
By the quasi-minimality property of $\Gamma_{k_n}$ proved in the previous step we have
$$
\int_{\Gamma_{k_n}} \psi(\nu_{k_n})\,d\hn
\leq \int_{\Gamma_{k_n^{\e}}} \psi(\nu_{k_n^{\e}}) \,d\hn
+ \e \int_Q |\eta(x)| \,dx.
$$
Dividing by $\e$ and letting $\e\to0$, we deduce
$$
\int_Q \nabla\psi(-\nabla k_n,1)\cdot(\nabla\eta,0)\,dx \leq \|\eta\|_{L^1(Q)}\,.
$$
Hence, the left-hand side in the previous inequality defines a continuous linear functional on $L_\#^1(Q)$,
that is, denoting by $H^\psi_{k_n}$ the anisotropic mean curvature of $\Gamma_{k_n}$ and recalling \eqref{Hpsi},
$$
-H^\psi_{k_n} (\cdot,k_n(\cdot)) = H_n \qquad\text{on }Q
$$
in the sense of distributions, for some bounded function $H_n$ whose $L^\infty$-norm is bounded by 1.
This uniform bound, combined with the convergence of the functions $k_n$ to $h$ in $C^{1,\alpha}$,
implies by standard elliptic estimates (see Lemma~\ref{lem:ellipticest}) that the functions $k_n$ are equibounded in $W^{2,p}$ for every $p>1$.

We can now write the Euler-Lagrange equations for problem \eqref{penalizzato1}:
since $k_n$ is of class $W^{2,p}$ we have
$$
H^\psi_{k_n}(x,k_n(x)) =
\begin{cases}
-W \bigl( \nabla w_n(x,k_n(x)) \bigr) + \lambda_n & \text{ in } A_n:= \bigl\{ |k_n-h|<\delta_n \bigr\}, \\
-W \bigl( \nabla u(x,h(x) \bigr) + \lambda & \text{ in } \bigl\{ |k_n-h|=\delta_n \bigr\},
\end{cases}
$$
where $\lambda_n$, $\lambda$ are the Lagrange multipliers due to the volume constraint.
To deduce the equation in $A_n$ we considered variations only of the profile $k_n$, compactly supported in $A_n$,
while the equation in the complement of $A_n$ easily follows from the fact that $(h,u)$
satisfies \eqref{critpair}.
Notice that the sequence $\lambda_n$ is bounded, by the uniform bounds on  $H^\psi_{k_n}$ and on $\nabla w_n$.

Now, if $\hn(A_n)\to0$, we immediately have
\begin{equation} \label{convcurvature}
H^\psi_{k_n}(\cdot,k_n(\cdot)) \to H^\psi (\cdot,h(\cdot)) \qquad\text{in } L^{p}(Q) \text{ for all }p>1.
\end{equation}
Otherwise, assuming that $\hn(A_n)\geq c>0$ for all $n$,
integrating the Euler-Lagrange equation in $Q$ we deduce by periodicity that
\begin{align*}
-\int_{A_n} &W(\nabla w_n(x,k_n(x)))\,dx + \lambda_n\hn(A_n)
- \int_{Q\setminus A_n} W(\nabla u(x,h(x)))\,dx + \lambda\hn(Q\setminus A_n) \\
&= \int_Q H^\psi_{k_n}(x,k_n(x))\,dx
= 0 = \int_Q H^\psi(x,h(x))\,dx
= - \int_{Q} W(\nabla u(x,h(x)))\,dx + \lambda\hn(Q).
\end{align*}
Now the uniform convergence of $\nabla w_n$ to $\nabla u$ on $\Gamma_{k_n}$ and the convergence of $k_n$ to $h$ in $C^{1,\alpha}$
yield $(\lambda_n-\lambda)\hn(A_n)\to0$,
and in turn $\lambda_n\to\lambda$ since $\hn(A_n)\geq c>0$.
Hence, using again the Euler-Lagrange equations, we can conclude that \eqref{convcurvature} holds.
In turn, by elliptic regularity (Lemma~\ref{lem:ellipticest}) this implies that $k_n\to h$ in $W^{2,p}$
for every $p>1$, as claimed.

\smallskip
\noindent
{\it Step 7.}
We are now in position to conclude the proof of the theorem.
Since
$$
\|k_n - h\|_{W^{2,p}(Q)}\to0,
\quad
\| \nabla w_n - \nabla u \|_{L^\infty(\Omega_{k_n};\M^N)} \to0,
$$
and, by Step 4, $|\Omega_{k_n}|=|\Omega_h|$,
inequality \eqref{contradiction0} is in contradiction
with the $W^{2,p}$-local minimality of $(h,u)$.
\end{proof}

Combining the previous result with Theorem~\ref{teo:locmin},
we immediately obtain the announced local minimality condition.

\begin{theorem} \label{cor:locmin}
Assume $N=2,3$.
If $(h,u)\in X$ is a strictly stable critical pair, according to Definition~\ref{def:stable2},
then $(h,u)$ is a local minimizer for the functional $F$, in the sense of Definition~\ref{def:locmin}.
\end{theorem}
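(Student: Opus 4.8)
The plan is simply to chain together the two principal results already established, namely Theorem~\ref{teo:locmin} and Theorem~\ref{teo:locmin2}. First I would fix once and for all an exponent $p>2N$ (for instance $p=2N+1$, which is admissible in both statements since $N=2,3$), and let $(h,u)\in X$ be a strictly stable critical pair in the sense of Definition~\ref{def:stable2}.

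Next I would verify that the hypotheses of Theorem~\ref{teo:locmin} are met. By Definition~\ref{def:critpair} of critical pair we have $h\in C^2_\#(Q)$, the map $u\in C^2(\overline{\Omega}_h^\#;\R^N)$ is a critical point for the elastic energy in $\Omega_h$, and the curvature relation \eqref{critpair} holds; strict stability moreover encodes precisely the uniform positivity \eqref{c0} of the elastic second variation (via Definition~\ref{def:stable1}) together with the strict positivity \eqref{defpos} of the quadratic form $\partial^2F(h,u)$ on $\Htilde(\Gamma_h)\setmeno\{0\}$. These are exactly the assumptions under which Theorem~\ref{teo:locmin} applies in dimension $N=2,3$, and it yields that $(h,u)$ is an isolated $W^{2,p}$-local minimizer for $F$ in the sense of Definition~\ref{def:locmindeb}; in particular it is a $W^{2,p}$-local minimizer.

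It then remains to upgrade this weaker minimality to the strong notion of Definition~\ref{def:locmin}. Since $(h,u)$ is a critical pair and a $W^{2,p}$-local minimizer with $p\in(1,\infty)$, Theorem~\ref{teo:locmin2} applies verbatim and gives that $(h,u)$ is a local minimizer for $F$, which is precisely the assertion (note that one does not claim isolatedness at this level, Theorem~\ref{teo:locmin2} not being phrased with a strict inequality). I expect no genuine obstacle in this combination step itself: the entire difficulty of the argument is concentrated in the two ingredients. The delicate point behind Theorem~\ref{teo:locmin} is the fractional-order trace estimate of Lemma~\ref{lem:algebra}, controlling $\partial_{\nu_g}(W(\nabla u_g))$ uniformly in $W^{-1/p,p}_\#(\Gamma_h)$, which is also what forces the restriction $N=2,3$ (it enters through the embedding of $H^1(\Gamma_h)$ into every $L^q(\Gamma_h)$); the delicate point behind Theorem~\ref{teo:locmin2} is the regularity bootstrap, where the solutions of the penalized problems are shown to be $(\omega,R)$-minimizers of the anisotropic perimeter (Definition~\ref{def:qmin}), hence converge in $C^{1,\alpha}$ by Theorem~\ref{teo:qminreg}, after which the Euler--Lagrange equation combined with elliptic estimates (Lemma~\ref{lem:ellipticest}) promotes this to $W^{2,p}$-convergence and contradicts the $W^{2,p}$-minimality produced in the first step.
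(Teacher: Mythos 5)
Your proposal is correct and coincides with the paper's own proof, which likewise obtains Theorem~\ref{cor:locmin} by applying Theorem~\ref{teo:locmin} (with a fixed $p>2N$) to get isolated $W^{2,p}$-local minimality and then invoking Theorem~\ref{teo:locmin2} to upgrade this to local minimality in the sense of Definition~\ref{def:locmin}. Your identification of where the real work lies (Lemma~\ref{lem:algebra} and the quasi-minimizer regularity argument) also matches the paper's presentation.
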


We conclude this section by observing that Theorem~\ref{cor:locmin} can be extended to the linear elastic case,
where we have the following stronger result.
Given a set $A$ and a constant $M>0$, we denote by ${\rm Lip}_M(A;\R^N)$ the class of Lipschitz functions $v:A\to\R^N$
whose Lipschitz constant is bounded by $M$.

\begin{theorem} \label{teo:linelas}
Assume that the elastic energy density has the form
$$
W(\xi):=\frac12 \, C \, \biggl(\frac{\xi+\xi^T}{2}\biggr) : \biggl(\frac{\xi+\xi^T}{2}\biggr)\,,
\qquad
\xi\in\M^N\,,
$$
for some constant fourth-order tensor $C$ such that
\begin{equation} \label{elastpos}
C\xi:\xi\geq c_0|\xi|^2
\quad\text{for every }\xi\in \M^N_{\rm sym},
\quad c_0>0,
\end{equation}
where $\M^N_{\rm sym}$ denotes the subset of $\M^N$ of the symmetric matrices.
If $N=2,3$ and $(h,u)$ is a strictly stable critical pair, then
$(h,u)$ is an isolated local minimizer for $F$ in the following sense:
for every $M>\|\nabla u\|_\infty$ there exists $\delta=\delta(M)>0$ such that
\begin{equation} \label{locminel}
F(h,u)<F(g,v)
\end{equation}
for every $(g,v)\in X$ with $0<\|g-h\|_\infty<\delta$, $|\Omega_g|=|\Omega_h|$,
and $v\in{\rm Lip}_M(\Omega_g;\R^N)$.

\end{theorem}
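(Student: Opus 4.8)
The plan is to run once more the contradiction scheme of the proof of Theorem~\ref{teo:locmin2}, exploiting the convexity of the quadratic density $W$ both to discard the bound on $\nabla^2v$ in the penalization and to replace the $L^\infty$-closeness of $\nabla v$ by the Lipschitz bound. Fix $M>\|\nabla u\|_\infty$ and suppose, for contradiction, that there are $\sigma_n\to0$ and pairs $(g_n,u_n)\in X$ with $0<\|g_n-h\|_\infty\le\sigma_n$, $|\Omega_{g_n}|=|\Omega_h|$, $u_n\in{\rm Lip}_M(\Omega_{g_n};\R^N)$ and $F(g_n,u_n)<F(h,u)$. First I would extend each $u_n$ from $\Omega_{g_n}$ to a periodic Lipschitz map on $\Omega'$ agreeing with $u_0$ on $\{y\le0\}$ — routine, e.g.\ via a McShane--Kirszbraun formula that respects the periodicity; this costs at most a dimensional factor on the Lipschitz constant. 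Mollifying as in Step~1 of Theorem~\ref{teo:locmin2} (mollification does not increase the Lipschitz constant) produces smooth competitors $v_n\in C^\infty(\overline{\Omega}';\R^N)$ and scales $\delta_n\to0$ with $\|g_n-h\|_\infty\le\delta_n$, $\|\nabla v_n\|_{L^\infty(\Omega')}\le M''$ and $F(g_n,v_n)<F(h,u)$, where $M'':=CM$ for a suitable dimensional constant $C$. I would then consider the penalized problem \eqref{penalizzato1} modified as follows: drop the (immaterial for quadratic $W$) constraint $\det\nabla v>0$ from the admissible class, and minimize $J_\beta$ over the $(\Omega,v)$ with $\Omega_{h-\delta_n}\subset\Omega\subset\Omega_{h+\delta_n}$ and $\|\nabla v\|_{L^\infty(\Omega')}\le M''$, \emph{without} imposing any bound on $\nabla^2v$. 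A minimizer $(\Omega_n,w_n)$ still exists by the direct method: along a minimizing sequence the sets converge in $L^1$ with equibounded perimeter and the deformations weakly-$*$ in $W^{1,\infty}(\Omega')$, the surface term is lower semicontinuous by Reshetnyak's theorem, and $\int W(\nabla v)\,dz$ over the varying domains is lower semicontinuous because $W$ is convex (exhaust the limit domain from inside by open sets where weak lower semicontinuity is classical). Testing with $(\Omega_{g_n},v_n)$ yields $\widetilde F(\Omega_n,w_n)\le J_\beta(\Omega_{g_n},v_n)=F(g_n,v_n)<F(h,u)$.

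Steps~3--5 and the first half of Step~6 of the proof of Theorem~\ref{teo:locmin2} then carry over essentially verbatim, the only change being that the auxiliary constant $W_0$ — hence $\beta$ and the quasi-minimality constant $\omega$ — is now controlled by $\sup\{W(\xi):|\xi|\le M''\}$, still independently of $n$. This gives: $|\Omega_n|=|\Omega_h|$; $\Omega_n^\#$ is an $(\omega,R)$-minimizer of the anisotropic perimeter with $\omega,R$ independent of $n$, so by Theorem~\ref{teo:qminreg} one has $\Omega_n=\Omega_{k_n}$ with $k_n\in C^{1,1/2}_\#(Q)$ and $k_n\to h$ in $C^{1,\alpha}$ for every $\alpha<\frac12$; and, by the quasi-minimality of $\Gamma_{k_n}$, $\|H^\psi_{k_n}\|_{L^\infty}\le1$, whence $k_n$ is equibounded in $W^{2,p}_\#(Q)$ for every $p$ by Lemma~\ref{lem:ellipticest}(ii).

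The new ingredient, and the step I expect to be the main obstacle, is to prove that $\nabla w_n\to\nabla u$ \emph{uniformly}; this is where the strict gap $M>\|\nabla u\|_\infty$ is used. For the fixed domain $\Omega_{k_n}$, $w_n$ realizes the minimum of the elastic energy among admissible deformations with Lipschitz constant $\le M''$. Let $\bar u_n$ denote the unconstrained elastic equilibrium in $\Omega_{k_n}$ (datum $u_0$ on $\{y=0\}$, traction-free on $\Gamma_{k_n}$); it is unique, because the form $v\mapsto\int_{\Omega_{k_n}}C\,{\rm sym}\nabla v:{\rm sym}\nabla v\,dz$ is coercive on $\Vtilde(\Omega_{k_n})$ uniformly in $n$ — this follows from \eqref{elastpos}, Korn's inequality, and the stability of the Korn constant under the $C^{1,\alpha}$-convergence $k_n\to h$ (the linear instance of \eqref{c0}, cf.\ also \eqref{provv}). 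By Schauder estimates for the mixed Dirichlet/traction problem on the $C^{1,\alpha}$-converging domains $\Omega_{k_n}$, together with continuous dependence on the domain (pull $\bar u_n$ back through the diffeomorphisms $\Phi_{k_n}$, which tend to the identity in $C^1$ since $k_n\to h$ in $C^{1,\alpha}$, and invoke uniqueness for the limiting problem), one obtains $\bar u_n\circ\Phi_{k_n}\to u$ in $C^1(\overline{\Omega}_h;\R^N)$, hence $\|\nabla\bar u_n\|_{L^\infty(\Omega_{k_n})}\to\|\nabla u\|_{L^\infty(\Omega_h)}<M$. Extending $\bar u_n$ to $\Omega'$ by the same construction used for the $u_n$, the extension has Lipschitz constant bounded by the same dimensional multiple of $\|\nabla\bar u_n\|_{L^\infty}$, which tends to a quantity $<CM=M''$ precisely because $\|\nabla u\|_\infty<M$; therefore for $n$ large $\bar u_n$ (so extended) is an admissible competitor for the penalization, and being the unconstrained elastic minimizer it is \emph{a fortiori} the constrained one, so by uniqueness $w_n=\bar u_n$ on $\Omega_{k_n}$. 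In particular $\|\nabla w_n-\nabla u\|_{L^\infty(\Omega_{k_n})}\to0$, and $\det\nabla w_n>0$ on $\Omega_{k_n}$ for $n$ large, so that $(k_n,w_n)\in X$.

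It remains to run Steps~6--7 of the proof of Theorem~\ref{teo:locmin2}. Since $k_n\in W^{2,p}_\#(Q)$, the Euler--Lagrange equation of the penalization with respect to profile variations supported in $A_n:=\{|k_n-h|<\delta_n\}$ reads $H^\psi_{k_n}(\cdot,k_n(\cdot))=-W(\nabla w_n(\cdot,k_n(\cdot)))+\lambda_n$ there, with $\lambda_n$ bounded, while on $Q\setminus A_n$ one has $H^\psi_{k_n}(\cdot,k_n(\cdot))=-W(\nabla u(\cdot,h(\cdot)))+\lambda$ by \eqref{critpair}. Integrating over $Q$ by periodicity and using $\nabla w_n\to\nabla u$ uniformly and $k_n\to h$ in $C^{1,\alpha}$ forces $\lambda_n\to\lambda$, hence $H^\psi_{k_n}(\cdot,k_n(\cdot))\to H^\psi(\cdot,h(\cdot))$ in $L^p(Q)$, so $k_n\to h$ in $W^{2,p}_\#(Q)$ by Lemma~\ref{lem:ellipticest}(i). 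Moreover $k_n\ne h$: if $k_n=h$, then $\widetilde F(\Omega_h,w_n)<F(h,u)$ would force $\int_{\Omega_h}W(\nabla w_n)\,dz<\int_{\Omega_h}W(\nabla u)\,dz$, which is impossible since for the quadratic $W$ the critical deformation $u$ is the global minimizer of the elastic energy in $\Omega_h$. Thus $(k_n,w_n)\in X$ satisfies $0<\|k_n-h\|_{W^{2,p}(Q)}\to0$, $|\Omega_{k_n}|=|\Omega_h|$, $\|\nabla w_n-\nabla u\|_{L^\infty(\Omega_{k_n})}\to0$ and $F(k_n,w_n)=\widetilde F(\Omega_{k_n},w_n)<F(h,u)$, contradicting the fact that, by Theorem~\ref{teo:locmin}, the strictly stable pair $(h,u)$ is an \emph{isolated} $W^{2,p}$-local minimizer. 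This contradiction proves \eqref{locminel}, with $\delta=\delta(M)>0$ the scale below which all of the above holds.
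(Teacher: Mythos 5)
Your overall architecture coincides with the paper's: relax the penalized problem \eqref{penalizzato1} by dropping the $W^{2,\infty}$-bound (which is indeed unnecessary once only a Lipschitz constraint is imposed, existence following from weak-$*$ compactness in $W^{1,\infty}$, convexity of $W$ and Reshetnyak), run Steps 3--6 of Theorem~\ref{teo:locmin2} to get first $C^{1,\alpha}$- and then $W^{2,p}$-control of $k_n$, and --- the genuinely new point, which you identify correctly --- use the gap $M>\|\nabla u\|_\infty$ together with elliptic regularity on the $C^{1,\alpha}$-converging domains $\Omega_{k_n}$ to show that the constrained minimizer $w_n$ coincides with the unconstrained linear elastic equilibrium, whence $\nabla w_n\to\nabla u$ uniformly and the Euler--Lagrange/curvature argument applies. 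This is exactly the paper's route (you should also record, as the paper does, that Theorem~\ref{teo:locmin} itself survives in the linear setting, with Proposition~\ref{IFT} replaced by direct minimization of the convex elastic energy).

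There is, however, one genuine gap: you negate the wrong statement. The theorem asserts the strict inequality $F(h,u)<F(g,v)$ whenever $g\neq h$, so its negation produces a sequence with $F(g_n,u_n)\leq F(h,u)$, equality allowed; you assume $F(g_n,u_n)<F(h,u)$ and therefore only prove non-strict local minimality. Strictness enters your argument in two places. First, the mollification step: it only gives $F(g_n,v_n^k)\to F(g_n,u_n)$, so it cannot preserve a non-strict inequality; it should simply be deleted, which is harmless since your relaxed penalized problem admits Lipschitz competitors directly (extend $u_n$ by Kirszbraun, which preserves the Lipschitz constant exactly). Second, your proof that $k_n\neq h$ rests on $\widetilde{F}(\Omega_h,w_n)<F(h,u)$. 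Under the correct hypothesis one may end up with $k_n=h$ and $w_n=u$, and the missing idea (present in the paper) is the ensuing bootstrap: in that case $(h,u)$ attains the minimum of the penalized problem, hence so does $(g_n,v_n)$, which has the same penalized energy; re-running the quasi-minimality and regularity argument on $(g_n,v_n)$ itself yields $g_n\in C^{1,\frac12}_\#(Q)$, $g_n\to h$ in $W^{2,p}$, and $v_n$ equal to the elastic minimizer in $\Omega_{g_n}$ with $\nabla v_n\to\nabla u$ uniformly, so that the isolated $W^{2,p}$-local minimality of Theorem~\ref{teo:locmin} forces $g_n=h$, contradicting $\|g_n-h\|_\infty>0$. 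Without this final step your argument establishes local minimality but not the isolated local minimality claimed in \eqref{locminel}.
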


\begin{remark}
Notice that, by Korn's inequality, the positive definiteness of the tensor $C$ on the space of symmetric matrices implies that condition \eqref{c0} is automatically satisfied.
We suspect that, as in the two-dimensional case (see \cite{FM}), in the linearized framework the following stronger result should hold: there exists $\delta>0$ such that \eqref{locminel} is satisfied for every $(g,v)\in X$ with $0<\|g-h\|_\infty<\delta$, $|\Omega_g|=|\Omega_h|$, and $v\in{\rm Lip}(\Omega_g;\R^N)$.
In order to prove such a result, we would need a regularity theory for minimizing configurations, which is not yet available in the three-dimensional case.
\end{remark}

\begin{proof}[Proof of Theorem~\ref{teo:linelas}]
We first observe that the conclusion of Theorem~\ref{teo:locmin} holds also in this case.
Indeed, the construction provided by Proposition~\ref{IFT} is now unnecessary, since for every admissible profile $g$ we can consider the unique minimizer $u_g$ of the elastic energy in the corresponding reference configuration $\Omega_g$.
By standard elliptic regularity, the map $g\mapsto u_g$ satisfies the conclusions of Proposition~\ref{IFT},
so that we can repeat the proof of Theorem~\ref{teo:locmin} without changes.
Notice also that the estimate provided by Lemma~\ref{lem:algebra} remains valid in this case, since the fourth order tensor $W_{\xi\xi}$ satisfies the strong ellipticity condition, as a consequence of \eqref{elastpos}.

At this point we can follow the strategy of the proof of Theorem~\ref{teo:locmin2}, where the contradiction hypothesis consists now in assuming the existence of a sequence $(g_n,v_n)\in X$ such that $\delta_n:=\|g_n-v_n\|_\infty\to0$, $|\Omega_{g_n}|=|\Omega_h|$,
$v_n\in{\rm Lip}_M(\Omega_{g_n})$, and $F(g_n,v_n)\leq F(h,u)$

The approximation argument contained in Step~1 of the previous proof is in this case unnecessary,
so that we do not need the strict inequality in \eqref{ipotesiassurdo1}.
Indeed, each function $v_n$ can be extended to $\Omega'$ without increasing the Lipschitz constant,
and we can now consider the penalized minimum problems
\begin{align} \label{penalizzato3}
\min \Bigl\{ J_{\beta}(\Omega,v) : (\Omega,v)\in\widetilde{X}, \;
\Omega_{h-\delta_n}\subset\Omega\subset\Omega_{h+\delta_n}, \;
v\in {\rm Lip}_M(\Omega';\R^N) \Bigr\}
\end{align}
which admits a solution without assuming any \emph{a priori} $W^{2,\infty}$-bound, as we did before.
Replacing \eqref{penalizzato1} by \eqref{penalizzato3}, the proof goes exactly as in the previous case, yielding the $C^{1,\alpha}$-convergence of $k_n$ to $h$ at the end of the fifth step; moreover, $k_n\in W^{2,p}(Q)$, as proved in the first part of Step~6.

Observe now that, denoting by $\tilde{w}_n$ the unique minimizer of the (linear) elastic energy in $\Omega_{k_n}$, by the standard regularity of the elliptic system associated with the first variation of the elastic energy we have that $\nabla\tilde{w}_n\circ\Phi_{k_n}$ converge uniformly to $\nabla u$ in $\overline{\Omega}_h$, so that for $n$ sufficiently large the constraint $\tilde{w}_n\in{\rm Lip}_M(\Omega')$ is satisfied.
Hence we necessarily have $w_n=\tilde{w}_n$: thus $w_n$ is in fact of class $C^{1}$ up to $\Gamma_{k_n}$, and we can conclude as before, by writing the Euler-Lagrange equations for the penalized problems, that $k_n\to h$ in $W^{2,p}(Q)$.

Finally, in the last step of the proof we deduce, by the isolated local minimality of $(h,u)$ proved in Theorem~\ref{teo:locmin},
that $k_n=h$ and $w_n=u$ for all sufficiently large $n$. It follows that $(h,u)$ and, in turn, $(g_n,v_n)$ are solutions to the penalized minimum problem: repeating the same argument for the sequence $(g_n,v_n)$, we conclude that for $n$ sufficiently large $g_n=h$ and $v_n=u$, which is the final contradiction.
\end{proof}

\end{section}

%%%%%%%%%%%%%%%%%%%%%%%%%%%%%%%%%%%%%%%%%%%%%%%%%%%%%%%%%%%%%%%%%%%%%%%%%%%%%%%%%%%%%%%%%%%%%%%%%%%%%%%%%%%%%%%%%%%%%%%%%%%%%%%%%%%%%%%%
%%%%%%%%%%%%%%%%%%%%%%%%%%%%%%%%%%%%%%%%%%%%%%%%%%%%%%%%%%%%%%%%%%%%%%%%%%%%%%%%%%%%%%%%%%%%%%%%%%%%%%%%%%%%%%%%%%%%%%%%%%%%%%%%%%%%%%%%
%%%%%%%%%%%%%%%%%%%%%%%%%%%%%%%%%%%%%%%%%%%%%%%%%%%%%%%%%%%%%%%%%%%%%%%%%%%%%%%%%%%%%%%%%%%%%%%%%%%%%%%%%%%%%%%%%%%%%%%%%%%%%%%%%%%%%%%%
%%%%%%%%%%%%%%%%%%%%%%%%%%%%%%%%%%%%%%%%%%%%%%%%%%%%%%%%%%%%%%%%%%%%%%%%%%%%%%%%%%%%%%%%%%%%%%%%%%%%%%%%%%%%%%%%%%%%%%%%%%%%%%%%%%%%%%%%
%%%%%%%%%%%%%%%%%%%%%%%%%%%%%%%%%%%%%%%%%%%%%%%%%%%%%%%%%%%%%%%%%%%%%%%%%%%%%%%%%%%%%%%%%%%%%%%%%%%%%%%%%%%%%%%%%%%%%%%%%%%%%%%%%%%%%%%%

\begin{section}{Stability of the flat configuration} \label{sect:flat}

In this section, as an application of our local minimality criterion, we deal with the issue of the stability of the flat configuration. Given a volume $d>0$, we will assume the existence of an affine critical point for the elastic energy in the domain $\Omega_{d}=Q\times(0,d)$, namely (recall Definition~\ref{def:critpoint}) an affine function $v_0(z)=M[z]$ for some $M\in\M^{N}_+$ solution to the problem
\begin{equation} \label{flatconfig0}
\begin{cases}
\div (W_\xi(\nabla v_0)) = 0 & \text{ in } \Omega_{d}, \\
W_\xi(\nabla v_0)[e_N] = 0 & \text{ on } \Gamma_{d}, \\
v_0-u_0 \in \mathcal{V}(\Omega_{d}),
\end{cases}
\end{equation}
where $u_0 (x,y)=(A [x],0)$ is the boundary Dirichlet datum.
Notice that an affine function automatically satisfies the first condition (as $\nabla v_0$ is constant),
but this is not always the case for the second one, that can be rewritten as
\begin{equation} \label{flatconfig}
\frac{\partial W}{\partial \xi_{iN}}(\nabla v_0)=0 \qquad \text{for every } \, i=1,\ldots,N.
\end{equation}

\begin{definition} \label{def:flatconf}
A pair $(d,v_0)\in X$, with $v_0(z)=M[z]$,
satisfying \eqref{flatconfig0} and condition \eqref{c0}
will be referred to as \textit{flat configuration} with volume $d$.
\end{definition}

We remark that, whenever it exists, $(d,v_0)$ is obviously a critical pair for the functional $F$.

\begin{example}
We now show the existence of an affine critical point for the elastic energy in a flat domain,
for boundary data close to the identity,
under the assumption that the identical deformation is a strict local minimum of the elastic energy.
More precisely, we assume that $W(I)=0$ and that
\begin{equation} \label{esempio:defpos}
\int_{\Omega_d} W_{\xi\xi}(I) \nabla w : \nabla w \,dz \geq k\|w\|_{H^1(\Omega_d;\R^N)}^2
\qquad\text{for every }w\in\Vtilde(\Omega_d),
\end{equation}
for some $k>0$.
Notice that, as $W\geq0$ and $W(I)=0$, necessarily $W_\xi(I)=0$.
We claim that, if $|A-I|<\e_0$ for some $\e_0>0$ sufficiently small,
then there exists an affine solution to \eqref{flatconfig0} corresponding to the boundary datum $u_0(x,y)=(A[x],0)$.

Indeed, given $A\in\M^{N-1}$, we look for a vector $\mathbf{b}=(b_1,\ldots,b_N)$ such that the affine function
$$v_{A,\mathbf{b}}(x,y)=(A[x],0)+y\mathbf{b}$$
satisfies \eqref{flatconfig}.
We define a map $G:(A,\mathbf{b})\mapsto W_\xi(\nabla v_{A,\mathbf{b}})[e_N]$.
As $W_\xi(I)=0$, we have that $G(I,e_N)=0$.
Moreover the matrix $\partial_\mathbf{b}G(I,e_N)$ is positive definite (hence invertible),
since for every vector $w\in\R^N\setminus\{0\}$
\begin{align*}
\partial_\mathbf{b}G(I,e_N)[w,w] = \sum_{i,j=1}^N \frac{\partial^2W}{\partial\xi_{iN}\partial\xi_{jN}}(I)w_iw_j
= W_{\xi\xi}(I) (w\otimes e_N) : (w\otimes e_N) >0\,,
\end{align*}
where the last inequality follows from the fact that the tensor $W_{\xi\xi}(I)$ satisfies the strong ellipticity condition
(by Theorem~\ref{teo:simpsonspector} and \eqref{esempio:defpos}).
Hence the claim follows by applying the Implicit Function Theorem
(notice also that the affine critical point constructed in this way satisfies condition \eqref{c0}, up to taking a smaller $\e_0$ if necessary, by continuity and by \eqref{esempio:defpos}).
\end{example}

When dealing with the flat configuration $(d,v_0)$, it is convenient to identify the space $\Htilde(\Gamma_{d})$ with the space
\begin{align*}
{\widetilde H}^1_\#(Q) &:=\Bigl\{\vphi \in H_{loc}^1(\R^{N-1}) :\, \vphi(x + e_i)=\vphi(x) \text{ for a.e. }x \in \R^{N-1}, \\
&\hspace{1cm} \text{ for every }i=1,\ldots,N-1,\, \int_Q\vphi(x)\,dx=0\Bigr\}\,.
\end{align*}
Notice that condition \eqref{prodscalpos} is always fulfilled (the coefficient $a$ in \eqref{prodottoscalare} vanishes), so that
$$
\| \vphi \|^2_\sim = \int_Q \nabla^2\psi(e_N) [(\nabla\vphi,0), (\nabla\vphi,0)] \,dx \qquad \text{for every }\vphi\in\Htilde(Q)
$$
is an equivalent norm on $\Htilde(Q)$;
in particular, this allows us to discuss the positivity of the second variation at the flat configuration
in terms of the quantity $\lambda_1(d)$ defined by \eqref{lambda1}
(here we make explicit the dependence on the height $d$ of the reference configuration).

We now prove a couple of propositions concerning the stability of the flat configuration.
Precisely, we show that the flat configuration, whenever it exists, is strictly stable if the volume is sufficiently small, while condition \eqref{defpos} is not satisfied if the domain is large enough. In the following, we will always assume to deal with elastic energy densities $W$ which admit a flat configuration.

\begin{proposition}
There exists $d_0>0$ such that for every $d<d_0$
$$
\partial^2F(d,v_0)[\vphi] > 0 \qquad \text{for every }\vphi\in\Htilde(Q)\setmeno\{0\}.
$$
\end{proposition}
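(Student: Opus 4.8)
The plan is to exploit the fact that, at the flat configuration, the elastic contribution to the second variation is of order $d$ relative to the (uniformly positive) surface term, so that for small $d$ the latter dominates. First I would specialize the quadratic form \eqref{fquad} to $(d,v_0)$. Since $\Gamma_d=Q\times\{d\}$ is flat, its second fundamental form $\B$ vanishes, so $\tr(\Bpsi\B)=0$; and since $\nabla v_0\equiv M$ is constant, $W\circ\nabla v_0$ is constant on $\Gamma_d$ and $\partial_\nu(W\circ\nabla v_0)=0$. Hence the coefficient $a$ in \eqref{prodottoscalare} vanishes, the last integral in \eqref{fquad} disappears, and one is left with
\[
\partial^2F(d,v_0)[\vphi]=\int_Q\nabla^2\psi(e_N)[(\nabla\vphi,0),(\nabla\vphi,0)]\,dx-\int_{\Omega_d}C_{v_0}\nabla\vf:\nabla\vf\,dz,
\]
where $C_{v_0}:=W_{\xi\xi}(M)$ and $\vf\in\Vtilde(\Omega_d)$ solves \eqref{vf}, which here reads $\int_{\Omega_d}C_{v_0}\nabla\vf:\nabla w\,dz=\int_{\Gamma_d}\div_{\Gamma_d}(\vphi\,W_\xi(M))\cdot w\,d\hn$ for all $w\in\Vtilde(\Omega_d)$. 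By \eqref{pospsi} the first term is bounded below by $\bar{c}\,\|\nabla\vphi\|_{L^2(Q)}^2$. Note that, being affine, $v_0$ is a critical point for the elastic energy in $\Omega_d$ for every height $d$, with the same $M$, so that $C_{v_0}$ is independent of $d$.

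Next I would estimate the elastic term. Testing the equation for $\vf$ with $w=\vf$ and using that $W_\xi(M)$ is a constant matrix, one has $\|\div_{\Gamma_d}(\vphi\,W_\xi(M))\|_{L^2(\Gamma_d)}\le C_1\|\nabla\vphi\|_{L^2(Q)}$ with $C_1=C_1(M)$. Since $\vf$ vanishes on $Q\times\{0\}$, the elementary thin-domain trace inequality $|\vf(x,d)|^2\le d\int_0^d|\partial_y\vf(x,y)|^2\,dy$ gives $\|\vf\|_{L^2(\Gamma_d)}^2\le d\,\|\vf\|_{H^1(\Omega_d)}^2$. Combining these with the coercivity \eqref{c0} (with constant $c_0$), the Cauchy--Schwarz inequality yields
\[
\int_{\Omega_d}C_{v_0}\nabla\vf:\nabla\vf\,dz\le\frac{C_1^2}{c_0}\,d\,\|\nabla\vphi\|_{L^2(Q)}^2 .
\]
Inserting this into the previous identity gives $\partial^2F(d,v_0)[\vphi]\ge\bigl(\bar{c}-\tfrac{C_1^2}{c_0}d\bigr)\|\nabla\vphi\|_{L^2(Q)}^2$, which is strictly positive for every $\vphi\in\Htilde(Q)\setmeno\{0\}$ (for such $\vphi$ one has $\nabla\vphi\neq0$, since $\int_Q\vphi\,dx=0$) as soon as $d<d_0:=\bar{c}\,c_0/C_1^2$. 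This is the desired conclusion.

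The step I expect to be the main obstacle is guaranteeing that the coercivity constant $c_0$ in \eqref{c0} can be taken \emph{independent of $d$} for $d$ small: a priori, the Korn/G\aa rding constant on the thin domain $\Omega_d=Q\times(0,d)$ could blow up as $d\to0$. The point is that the homogeneous Dirichlet condition on $\{y=0\}$, together with the $Q$-periodicity, excludes the degenerate shear modes responsible for such a blow-up; I would make this precise by rescaling $\Omega_d$ to $Q\times(0,1)$ and checking that the corresponding constant stays bounded below. Alternatively, the whole argument can be run through the quantity $\lambda_1(d)$ of Theorem~\ref{teo:condequiv}: the estimate above shows $\lambda_1(d)\le C_1^2 d/(\bar{c}\,c_0)\to0$ as $d\to0$, so $\lambda_1(d)<1$ for $d$ small, which is equivalent to \eqref{defpos}.
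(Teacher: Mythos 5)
Your argument is correct in substance but follows a genuinely different route from the paper. The paper works with the dual quantity $\mu_1(d)$ of Theorem~\ref{teo:condequiv}(iii) and proves $\mu_1(d)\to+\infty$ as $d\to0^+$ by a compactness/contradiction argument: a minimizing sequence $v_n\in\Vtilde(\Omega_{d_n})$ is transplanted into the fixed domain $\Omega_1$ by translating upwards and padding with zero on $\{y\le 1-d_n\}$, so that it converges weakly to $0$, and the compactness of $v\mapsto\Phi_v$ then contradicts $\|\Phi_{v_n}\|_\sim=1$. Your proof is instead a direct quantitative estimate: the thin-domain trace inequality $\|\vf\|_{L^2(\Gamma_d)}^2\le d\,\|\nabla\vf\|_{L^2(\Omega_d)}^2$ combined with testing \eqref{vf} by $w=\vf$ gives $\int_{\Omega_d}C_{v_0}\nabla\vf:\nabla\vf\,dz\le C_1^2c_0^{-1}d\,\|\nabla\vphi\|_{L^2(Q)}^2$, hence $\lambda_1(d)=O(d)$. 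This buys an explicit threshold $d_0$ and a rate, which the paper's soft argument does not provide; the computations (vanishing of the coefficient $a$, the bound on $\div_{\Gamma_d}(\vphi\,W_\xi(M))$ by $|W_\xi(M)|\,\|\nabla\vphi\|_{L^2}$, the Cauchy--Schwarz chain) all check out.

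The one loose end is exactly the one you flag: the uniformity in $d$ of the coercivity constant $c_0$ in \eqref{c0}. Your proposed fix by ``rescaling $\Omega_d$ to $Q\times(0,1)$'' is not the right move --- an isotropic rescaling does not map $Q\times(0,d)$ onto $Q\times(0,1)$, and an anisotropic one in the $y$-variable alone distorts the gradient terms non-uniformly as $d\to0$. The clean fix is precisely the device at the heart of the paper's proof: given $w\in\Vtilde(\Omega_d)$ with $d\le d'$, extend it to $\Omega_{d'}$ by $\tilde w(x,y):=w(x,y-d'+d)$ for $y>d'-d$ and $\tilde w:=0$ below. Since $C_{v_0}=W_{\xi\xi}(M)$ is constant (the same $M$ for every height, as \eqref{flatconfig} does not involve $d$), both the elastic quadratic form and the $H^1$-norm are preserved, so \eqref{c0} on $\Omega_{d'}$ transfers verbatim to $\Omega_d$ with the same constant. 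With this observation inserted, your argument is complete.
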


\begin{proof}
Denote by $\mu_1(d)$ the value of the minimum in \eqref{mu1} corresponding to the critical pair $(d,v_0)$;
by Theorem~\ref{teo:condequiv} it is sufficient to show that
$$
\lim_{d\to0^+}\mu_1(d)=+\infty.
$$
Assume by contradiction that there exist $C>0$,
a sequence $d_n\to0^+$ and a sequence $v_n\in\Vtilde(\Omega_{d_n})$
such that $\| \Phi_{v_n} \|_\sim=1$ and
$$
\int_{\Omega_{d_n}} W_{\xi\xi}(\nabla v_0)\nabla v_n : \nabla v_n \,dz \leq C.
$$
Then the functions
$$
\tilde{v}_n(x,y) :=
\begin{cases}
0 & \text{if } 0\leq y \leq 1-d_n\\
v_n(x,y-1+d_n) & \text{if } 1-d_n < y \leq 1
\end{cases}
$$
belong to $\Vtilde(\Omega_1)$,
$\| \Phi_{\tilde{v}_n} \|_\sim = \| \Phi_{v_n} \|_\sim=1$
and satisfy
$$
\int_{\Omega_{1}} W_{\xi\xi}(\nabla v_0)\nabla \tilde{v}_n : \nabla \tilde{v}_n \,dz \leq C.
$$
It follows that, up to subsequences, $\tilde{v}_n$ converges weakly to 0 in $\Vtilde(\Omega_1)$.
From the compactness of the map $v\mapsto\Phi_v$
we conclude that $\Phi_{\tilde{v}_n}\to0$ strongly in $\Htilde(Q)$,
a contradiction with the fact that $\| \Phi_{\tilde{v}_n} \|_\sim =1$.
\end{proof}

In order to show a situation where the flat configuration is no longer a local minimizer,
we slightly modify the setting of the problem defining, for $d>0$, $Q_d=(0,d)^{N-1}$ and $\Omega_d=(0,d)^N$;
all the notions considered up to now are extended to this situation in the natural way.

\begin{proposition}
There exists $d_1>0$ such that the quadratic form $\partial^2F(d,v_0)$ is not positive semidefinite for all $d>d_1$.
In particular, for all $d>d_1$ the flat configuration $(d,v_0)$ is not a local minimizer for $F$.
\end{proposition}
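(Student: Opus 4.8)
The plan is to exploit the scaling invariance of the problem: one rescales the cube $\Omega_d=(0,d)^N$ to the fixed unit cube and shows that, after this rescaling, $\partial^2 F(d,v_0)$ becomes a fixed quadratic form multiplied by $d^{N-3}$ plus an elastic correction that grows one power of $d$ faster; this is precisely the long‑wavelength (AGT) instability, which becomes available as soon as $d$ is large. First I would write $\partial^2 F(d,v_0)$ explicitly at the flat configuration. Since $\Gamma_d$ is flat with constant normal $\nu\equiv e_N$ we have $\B=\nabla\nu=0$, hence $\Bpsi=\nabla(\nabla\psi\circ\nu)=0$; moreover $\nabla v_0\equiv M$ is constant, so $W\circ\nabla v_0$ is constant and $\partial_\nu(W\circ\nabla v_0)=0$. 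Thus the zeroth order coefficient $a$ in \eqref{fquad} vanishes and, identifying $\Htilde(\Gamma_d)$ with $\Htilde(Q_d)$,
$$
\partial^2 F(d,v_0)[\varphi] = -\int_{\Omega_d} C_{v_0}\nabla v_\varphi:\nabla v_\varphi\,dz + \int_{Q_d}\nabla^2\psi(e_N)\bigl[(\nabla\varphi,0),(\nabla\varphi,0)\bigr]\,dx,
$$
where $v_\varphi\in\Vtilde(\Omega_d)$ solves \eqref{vf} and $C_{v_0}=W_{\xi\xi}(M)$ is a constant tensor, independent of $d$.

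Next comes the rescaling. Given $\vartheta\in\Htilde(Q_1)\cap C^\infty$ I would set $\varphi(x):=\vartheta(x/d)\in\Htilde(Q_d)$ and perform the change of variables $z=dy$. Tracking the powers of $d$ produced by $\nabla\varphi(x)=\tfrac1d\nabla\vartheta(x/d)$, by the surface measure on $\Gamma_d$, and by the volume element on $\Omega_d$, one checks that $\hat v(y):=\tfrac1d\,v_\varphi(dy)$ solves, on the fixed domain $\Omega_1=(0,1)^N$ with $Q_1$‑periodicity, exactly equation \eqref{vf} with right‑hand side $\tfrac1d\int_{\Gamma_1}\div_{\Gamma_1}(\vartheta\,W_\xi(M))\cdot w\,d\hn$; hence $\hat v=\tfrac1d\,\hat v_\vartheta$, with $\hat v_\vartheta$ the corresponding solution on $\Omega_1$. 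Collecting all the scaling factors yields the factorization
$$
\partial^2 F(d,v_0)[\varphi] = d^{N-3}\Bigl(\|\vartheta\|_\sim^2 - d\,Q(\vartheta)\Bigr),\qquad Q(\vartheta):=\int_{\Omega_1} C_{v_0}\nabla\hat v_\vartheta:\nabla\hat v_\vartheta\,dz,
$$
where $\|\vartheta\|_\sim^2=\int_{Q_1}\nabla^2\psi(e_N)[(\nabla\vartheta,0),(\nabla\vartheta,0)]\,dx$.

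It then remains to exhibit one $\vartheta_0\in\Htilde(Q_1)$ with $Q(\vartheta_0)>0$. Here the residual elastic stress enters: for a genuinely stressed flat configuration $W_\xi(M)\neq0$, so the right‑hand side of \eqref{vf} on $\Omega_1$ is not the zero functional for every $\vartheta$ — testing with a suitable trigonometric $\vartheta$ and with $w\in\Vtilde(\Omega_1)$ supported near $\Gamma_1$ produces a $\vartheta_0$ for which it is nonzero, whence $\hat v_{\vartheta_0}\neq0$ and, by \eqref{c0}, $Q(\vartheta_0)\geq c_0\|\hat v_{\vartheta_0}\|_{H^1(\Omega_1;\R^N)}^2>0$ (if instead $W_\xi(M)=0$ the elastic term vanishes identically and, by \eqref{pospsi}, the flat configuration is strictly stable for every $d$, so this degenerate unstressed case is excluded). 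Fixing such $\vartheta_0$ and setting $d_1:=\|\vartheta_0\|_\sim^2/Q(\vartheta_0)$, for every $d>d_1$ the choice $\varphi(x)=\vartheta_0(x/d)$ gives $\partial^2 F(d,v_0)[\varphi]=d^{N-3}\bigl(\|\vartheta_0\|_\sim^2-d\,Q(\vartheta_0)\bigr)<0$, so the quadratic form is not positive semidefinite. The final assertion follows from Theorem~\ref{teo:nec}: since $(d,v_0)$ is a critical pair satisfying \eqref{c0}, local minimality would force $\partial^2 F(d,v_0)\geq0$, a contradiction for $d>d_1$.

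I expect the only real obstacle to be the bookkeeping in the rescaling step: one must verify that the powers of $d$ coming from the rescaled profile, from the $\mathcal H^{N-1}$‑measure on $\Gamma_d$, from the $N$‑dimensional volume element, and from rescaling the test functions in the elliptic system all combine to give precisely the clean factor $d^{N-3}\bigl(\|\vartheta\|_\sim^2-d\,Q(\vartheta)\bigr)$. The nonvanishing of $Q$ is a comparatively minor point, settled by the explicit test‑function computation above.
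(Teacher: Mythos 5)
Your proof is correct and rests on exactly the same idea as the paper's: rescaling $\Omega_d$ to $\Omega_1$ so that the elastic term in the second variation picks up one more power of $d$ than the surface term. The only difference is in packaging: the paper rescales a maximizing eigenpair of the system \eqref{autovalori}, obtaining $\lambda_1(d)\geq d\,\lambda_1(1)$ and concluding via \eqref{fquadT}, whereas you carry out the scaling directly on the quadratic form, which is slightly more self-contained (it bypasses Theorem~\ref{teo:condequiv}) at the cost of having to exhibit a $\vartheta_0$ with $Q(\vartheta_0)>0$ by hand. Note that this last nonvanishing issue is not specific to your route: the paper's choice $d_1=1/\lambda_1(1)$ equally presupposes $\lambda_1(1)>0$, which fails precisely in the unstressed case $W_\xi(M)=0$ that you correctly flag and exclude; your explicit treatment of this point is a small improvement on the paper's implicit assumption.
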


\begin{proof}
Consider a nontrivial solution $(v,\vphi)\in\Vtilde(\Omega_1)\times\Htilde(Q)$ of \eqref{autovalori} in $\Omega_1$ with $\lambda=\lambda_1(1)$.
Setting $v_d(z)=v(\frac{z}{d})$, $\vphi_d(x)=d\,\vphi(\frac{x}{d})$,
a direct computation shows that $(v_d,\vphi_d)$ is a nontrivial solution of \eqref{autovalori} in $\Omega_d$
corresponding to $\lambda=d\,\lambda_1(1)$.
Hence $\lambda_1(d)\geq d\,\lambda_1(1)$, and taking $d_1=\frac{1}{\lambda_1(1)}$ we get that
$\lambda_1(d)>1$ for every $d>d_1$.
From this it is easily seen, using \eqref{fquadT}, that the quadratic form $\partial^2F(d,v_0)$ is not positive semidefinite for all $d>d_1$.
The last part of the statement follows from Theorem~\ref{teo:nec}.
\end{proof}

We conclude this section by discussing what happens in the case of \emph{crystalline} anisotropies,
namely if we assume less regularity in the anisotropic surface density
(we refer also to \cite{Bon}, where the two-dimensional case, in the framework of linearized elasticity, is studied in details).
Precisely, we assume here that $\psi_c: \R^N \to [0,+\infty)$ is a Lipschitz, positively 1-homogeneous and convex function,
such that the associated Wulff shape $W_{\psi_c}$ contains a neighborhood of the origin
and its boundary has a flat horizontal facet intersecting the $y$-axis.
We recall (see, \textit{e.g.}, \cite{Fon}) that the Wulff shape associated with a convex function $\psi:S^{N-1}\to(0,+\infty)$
is the convex set $W_\psi:=\{z\in\R^N: z\cdot\nu\leq\psi(\nu) \text{ for every }\nu\in S^{N-1}\}$.

Under these assumptions, we can show that the flat configuration is always a local minimizer for the associated functional $F_c$, whatever the volume $d>0$.

\begin{theorem} \label{teo:anis}
Let $N=2,3$, and let $\psi_c: \R^N \to [0,+\infty)$ be a Lipschitz, positively 1-homogeneous and convex function,
such that $\{|x|\leq a, \, y=b\}\subset\partial W_{\psi_c}$ for some $a,b>0$.
Then for every $d>0$ the flat configuration $(d,v_0)$ is a local minimizer for the associated functional $F_c$,
in the sense of Definition~\ref{def:locmin}.
\end{theorem}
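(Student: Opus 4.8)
The plan is to argue directly, bypassing the second--variation machinery (which is unavailable for the non-smooth $\psi_c$), and in fact to prove the stronger statement that $(d,v_0)$ is an \emph{isolated} local minimizer. Fix $\delta>0$ small; for a competitor $(g,v)\in X'$ with $\|g-d\|_\infty<\delta$, $|\Omega_g|=|\Omega_d|$ and $\|\nabla v-M\|_{L^\infty(\Omega')}<\delta$, I would estimate $F_c(g,v)-F_c(d,v_0)$ by splitting it into its surface and elastic parts. The horizontal facet is the only place where the crystalline structure enters: since $W_{\psi_c}$ is a convex body containing a neighbourhood of the origin and $\psi_c$ is its support function, the $(N-1)$-dimensional disk $\{|x|\le a,\ y=b\}\subset\partial W_{\psi_c}$ is a face of $W_{\psi_c}$ with outer normal $e_N$; hence $\psi_c(e_N)=b$ and every $z^*=(x^*,b)$ with $|x^*|\le a$ is a subgradient of $\psi_c$ at $e_N$. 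Testing the inequality $\psi_c(\nu)\ge\psi_c(e_N)+z^*\cdot(\nu-e_N)$ at $\nu=(-\xi,1)$ and optimizing over $|x^*|\le a$ yields $\psi_c(-\xi,1)\ge b+a\,|\xi|$ for every $\xi\in\R^{N-1}$, so that, by $1$-homogeneity of $\psi_c$,
$$
\int_{\Gamma_g}\psi_c(\nu_g)\,d\hn-\int_{\Gamma_d}\psi_c(\nu_d)\,d\hn=\int_Q\bigl(\psi_c(-\nabla g,1)-b\bigr)\,dx\ \ge\ a\,\|\nabla g\|_{L^1(Q)}.
$$

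For the elastic part the point is that the elastic energy and the results of Section~\ref{sect:critel} do not see $\psi_c$. Since $\Omega_d\subset\Omega'$, the difference $v|_{\Omega_d}-v_0$ lies in $\mathcal{V}(\Omega_d)$ and $\|\nabla v-M\|_{L^\infty(\Omega_d)}<\delta$. By Definition~\ref{def:flatconf} the flat configuration satisfies the hypotheses of Proposition~\ref{IFT} ($h\equiv d\in C^2_\#(Q)$, $v_0$ affine hence $C^\infty$, a critical point of the elastic energy in $\Omega_d$, and \eqref{c0} holds), so Proposition~\ref{prop:minEl} applied with $g=h$ provides a threshold below which $v_0$ is a strict local minimizer of the elastic energy in $\Omega_d$; choosing our $\delta$ below it, $\int_{\Omega_d}W(\nabla v)\,dz\ge\int_{\Omega_d}W(\nabla v_0)\,dz=d\,W(M)$, strictly unless $v\equiv v_0$ on $\Omega_d$. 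Writing
$$
\int_{\Omega_g}W(\nabla v)\,dz=\int_{\Omega_d}W(\nabla v)\,dz+\int_{\Omega_g\setmeno\Omega_d}W(\nabla v)\,dz-\int_{\Omega_d\setmeno\Omega_g}W(\nabla v)\,dz,
$$
and using that the volume constraint forces $|\Omega_g\setmeno\Omega_d|=|\Omega_d\setmeno\Omega_g|=\tfrac12\|g-d\|_{L^1(Q)}$, on which sets $\nabla v$ ranges in $\overline{B_\delta(M)}$, the last two integrals nearly cancel, giving
$$
\int_{\Omega_g}W(\nabla v)\,dz\ \ge\ d\,W(M)-\omega_W(\delta)\,\|g-d\|_{L^1(Q)},
$$
where $\omega_W(\delta)$ is half the oscillation of $W$ on $\overline{B_\delta(M)}$, so $\omega_W(\delta)\to0$ as $\delta\to0$ by continuity of $W$.

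It then remains to combine the two estimates with the Poincar\'e--Wirtinger inequality $\|g-d\|_{L^1(Q)}\le C_P\,\|\nabla g\|_{L^1(Q)}$ (legitimate since $g-d$ has zero average over $Q$ by the volume constraint), which gives
$$
F_c(g,v)-F_c(d,v_0)\ \ge\ \bigl(a-C_P\,\omega_W(\delta)\bigr)\,\|\nabla g\|_{L^1(Q)}\ \ge\ 0
$$
as soon as $\delta$ is so small that $C_P\,\omega_W(\delta)<a$, with strict inequality whenever $g\ne d$. The one genuine difficulty — and the reason the conclusion holds for \emph{every} $d>0$, unlike in the uniformly convex case — is precisely this balance: the flat facet promotes the surface penalization to a control that is \emph{linear} in $\|\nabla g\|_{L^1(Q)}$, which is robust enough to absorb the elastic discrepancy; the latter, thanks to the volume constraint and the smallness of $\nabla v-M$, is itself at most linear in $\|\nabla g\|_{L^1(Q)}$ but with a coefficient that can be made arbitrarily small. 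Everything else is routine.
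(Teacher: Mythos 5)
Your proof is correct, and it takes a genuinely different --- and more elementary --- route than the paper's. The paper sandwiches $\psi_c$ between the cylindrical anisotropy $\psi_C(\nu)=a|\nu_1|+b|\nu_2|$ and smooth uniformly convex approximations $\hat{\psi}_\e$, checks that the second variation of the approximating functional at the flat configuration is positive definite for small $\e$ (the facet makes the surface rigidity blow up like $a/\e$), invokes the main criterion (Theorem~\ref{cor:locmin}) to get local minimality for $\widehat{F}_{\e_0}$, and then transfers it to $F_c$ through the chain $F_{\e_0}\le F_C\le F_c$ with equality at $(d,v_0)$. You instead read off the linear coercivity $\psi_c(-\xi,1)\ge b+a|\xi|$ directly from the support-function structure of the facet, and beat the elastic excess by hand: the bulk part on $\Omega_d$ is nonnegative by Proposition~\ref{prop:minEl} (which only needs \eqref{c0}, granted by Definition~\ref{def:flatconf}), while the contribution of the transported volume is at most $\omega_W(\delta)\,\|g-d\|_{L^1(Q)}\le C_P\,\omega_W(\delta)\,\|\nabla g\|_{L^1(Q)}$ and is absorbed by the surface gain once $\delta$ is small. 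What this buys: you bypass the second-variation machinery entirely, so the restriction $N=2,3$ (needed by Theorem~\ref{cor:locmin} through the $W^{2,p}$-minimality step) disappears, and you obtain the stronger conclusion that $(d,v_0)$ is an \emph{isolated} local minimizer, with the quantitative bound $F_c(g,v)-F_c(d,v_0)\ge c\,\|\nabla g\|_{L^1(Q)}$; the paper's sandwiching only yields non-strict minimality. The one step you assert without proof is $\psi_c(e_N)=b$, equivalently $W_{\psi_c}\subset\{y\le b\}$: it does hold, because an $(N-1)$-dimensional flat piece of $\partial W_{\psi_c}$ forces $\{y=b\}$ to be a supporting hyperplane and the origin is an interior point of $W_{\psi_c}$ with $b>0$ (the paper cites the corresponding facts from Fonseca); note that without this identity your surface estimate would degrade by the constant $\psi_c(0,1)-b$, so it is genuinely needed and worth a line of justification.
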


\begin{proof}
Since we always evaluate the function $\psi_c$ at vectors whose last component is nonnegative,
without loss of generality we can assume that the Wulff shape $W_{\psi_c}$ is symmetric with respect to the hyperplane $\{y=0\}$.

From the assumptions on $\psi_c$ it follows that the cylinder $C=\{(x,y): |x|\leq a, |y| \leq b\}$
is contained in $W_{\psi_c}$.
Let $\psi_C(\nu_1,\nu_2) = a|\nu_1| + b|\nu_2|$ be an anisotropy whose Wulff shape is exactly the cylinder $C$.
Observe that
\begin{equation} \label{wulff1}
\psi_C\leq\psi_c, \qquad
\psi_c(0,1)=\psi_C(0,1)=b
\end{equation}
(the first follows from \cite[Proposition~3.5 (iii)]{Fon} and the inclusion $C\subset W_{\psi_c}$,
while the second is a consequence of \cite[Proposition~3.5 (iv)]{Fon}).

We now introduce a family of ``approximating'' functionals:
consider, for $\e>0$, the function $\psi_\e(x,y)=a\sqrt{\e^2y^2+|x|^2} + (b-a\e)|y|$,
and the associated functional $F_\e$.
Note that $\psi_\e$ converges monotonically from below to $\psi_C$ as $\e\to0^+$;
geometrically, the Wulff shapes associated with the functions $\psi_\e$
converge monotonically from the interior to the cylinder $C$.

Consider first the regular functions $\hat{\psi}_\e(x,y)=a\sqrt{\e^2y^2+|x|^2}$
and the associated functionals $\widehat{F}_\e$:
they satisfy all the assumptions of Section~\ref{sect:settings}
(in particular, the uniform convexity condition \eqref{pospsi} follows from the explicit computation of the hessian of $\hat{\psi}_\e$),
and the quadratic form associated to the second variation of $\widehat{F}_\e$ at the flat configuration
turns out to be
$$
\partial^2\widehat{F}_\e(d,v_0)[\vphi]=
-\int_{Q\times(0,d)} W_{\xi\xi}(\nabla v_0) \nabla v_{\vphi} : \nabla v_{\vphi} \, dz + \frac a\e \int_Q |\nabla\vphi|^2\,d\hn.
$$
Since
$$
\int_{Q\times(0,d)} W_{\xi\xi}(\nabla v_0) \nabla v_{\vphi} : \nabla v_{\vphi} \, dz
\leq C \|v_{\vphi}\|^2_{H^1(\Omega_d;\R^2)} \leq C' \|\vphi\|^2_{H^1(Q)}
$$
(where $C,C'$ are positive constants depending only on the boundary Dirichlet datum),
it follows that there exists $\e_0>0$ such that the quadratic form $\partial^2\widehat{F}_{\e_0}(d,v_0)$ is positive definite.
Hence, by Theorem~\ref{cor:locmin}, the flat configuration $(d,v_0)$ is a local minimizer for $\widehat{F}_{\e_0}$ for every volume $d>0$.
The same is true also for $F_{\e_0}$, since the energies $F_{\e_0}$ and $\widehat{F}_{\e_0}$ differ only by a constant value:
$F_{\e_0}=\widehat{F}_{\e_0}+(b-a\e_0)$.

We can now conclude the proof: let $\delta>0$ be such that the flat configuration minimizes the energy $F_{\e_0}$
among all competitors $(g,v)\in X'$ such that $|\Omega_g|=d$,
$0<\|g-d\|_\infty<\delta$,
and $\|\nabla v - \nabla v_0\|_{L^{\infty}(\Omega';\M^N)}<\delta$.
Then for every such $(g,v)$ we have
\begin{align*}
F_c (d,v_0)
&= \int_{Q\times(0,d)} W(\nabla v_0) \,dz + \psi_c(0,1)
 = \int_{Q\times(0,d)} W(\nabla v_0) \,dz + \psi_C(0,1) \\
&= F_C (d,v_0)
 = F_{\e_0}(d,v_0)
 \leq F_{\e_0}(g,v)
 \leq F_C(g,v)
 \leq F_c(g,v),
\end{align*}
where the first inequality follows from the local minimality of the flat configuration for $F_{\e_0}$,
the second one from $\psi_\e\leq\psi_C$
and the last one using $\psi_C\leq\psi_c$.
From the previous chain of inequalities the conclusion follows.
\end{proof}

\begin{remark}
If $W$ is as in Theorem~\ref{teo:linelas} and under the assumptions of Theorem~\ref{teo:anis},
we conclude that for every $d>0$ the flat configuration satisfies the isolated local minimality property
stated in Theorem~\ref{teo:linelas}.
\end{remark}

\end{section}

%%%%%%%%%%%%%%%%%%%%%%%%%%%%%%%%%%%%%%%%%%%%%%%%%%%%%%%%%%%%%%%%%%%%%%%%%%%%%%%%%%%%%%%%%%%%%%%%%%%%%%%%%%%%%%%%%%%%%%%%%%%%%%%%%%%%%%%%
%%%%%%%%%%%%%%%%%%%%%%%%%%%%%%%%%%%%%%%%%%%%%%%%%%%%%%%%%%%%%%%%%%%%%%%%%%%%%%%%%%%%%%%%%%%%%%%%%%%%%%%%%%%%%%%%%%%%%%%%%%%%%%%%%%%%%%%%
%%%%%%%%%%%%%%%%%%%%%%%%%%%%%%%%%%%%%%%%%%%%%%%%%%%%%%%%%%%%%%%%%%%%%%%%%%%%%%%%%%%%%%%%%%%%%%%%%%%%%%%%%%%%%%%%%%%%%%%%%%%%%%%%%%%%%%%%
%%%%%%%%%%%%%%%%%%%%%%%%%%%%%%%%%%%%%%%%%%%%%%%%%%%%%%%%%%%%%%%%%%%%%%%%%%%%%%%%%%%%%%%%%%%%%%%%%%%%%%%%%%%%%%%%%%%%%%%%%%%%%%%%%%%%%%%%
%%%%%%%%%%%%%%%%%%%%%%%%%%%%%%%%%%%%%%%%%%%%%%%%%%%%%%%%%%%%%%%%%%%%%%%%%%%%%%%%%%%%%%%%%%%%%%%%%%%%%%%%%%%%%%%%%%%%%%%%%%%%%%%%%%%%%%%%

\begin{section}{Appendix}

\subsection{Fractional Sobolev spaces}\label{sect:appendix}

We collect in this section some auxiliary results concerning fractional Sobolev spaces
which are needed in Section~\ref{sect:locmin}.
The statements are the same as in \cite[Section~8.1]{FM},
rephrased to consider also the case of dimension $N=3$.

Fix a periodic function $h\in C^1_\#(Q)$, $h>0$.
We denote by $c_0$ a positive constant such that $\min_{\overline{Q}} h\geq c_0$.
We recall that the Gagliardo seminorm of a function $\vartheta$ on $\Gamma_h$ is defined as
$$
[\vartheta]_{s,p,\Gamma_h} := \Bigl(
\int_{\Gamma_h}\int_{\Gamma_h} \frac{|\vartheta(z)-\vartheta(w)|^p}{|z-w|^{N-1+sp}}
\,d\hn(z)\,d\hn(w) \Bigr)^{\frac1p}
$$
for $0<s<1$ and $1<p<\infty$, and that $\vartheta\in W^{s,p}(\Gamma_h)$ if
$$
\|\vartheta\|_{W^{s,p}(\Gamma_h)}:=\|\vartheta\|_{L^p(\Gamma_h)} + [\vartheta]_{s,p,\Gamma_h}<\infty.
$$
We denote by $W^{s,p}_\#(\Gamma_h)$ the subspace of functions $\vartheta\in W^{s,p}(\Gamma_h)$
whose periodic extension to $\Gamma_h^\#$ belongs to $W^{s,p}_{\rm loc}(\Gamma_h^\#)$, endowed with the same norm.
The dual spaces of $W^{s,p}(\Gamma_h)$ and of $W^{s,p}_\#(\Gamma_h)$
are denoted by $W^{-s,\frac{p}{p-1}}(\Gamma_h)$ and $W^{-s,\frac{p}{p-1}}_\#({\Gamma_h})$, respectively.
When $p=2$ we switch to the notation $H^s(\Gamma_h)$ for $W^{s,2}(\Gamma_h)$
(and similarly for the other spaces).

\begin{remark} \label{rm:sobolev}
We remark that, if $-1 < t \leq s < 1$ and $p>1$,
the space $W^{s,p}(\Gamma_h)$ is continuously imbedded in $W^{t,p}(\Gamma_h)$.
This follows directly from the definition.
\end{remark}

\begin{theorem}\label{teo:sobolev1}
If $-1\leq t \leq s \leq 1$, $q\geq p$ and $s-\frac{N-1}{p}\geq t-\frac{N-1}{q}$,
then $W^{s,p}(\Gamma_h)$ is continuously imbedded in $W^{t,q}(\Gamma_h)$.
The imbedding constant depends only on $s$, $t$, $p$, $q$ and on the $C^1$-norm of $h$.
\end{theorem}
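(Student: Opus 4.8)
The plan is to reduce the statement to the analogous embedding for the flat model (the torus $\R^{N-1}/\Z^{N-1}$, or equivalently the unit cube $Q$ for the periodic spaces), where it is classical, by exploiting the bi-Lipschitz invariance of the fractional Sobolev scale. More precisely, consider the graph map $\Psi(x):=(x,h(x))$: by periodicity of $h$ it induces a bi-Lipschitz homeomorphism between the flat torus and $\Gamma_h^\#$, and since $h\in C^1_\#(Q)$ the surface Jacobian $J\Psi=\sqrt{1+|\nabla h|^2}$ is bounded above and below, while $c^{-1}|x-y|\le|\Psi(x)-\Psi(y)|\le c|x-y|$ with $c$ depending only on $\|\nabla h\|_{\infty}\le\|h\|_{C^1}$. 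I would then show that $\vartheta\mapsto\vartheta\circ\Psi$ is an isomorphism from $W^{s,p}(\Gamma_h)$ onto $W^{s,p}(Q)$ (and likewise for the periodic versions), with norms controlled by $s$, $p$ and $\|h\|_{C^1}$, for every $|s|\le1$: for $s=0$ this is the comparability of $\hn$ on $\Gamma_h$ with Lebesgue measure on $Q$ under $\Psi$; for $0<s<1$ it follows by the change of variables $z=\Psi(x)$, $w=\Psi(y)$ in the Gagliardo double integral defining $[\,\cdot\,]_{s,p,\Gamma_h}$, using the two-sided comparison of $|\Psi(x)-\Psi(y)|$ with $|x-y|$; for $s=1$ it is the chain rule (this is where $C^1$-regularity of $h$ is used); and for $-1\le s<0$ it follows by duality from the case $0<-s\le1$. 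It therefore suffices to prove the stated embedding on the flat model, where from now on we set $d:=N-1$.

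On the flat model I would split into three cases according to the signs of $s$ and $t$. (i) If $0\le t\le s\le1$, this is the classical Sobolev--Slobodeckij embedding on a compact manifold without boundary: when $s,t\in(0,1)$ it is the Besov embedding $B^s_{p,p}\hookrightarrow B^t_{q,q}$ valid under $p\le q$ and $s-d/p\ge t-d/q$, while the endpoint subcases in which $s$ or $t$ equals $0$ or $1$ reduce to it via the elementary inclusions $W^{1,p}\hookrightarrow W^{s',p}$ ($0<s'<1$) and $W^{s',p}\hookrightarrow L^p$ ($s'>0$), except in the borderline subcase $s=1$, $1-d/p=t-d/q$, where one invokes instead the integer Sobolev embedding $W^{1,p}\hookrightarrow L^{dp/(d-p)}$ when $p<d$. (ii) If $-1\le t\le s\le0$, then $W^{t,q}$ and $W^{s,p}$ are, by definition, the duals of $W^{-t,q'}$ and $W^{-s,p'}$, with $q'=q/(q-1)$, $p'=p/(p-1)$; for $f$ smooth and $\phi$ with $\|\phi\|_{W^{-t,q'}}\le1$ one estimates $\langle f,\phi\rangle\le\|f\|_{W^{s,p}}\,\|\phi\|_{W^{-s,p'}}\le C\,\|f\|_{W^{s,p}}$, the last step being the embedding $W^{-t,q'}\hookrightarrow W^{-s,p'}$ furnished by case (i), which applies since $0\le-s\le-t\le1$, $q'\le p'$, and the inequality $-t-d/q'\ge-s-d/p'$ is equivalent to $s-d/p\ge t-d/q$; taking the supremum over such $\phi$ and using density of smooth functions gives the conclusion.

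(iii) If $t\le0\le s$, I would pick an exponent $r$ in the interval $[\,\max(p,\tfrac{dq}{d-tq}),\,\min(q,\tfrac{dp}{d-sp})\,]$, with the convention $\tfrac{dp}{d-sp}=+\infty$ when $sp\ge d$; a short computation shows this interval is nonempty precisely because of the hypothesis $s-d/p\ge t-d/q$ (together with $sp\ge0$ and $tq\le0$). Then $W^{s,p}\hookrightarrow L^r$ by case (i) with target index $0$, and $L^r\hookrightarrow W^{t,q}$ by case (ii) with source index $0$, and composing these two embeddings yields the assertion, with constant controlled by $s,t,p,q$ and, through the first step, $\|h\|_{C^1}$.

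The change-of-variables transfer to the flat model is routine; the delicate part is the endpoint bookkeeping: verifying that the dimensional inequality $s-d/p\ge t-d/q$ is preserved under dualization and singles out an admissible intermediate exponent $r$, and, above all, treating the cases in which $s$ or $t$ is an integer, where $W^{s,p}$ is a Bessel-potential/Sobolev space rather than a Besov space, so that the fractional embedding does not apply verbatim and the classical integer-order Sobolev inequalities must be used instead.
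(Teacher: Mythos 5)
Your proposal is correct and follows essentially the same route as the paper: the paper's entire proof is the one-line remark that the statement ``follows from \cite[Theorem~1.4.4.1]{Gri} by a change of variables, and taking into account Remark~\ref{rm:sobolev},'' i.e.\ exactly your reduction to the flat model via the graph parametrization, with the flat-case embedding delegated to Grisvard. The only difference is that you additionally sketch the flat-model embedding itself (the duality argument for negative orders, the intermediate exponent for the mixed-sign case, and the integer endpoints), details the paper leaves to the cited reference.
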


In particular, it follows that if $N\leq3$ then $H^1(\Gamma_h)$ is continuously imbedded in $L^q(\Gamma_h)$ for every $q\geq1$.
The proof of the theorem follows from \cite[Theorem~1.4.4.1]{Gri} by a change of variables, and taking into account Remark~\ref{rm:sobolev}.
The following theorem, which follows from \cite[Theorem~1.5.1.2]{Gri}, deals with the trace operator on $\Gamma_h$.

\begin{theorem}\label{teo:sobolev2}
There exists a continuous linear operator $T:W^{1,p}(\Omega_h)\to W^{1-\frac1p,p}(\Gamma_h)$
such that $Tu=u|_{\Gamma_h}$ whenever $u$ is continuous on $\overline{\Omega}_h$.
The norm of $T$ is bounded by a constant depending only on $p$, $c_0$, and on the $C^1$-norm of $h$.
\end{theorem}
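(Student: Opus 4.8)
The plan is to reduce to the classical trace theorem on a cylinder by a $C^1$ change of variables that flattens the graph $\Gamma_h$, exactly in the spirit of the proof of Theorem~\ref{teo:sobolev1}. First I would introduce the flattening diffeomorphism $\Psi:\overline{Q}\times[0,1]\to\overline{\Omega}_h$, $\Psi(x,t):=(x,th(x))$, which maps $Q\times\{1\}$ onto $\Gamma_h$ and has Jacobian matrix $\left(\begin{smallmatrix} I & 0\\ t\nabla h & h\end{smallmatrix}\right)$ of determinant $h\geq c_0>0$; thus $\Psi$ and $\Psi^{-1}$ are of class $C^1$ with derivatives bounded in terms of $c_0=\min_{\overline{Q}}h$ and $\|h\|_{C^1}$ only. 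Pulling back by $\Psi$ gives a bounded linear isomorphism $u\mapsto u\circ\Psi$ between $W^{1,p}(\Omega_h)$ and $W^{1,p}(Q\times(0,1))$, with isomorphism constants depending only on $p$, $c_0$ and $\|h\|_{C^1}$.

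Next I would treat the boundary spaces. The restriction $x\mapsto\Psi(x,1)=(x,h(x))$ is a bi-Lipschitz homeomorphism of $Q$ onto $\Gamma_h$ whose bi-Lipschitz constant depends only on $\|\nabla h\|_\infty$, since $|(x,h(x))-(x',h(x'))|$ is comparable to $|x-x'|$. Because the Gagliardo seminorm $[\,\cdot\,]_{1-\frac1p,p,\cdot}$ transforms under such a change of variables with multiplicative constants depending only on that bi-Lipschitz constant and on $p$ and $N$ (a routine estimate on the defining double integral), the map $\vartheta\mapsto\vartheta\circ\Psi(\cdot,1)$ is a bounded isomorphism between $W^{1-\frac1p,p}(\Gamma_h)$ and $W^{1-\frac1p,p}(Q)$, again with constants controlled by $p$ and $\|h\|_{C^1}$.

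Then I would invoke the classical trace theorem on the cylinder, namely \cite[Theorem~1.5.1.2]{Gri}: there is a bounded linear operator $T_0:W^{1,p}(Q\times(0,1))\to W^{1-\frac1p,p}(Q)$ of norm depending only on $p$ and $N$, which agrees with the restriction to $Q\times\{1\}$ on functions continuous up to the boundary. Defining $T$ by transporting $T_0$ through the two isomorphisms above, that is $Tu:=\bigl(T_0(u\circ\Psi)\bigr)\circ\Psi(\cdot,1)^{-1}$, produces the desired operator: its norm is the product of the three controlled constants, hence depends only on $p$, $c_0$ and $\|h\|_{C^1}$, and if $u$ is continuous on $\overline{\Omega}_h$ then at each step one is merely composing and restricting, so $Tu=u|_{\Gamma_h}$.

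I expect the only point genuinely requiring care to be the uniformity of constants when the change of variables acts on the \emph{fractional}-order space $W^{1-\frac1p,p}$ on the curved surface $\Gamma_h$: one must check that only the bi-Lipschitz constant of $x\mapsto(x,h(x))$ — hence only $\|\nabla h\|_\infty$ — enters the transformation estimates, and not any higher regularity of $h$. This is handled by a direct change of variables in the Gagliardo double integral, after which everything reduces to the standard transplantation of Grisvard's result and is entirely analogous to the argument behind Theorem~\ref{teo:sobolev1}.
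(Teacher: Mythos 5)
Your argument is correct and is precisely the route the paper intends: the paper gives no proof beyond citing \cite[Theorem~1.5.1.2]{Gri}, and your flattening by $\Psi(x,t)=(x,th(x))$ together with the bi-Lipschitz transformation of the Gagliardo seminorm is the standard change-of-variables reduction to that result (the same device the paper invokes for Theorem~\ref{teo:sobolev1}). The uniformity of the constants in $p$, $c_0$ and $\|h\|_{C^1}$ is tracked correctly at each stage.
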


Denoting by $W^{1,p}_\#(\Omega_h)$ the space of functions $u\in W^{1,p}(\Omega_h)$
whose periodic extension to $\Omega_h^\#$ belongs to $W^{1,p}_{\rm loc}(\Omega_h^\#)$,
we have in particular that, if $u\in W^{1,p}_\#(\Omega_h)$, then $Tu\in W^{1-\frac1p,p}_\#(\Gamma_h)$.
\emph{Viceversa}, we have the following extension theorem.
%(the nontrivial part is related with the periodicity of the functions involved).

\begin{theorem}\label{teo:sobolev3}
For every $\vartheta\in W^{1-\frac1p,p}_\#(\Gamma_h)$
there exists $u\in W^{1,p}_\#(\Omega_h)$ such that $Tu=\vartheta$ and
\begin{equation} \label{eq:sobolev}
\|u\|_{W^{1,p}(\Omega_h)}\leq C\|\vartheta\|_{W^{1-\frac1p,p}(\Gamma_h)},
\end{equation}
where $C$ depends only on $p$, $c_0$, and on the $C^1$-norm of $h$.
\end{theorem}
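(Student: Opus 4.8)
The plan is to reduce the periodic extension statement to a local one and invoke the classical trace/extension theorem for Sobolev spaces on bounded Lipschitz domains (Grisvard, \cite[Theorem~1.5.1.2]{Gri}), exactly as was done for Theorem~\ref{teo:sobolev2}. First I would straighten the boundary: the map $\Psi(x,y) := (x, y/h(x))$ sends $\Omega_h$ to the flat cylinder $Q\times(0,1)$ and $\Gamma_h$ to $Q\times\{1\}$. Since $h\in C^1_\#(Q)$ with $\min h\geq c_0>0$, this change of variables is bi-Lipschitz with constants controlled by $c_0$ and $\|h\|_{C^1}$, and it preserves (up to equivalent norms, with constants depending only on these same quantities) both the spaces $W^{1,p}(\Omega_h)$ and the fractional spaces $W^{1-\frac1p,p}(\Gamma_h)$; periodicity in the first $N-1$ variables is preserved as well. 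So it suffices to produce an extension operator from $W^{1-\frac1p,p}_\#(Q\times\{1\})$ to $W^{1,p}_\#(Q\times(0,1))$.

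Next I would handle the periodicity. Given $\vartheta\in W^{1-\frac1p,p}_\#(\Gamma_h)$, transport it to a one-periodic function $\tilde\vartheta$ on $\R^{N-1}$ (identified with $Q\times\{1\}$) lying in $W^{1-\frac1p,p}_{loc}$. Restricting to an enlarged cube, say $(-1,2)^{N-1}$, I would apply Grisvard's extension theorem on the bounded Lipschitz domain $(-1,2)^{N-1}\times(0,1)$ to obtain $U\in W^{1,p}((-1,2)^{N-1}\times(0,1))$ with trace $\tilde\vartheta$ on the top face and $\|U\|_{W^{1,p}}\leq C\|\tilde\vartheta\|_{W^{1-\frac1p,p}}$, with $C$ universal. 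The issue is that $U$ is not periodic. To fix this I would use a partition of unity adapted to the periodic structure: choose $\zeta\in C^\infty_c((-1,2)^{N-1})$ with $\sum_{k\in\Z^{N-1}}\zeta(\cdot-k)\equiv 1$ on $\R^{N-1}$ and $\zeta$ supported so that only finitely many translates overlap any given cube, and set $u(x,y):=\sum_{k\in\Z^{N-1}}\zeta(x-k)\,U(x-k+k',y)$ where $k'$ is chosen to keep the argument in the domain of $U$; more simply, since $\tilde\vartheta$ is periodic one may average: define $u$ on $Q\times(0,1)$ by $u(x,y)=\sum_k \zeta(x-k) U^{(k)}(x,y)$ where $U^{(k)}$ is $U$ translated so its relevant piece sits over $Q$. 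Because $\tilde\vartheta$ is genuinely one-periodic, the traces of the various translated pieces agree on overlaps up to the partition-of-unity weights summing to one, so the trace of $u$ on $Q\times\{1\}$ equals $\tilde\vartheta|_Q$; and extending $u$ periodically gives a function in $W^{1,p}_{loc}$ of the periodic half-cylinder, i.e.\ in $W^{1,p}_\#$. The $W^{1,p}$ bound is inherited from that of $U$ because only boundedly many translates contribute at each point and $\|\zeta\|_{C^1}$ is fixed.

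Finally I would transport $u$ back through $\Psi^{-1}$ to obtain the desired $u\in W^{1,p}_\#(\Omega_h)$ with $Tu=\vartheta$ and the estimate \eqref{eq:sobolev}, tracking that every constant introduced depends only on $p$, $c_0$, and $\|h\|_{C^1}$ — the bi-Lipschitz constants of $\Psi$, the universal constant from Grisvard's theorem on the fixed enlarged cube, and $\|\zeta\|_{C^1}$. The main obstacle is the periodization step: one must arrange that gluing the (non-periodic) local extension pieces together via a partition of unity does not destroy the prescribed boundary trace and does not blow up the $W^{1,p}$ norm. The cleanest way around this is precisely to exploit that the boundary datum is itself periodic, so that after translation the pieces being glued have \emph{matching} traces on overlaps, making the glued trace equal to $\vartheta$ identically rather than merely up to error terms; with that observation the argument becomes a routine partition-of-unity estimate.
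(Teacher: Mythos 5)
Your construction is essentially the proof the paper intends but omits: Theorem~\ref{teo:sobolev3} is stated without proof, with the appendix pointing to \cite[Section~8.1]{FM} and, for the companion trace theorem, to \cite[Theorem~1.5.1.2]{Gri}, and the intended argument is exactly your combination of flattening via $(x,y)\mapsto(x,y/h(x))$, the right inverse of the trace operator on a fixed Lipschitz reference domain, and periodization by a partition of unity whose translated pieces have matching traces. The one step you pass over silently, and which is the only non-routine part of the quantitative statement, is the inequality $\|\tilde\vartheta\|_{W^{1-\frac1p,p}((-1,2)^{N-1})}\leq C\|\vartheta\|_{W^{1-\frac1p,p}(Q)}$ for the periodic extension: since $s=1-\frac1p$ gives $sp=p-1>1$, the Gagliardo seminorm over the enlarged cube is not formally a sum of single-cell seminorms, and the cross terms between adjacent cells must be controlled using the fact that the one-sided traces of $\tilde\vartheta$ across each cell face coincide (a fractional Hardy-type estimate); without this, the constant in \eqref{eq:sobolev} would not be expressed in terms of the single-cell norm as the statement requires. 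A second, minor, point to record is that \cite[Theorem~1.5.1.2]{Gri} prescribes the trace on the whole boundary of the reference box, so one should first cut off $\tilde\vartheta$ near the lateral edges of the top face (harmless, since your partition of unity only uses the region where the cutoff equals one) before invoking it.
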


We now state the 3-dimensional version of \cite[Theorem~8.6]{FM}.

\begin{theorem}\label{teo:sobolev4}
Let $N=3$. For every $u\in W^{1,p}_\#(\Omega_h)$ and for $i=1,2$
$$
\Big\| \frac{\partial u}{\partial z_i}\nu_h^3 - \frac{\partial u}{\partial z_3}\nu_h^i \Big\|_{W^{-\frac1p,p}_\#(\Gamma_h)}
\leq C\|\nabla u\|_{L^p(\Omega_h;\R^3)}
$$
where $C$ depends only on $p$, $c_0$, and on the $C^1$-norm of $h$.
\end{theorem}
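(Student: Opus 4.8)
The plan is to bound the $W^{-\frac1p,p}_\#(\Gamma_h)$-norm by duality: we test against a smooth periodic function $\phi$ on $\Gamma_h$, extend it to the interior of $\Omega_h$, and use the divergence theorem to rewrite the resulting surface pairing as a volume integral in which only $\nabla u$ (and not $u$ itself) appears. Fix $i\in\{1,2\}$ and let $X_u$ be the vector field on $\Omega_h$ with components $(X_u)_i=-\frac{\partial u}{\partial z_3}$, $(X_u)_3=\frac{\partial u}{\partial z_i}$ and $(X_u)_j=0$ for $j\notin\{i,3\}$. For $u\in W^{1,p}_\#(\Omega_h)$ one has $X_u\in L^p(\Omega_h;\R^3)$ and $\div X_u=0$ in the sense of distributions; when $u$ is smooth, its normal component on $\Gamma_h$ is exactly $X_u\cdot\nu_h=\frac{\partial u}{\partial z_i}\nu_h^3-\frac{\partial u}{\partial z_3}\nu_h^i$. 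This is the three-dimensional substitute for the tangential derivative appearing in \cite[Theorem~8.6]{FM}: the point is that this particular combination of the partial derivatives of $u$ is a \emph{curl-type} (divergence-free) object, which is what allows us to integrate by parts in the bulk without paying for the trace of $u$.

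Set $q:=\frac{p}{p-1}$, so that $1-\frac1q=\frac1p$ and $W^{-\frac1p,p}_\#(\Gamma_h)=\bigl(W^{\frac1p,q}_\#(\Gamma_h)\bigr)^\ast$. Given a smooth periodic $\phi$ on $\Gamma_h$ (such functions being dense in $W^{\frac1p,q}_\#(\Gamma_h)$), apply Theorem~\ref{teo:sobolev3} with $q$ in place of $p$ to obtain $\Phi\in W^{1,q}_\#(\Omega_h)$ with $T\Phi=\phi$ and $\|\Phi\|_{W^{1,q}(\Omega_h)}\leq C\|\phi\|_{W^{\frac1p,q}(\Gamma_h)}$; multiplying $\Phi$ by a cut-off $\chi=\chi(z_3)$ with $\chi\equiv1$ on $\{z_3\geq\frac{c_0}{2}\}$ and $\chi\equiv0$ near $\{z_3=0\}$ — which, since $h\geq c_0$, leaves the trace on $\Gamma_h$ unchanged and only costs a factor depending on $c_0$ — we may assume in addition that $\Phi$ vanishes in a neighbourhood of $\{z_3=0\}$. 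Approximating $u$ by periodic functions $u_k$ of class $C^\infty$ with $u_k\to u$ in $W^{1,p}(\Omega_h)$ and applying the divergence theorem to $\Phi X_{u_k}\in W^{1,q}(\Omega_h;\R^3)$ on the Lipschitz set $\Omega_h$, we obtain, since $\div X_{u_k}=0$, since the lateral boundary contributions cancel by periodicity, and since the contribution on $\{z_3=0\}$ vanishes because $\Phi\equiv0$ there,
\begin{equation*}
\int_{\Gamma_h}\Bigl(\frac{\partial u_k}{\partial z_i}\nu_h^3-\frac{\partial u_k}{\partial z_3}\nu_h^i\Bigr)\phi\,d\hn
=\int_{\Omega_h}\Bigl(\frac{\partial\Phi}{\partial z_3}\frac{\partial u_k}{\partial z_i}-\frac{\partial\Phi}{\partial z_i}\frac{\partial u_k}{\partial z_3}\Bigr)\,dz.
\end{equation*}
As $\nabla u_k\to\nabla u$ in $L^p$ and $\nabla\Phi\in L^q$, the right-hand side converges; hence so does the left-hand side, and in the limit we get the same identity for $u$, whose right-hand side moreover does not depend on the chosen extension $\Phi$. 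This simultaneously legitimises the notation $\frac{\partial u}{\partial z_i}\nu_h^3-\frac{\partial u}{\partial z_3}\nu_h^i$ for the corresponding element of $W^{-\frac1p,p}_\#(\Gamma_h)$.

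To conclude, Hölder's inequality applied to the identity for $u$ gives
\begin{equation*}
\Bigl|\int_{\Gamma_h}\Bigl(\frac{\partial u}{\partial z_i}\nu_h^3-\frac{\partial u}{\partial z_3}\nu_h^i\Bigr)\phi\,d\hn\Bigr|
\leq\|\nabla\Phi\|_{L^q(\Omega_h)}\,\|\nabla u\|_{L^p(\Omega_h;\R^3)}
\leq C\,\|\phi\|_{W^{\frac1p,q}(\Gamma_h)}\,\|\nabla u\|_{L^p(\Omega_h;\R^3)},
\end{equation*}
and taking the supremum over all smooth periodic $\phi$ with $\|\phi\|_{W^{\frac1p,q}(\Gamma_h)}\leq1$, using their density in $W^{\frac1p,q}_\#(\Gamma_h)$ and the duality recalled above, yields the asserted estimate; the constant $C$ depends only on $p$, $c_0$ and the $C^1$-norm of $h$, being inherited from Theorem~\ref{teo:sobolev3} and from the (dimensionally fixed) choice of $\chi$. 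The step I expect to be the main obstacle is the rigorous justification of the Gauss--Green identity above in this low-regularity setting, together with the correct bookkeeping of the boundary terms (cancellation on the lateral faces by periodicity, and vanishing on $\{z_3=0\}$ thanks to the cut-off, which is needed precisely because no Dirichlet condition on the bottom face is assumed here); once this is settled, the remainder of the argument is routine and reproduces, in three dimensions, the computation behind \cite[Theorem~8.6]{FM}.
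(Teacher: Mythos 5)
Your proposal is correct and follows essentially the same route as the paper's proof: the same divergence-free vector field $(-\partial_{z_3}u,0,\partial_{z_i}u)$ (suitably permuted), the same extension of the test function via Theorem~\ref{teo:sobolev3} arranged to vanish on $\{y=0\}$, the same integration by parts in the bulk, and the same H\"older estimate, with the smoothing of $u$ handled by the approximation argument the paper also invokes. The only differences are presentational (explicit cut-off versus the paper's remark that the extension may be taken to vanish at the bottom, and mollifying $u$ before rather than after the computation).
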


\begin{proof}
Assume $u\in C^2(\overline{\Omega}_h)$.
Given $\vphi\in W^{\frac1p,\frac{p}{p-1}}_\#(\Gamma_h)$
we consider an extension in $W^{1,\frac{p}{p-1}}_\#(\Omega_h)$ (still denoted by $\vphi$),
according to Theorem~\ref{teo:sobolev3}.
We may also assume, by increasing the constant in \eqref{eq:sobolev}, that $\vphi(x,0)=0$.
Then
\begin{align*}
\int_{\Gamma_h} \Bigl( \frac{\partial u}{\partial z_1}\nu_h^3 &- \frac{\partial u}{\partial z_3}\nu_h^1 \Bigr) \vphi \,d\mathcal{H}^2
= \int_{\Gamma_h} \vphi \Bigl( -\frac{\partial u}{\partial z_3},0,\frac{\partial u}{\partial z_1} \Bigr)\cdot\nu\,d\mathcal{H}^2 \\
&= \int_{\Omega_h} \div\Bigl( -\vphi\frac{\partial u}{\partial z_3},0,\vphi\frac{\partial u}{\partial z_1}\Bigr) \,dz
= \int_{\Omega_h} \nabla u\cdot \Bigl( \frac{\partial\vphi}{\partial z_3},0,-\frac{\partial\vphi}{\partial z_1}\Bigr) \,dz \\
& \leq \|\nabla u\|_{L^p(\Omega_h;\R^3)}\|\nabla\vphi\|_{L^{\frac{p}{p-1}}(\Omega_h;\R^3)}
\leq C \, \|\nabla u\|_{L^p(\Omega_h;\R^3)}\|\vphi\|_{W^{\frac1p,\frac{p}{p-1}}(\Gamma_h)}
\end{align*}
and this shows the claim in the case $i=1$. The case $i=2$ is similar,
and an approximation argument concludes the proof of the theorem.
\end{proof}

We conclude this section with two lemmas which will be used several times in the proof of Theorem~\ref{teo:locmin}.
The proof of the first one follows directly from the definition of the Gagliardo seminorm.

\begin{lemma} \label{lem:sobolev}
Let $p>1$ and let $u$ be a smooth function.
Then:
\begin{itemize}
  \item [(i)] if $a\in C^{0,\alpha}(\Gamma_h)$ with $\alpha>\frac1p$, then $\|ua\|_{W^{-\frac1p,p}(\Gamma_h)}\leq C \|a\|_{C^{0,\alpha}(\Gamma_h)}\|u\|_{W^{-\frac1p,p}(\Gamma_h)}$, for some constant $C$ depending only on $p$, $\alpha$ and on the $C^1$-norm of $h$;
  \item [(ii)] if $\Phi:\Gamma_h\to\Phi(\Gamma_h)$ is a $C^1$-diffeomorphism, then $\|u\circ\Phi^{-1}\|_{W^{-\frac1p,p}(\Phi(\Gamma_h))}\leq C \|u\|_{W^{-\frac1p,p}(\Gamma_h)}$, for some constant $C$ depending only on $p$ and on the $C^1$-norms of $\Phi$ and of $\Phi^{-1}$.
\end{itemize}

\end{lemma}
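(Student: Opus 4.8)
The plan is to prove both statements by duality. Since $W^{-\frac1p,p}(\Gamma_h)$ is, by definition, the dual of $W^{\frac1p,\frac{p}{p-1}}(\Gamma_h)$ with respect to the pairing $\langle f,\varphi\rangle=\int_{\Gamma_h}f\varphi\,d\hn$ (for $f$ smooth), one has for $u$ smooth
\[
\|ua\|_{W^{-\frac1p,p}(\Gamma_h)}=\sup\Bigl\{\int_{\Gamma_h}u\,(a\varphi)\,d\hn:\ \|\varphi\|_{W^{\frac1p,\frac{p}{p-1}}(\Gamma_h)}\le 1\Bigr\}\le \|u\|_{W^{-\frac1p,p}(\Gamma_h)}\;\sup_{\|\varphi\|_{W^{\frac1p,\frac{p}{p-1}}}\le 1}\|a\varphi\|_{W^{\frac1p,\frac{p}{p-1}}(\Gamma_h)},
\]
so that (i) reduces to the \emph{multiplier estimate}: multiplication by $a$ is a bounded operator on $W^{\frac1p,\frac{p}{p-1}}(\Gamma_h)$ with operator norm $\le C\,\|a\|_{C^{0,\alpha}(\Gamma_h)}$. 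Likewise, for (ii) the area formula gives $\int_{\Phi(\Gamma_h)}(u\circ\Phi^{-1})\psi\,d\hn=\int_{\Gamma_h}u\,\bigl((\psi\circ\Phi)\,J_\Phi\bigr)\,d\hn$, where $J_\Phi$ denotes the tangential Jacobian of $\Phi$; hence the same duality reduces (ii) to the bound $\|(\psi\circ\Phi)J_\Phi\|_{W^{\frac1p,\frac{p}{p-1}}(\Gamma_h)}\le C\,\|\psi\|_{W^{\frac1p,\frac{p}{p-1}}(\Phi(\Gamma_h))}$.

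First I would prove the multiplier estimate, which is a direct computation with the Gagliardo seminorm. Write $q:=\frac{p}{p-1}$; the $L^q$-bound $\|a\varphi\|_{L^q}\le\|a\|_\infty\|\varphi\|_{L^q}$ is trivial, and for the seminorm I use the Leibniz splitting $a(z)\varphi(z)-a(w)\varphi(w)=a(z)\bigl(\varphi(z)-\varphi(w)\bigr)+\bigl(a(z)-a(w)\bigr)\varphi(w)$. The first summand contributes at most $\|a\|_\infty^q\,[\varphi]_{\frac1p,q,\Gamma_h}^q$; in the second summand I insert $|a(z)-a(w)|\le\|a\|_{C^{0,\alpha}}|z-w|^\alpha$ and am left with
\[
\|a\|_{C^{0,\alpha}}^q\int_{\Gamma_h}|\varphi(w)|^q\Bigl(\int_{\Gamma_h}|z-w|^{-(N-1)+(\alpha-\frac1p)q}\,d\hn(z)\Bigr)\,d\hn(w).
\]
Since $\alpha>\frac1p$, the exponent $-(N-1)+(\alpha-\frac1p)q$ of $|z-w|$ exceeds $-(N-1)$, so the power is locally integrable over the $(N-1)$-dimensional surface $\Gamma_h$ and the inner integral is bounded by a constant depending only on $p$, $\alpha$ and the $C^1$-norm of $h$; this yields $C\,\|a\|_{C^{0,\alpha}}^q\,\|\varphi\|_{L^q(\Gamma_h)}^q$ and completes (i).

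For (ii) I would treat the two factors in $(\psi\circ\Phi)J_\Phi$ separately. The composition $\psi\mapsto\psi\circ\Phi$ is estimated by changing variables $\zeta=\Phi(z)$, $\omega=\Phi(w)$ in the double integral defining the Gagliardo seminorm: the bi-Lipschitz bounds $c|z-w|\le|\Phi(z)-\Phi(w)|\le C|z-w|$ together with the boundedness of $J_{\Phi^{-1}}$ (all of which follow from $\Phi,\Phi^{-1}\in C^1$) give $\|\psi\circ\Phi\|_{W^{\frac1p,q}(\Gamma_h)}\le C\,\|\psi\|_{W^{\frac1p,q}(\Phi(\Gamma_h))}$, the $L^q$-part being again a plain change of variables. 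Multiplication by the positive bounded factor $J_\Phi$ is then absorbed by the multiplier estimate proved for (i). Chaining the two bounds and undoing the duality gives (ii).

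\textbf{The main obstacle}, and really the only delicate point, is the Gagliardo-seminorm computation behind the multiplier estimate: the ``Hölder part'' of the Leibniz splitting produces the singular kernel $|z-w|^{-(N-1)+(\alpha-\frac1p)q}$, whose local integrability on the $(N-1)$-dimensional surface $\Gamma_h$ is precisely equivalent to $\alpha>\frac1p$, and this threshold is sharp. The same mechanism is the one caveat hidden in (ii): the Jacobian $J_\Phi$ must be a $W^{\frac1p,q}$-multiplier, which holds as soon as $\nabla\Phi$ is Hölder continuous of some exponent $>\frac1p$ — automatic whenever $\Phi\in C^{1,\alpha}$ with $\alpha>\frac1p$, which is the situation in all the applications of the lemma (where $\Phi$ arises from a $W^{2,p}$-profile with $p>2N$). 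Everything else — the duality identities, the area formula, and the elementary $L^q$ changes of variables — is routine.
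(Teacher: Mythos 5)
Your argument is correct and is exactly what the paper has in mind: it gives no details beyond ``follows directly from the definition of the Gagliardo seminorm,'' and the duality reduction followed by the Leibniz splitting of the seminorm (with the threshold $\alpha>\frac1p$ making the kernel $|z-w|^{-(N-1)+(\alpha-\frac1p)q}$ integrable on the $(N-1)$-dimensional surface) is the standard way to fill them in. Your caveat on (ii) is well taken and is in fact a point where the lemma's statement is too optimistic: the duality route requires $J_\Phi$ to be a multiplier on $W^{\frac1p,\frac{p}{p-1}}(\Gamma_h)$, and already the test function $\psi\equiv1$ shows that one needs at least $J_\Phi\in W^{\frac1p,\frac{p}{p-1}}(\Gamma_h)$, which a merely continuous Jacobian need not satisfy; so the constant cannot depend only on the $C^1$-norms of $\Phi$ and $\Phi^{-1}$, but is controlled once $\nabla\Phi$ is H\"older continuous of exponent $>\frac1p$. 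Since every diffeomorphism to which the lemma is applied in the paper arises from a $W^{2,p}$-profile with $p>2N$ and is therefore uniformly bounded in $C^{1,\alpha}$ with $\alpha=1-\frac{N}{p}>\frac1p$, this imprecision is harmless for all the applications.
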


\begin{lemma}\label{lem:gagliardo}
Let $N\leq3$ and $\alpha>\frac12$.
If $\vphi\in H^1(\Gamma_h)$ and $u\in C^{0,\alpha}(\overline{\Omega}_h;\M^N)$, then
$$
\|\vphi u\|_{H^\frac12(\Gamma_h;\M^N)} \leq C \|\vphi\|_{H^1(\Gamma_h)} \|u\|_{C^{0,\alpha}(\overline{\Omega}_h;\M^N)}
$$
for some constant $C$ depending only on $\alpha$ and on the $C^1$-norm of $h$.
\end{lemma}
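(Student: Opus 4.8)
The plan is to control separately the two pieces of the $H^{1/2}(\Gamma_h;\M^N)$-norm of the product $\vphi u$, namely $\|\vphi u\|_{L^2(\Gamma_h)}$ and the Gagliardo seminorm $[\vphi u]_{\frac12,2,\Gamma_h}$ (recall that for $s=\frac12$, $p=2$ the kernel in the seminorm is $|z-w|^{-N}$). The $L^2$-part is immediate, since $\|\vphi u\|_{L^2(\Gamma_h;\M^N)}\le\|u\|_{L^\infty(\overline\Omega_h;\M^N)}\|\vphi\|_{L^2(\Gamma_h)}$ and the $C^{0,\alpha}$-norm dominates the sup-norm. For the seminorm I would use the elementary splitting
\[
\vphi(z)u(z)-\vphi(w)u(w)=\bigl(\vphi(z)-\vphi(w)\bigr)u(z)+\vphi(w)\bigl(u(z)-u(w)\bigr),
\]
together with $(a+b)^2\le2a^2+2b^2$, so that $[\vphi u]_{\frac12,2,\Gamma_h}^2$ is bounded by $2\|u\|_{L^\infty}^2\,[\vphi]_{\frac12,2,\Gamma_h}^2$ plus the ``Hölder'' contribution $2\int_{\Gamma_h}\int_{\Gamma_h}|\vphi(w)|^2\,|u(z)-u(w)|^2\,|z-w|^{-N}\,d\hn(z)\,d\hn(w)$.

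The first contribution is harmless: $[\vphi]_{\frac12,2,\Gamma_h}\le\|\vphi\|_{H^{\frac12}(\Gamma_h)}\le C\|\vphi\|_{H^1(\Gamma_h)}$ by the imbedding in Theorem~\ref{teo:sobolev1} (taken with $s=1$, $t=\frac12$, $p=q=2$). For the second one I would use that $\Gamma_h\subset\overline\Omega_h$, so that $|u(z)-u(w)|\le\|u\|_{C^{0,\alpha}(\overline\Omega_h;\M^N)}\,|z-w|^\alpha$ for $z,w\in\Gamma_h$; this reduces everything to the pointwise bound
\[
\sup_{w\in\Gamma_h}\int_{\Gamma_h}|z-w|^{2\alpha-N}\,d\hn(z)=:C_0<\infty ,
\]
after which the Hölder contribution is at most $2C_0\,\|u\|_{C^{0,\alpha}}^2\,\|\vphi\|_{L^2(\Gamma_h)}^2$, and collecting all the estimates yields the assertion with a constant depending only on $\alpha$ and on $\|h\|_{C^1}$.

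The uniform integrability displayed above is the heart of the argument and the only place where the hypothesis $\alpha>\frac12$ is used. Since $\Gamma_h$ is the graph of the $C^1$-function $h$ over $Q$, parametrising by $x\mapsto(x,h(x))$ one has $d\hn(z)\le(1+\|\nabla h\|_\infty^2)^{1/2}\,dx$ and $|z-w|\ge|\pi(z)-\pi(w)|$, so that (in the relevant singular case $2\alpha<N$) the integral is dominated by $C\int_{\{|y|\le\sqrt N\}}|y|^{2\alpha-N}\,dy$; passing to polar coordinates this equals $C'\int_0^{\sqrt N}\rho^{2\alpha-2}\,d\rho$, which is finite precisely because $2\alpha-2>-1$, and compactness of $\Gamma_h$ makes the bound uniform in $w$. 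I expect this computation — balancing the order of the singularity of the kernel against the Hölder gain $|z-w|^\alpha$ on the $(N-1)$-dimensional surface — to be the only genuinely delicate point; the remaining steps are purely algebraic, and in particular no regularisation of the merely Hölder continuous datum $u$ is needed, all the manipulations being pointwise inequalities.
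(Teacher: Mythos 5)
Your argument is correct, and its skeleton --- splitting $\vphi(z)u(z)-\vphi(w)u(w)$ into a $\vphi$-difference term and a $u$-difference term, absorbing the first via $\|u\|_\infty$ together with the imbedding $H^1(\Gamma_h)\hookrightarrow H^{\frac12}(\Gamma_h)$, and reducing the second to the integrability of the kernel $|z-w|^{2\alpha-N}$ on $\Gamma_h$ --- coincides with the paper's. Where you genuinely diverge is in how the second term is closed. The paper applies H\"older's inequality with an auxiliary exponent $q>2$ chosen so that $(2\alpha-1)q>2N-2$, controls $\|\vphi\|_{L^q(\Gamma_h)}$ by the Sobolev imbedding $H^1(\Gamma_h)\hookrightarrow L^q(\Gamma_h)$ (this is precisely where the restriction $N\leq3$ enters), and checks that the kernel belongs to $L^{\frac{q}{q-2}}(\Gamma_h\times\Gamma_h)$. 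You instead use the Schur-type bound $\int_{\Gamma_h}\int_{\Gamma_h}|\vphi(w)|^2K(z,w)\,d\hn(z)\,d\hn(w)\leq\bigl(\sup_w\int_{\Gamma_h}K(z,w)\,d\hn(z)\bigr)\|\vphi\|^2_{L^2(\Gamma_h)}$ with $K(z,w)=|z-w|^{2\alpha-N}$, whose finiteness is exactly equivalent to $\alpha>\frac12$ after the graph parametrization and the polar-coordinate computation you carry out (the exponent $\rho^{2\alpha-N}\rho^{N-2}=\rho^{2\alpha-2}$ is integrable at $0$ iff $2\alpha-2>-1$). This is more elementary and slightly stronger: it consumes only $\|\vphi\|_{L^2}$, dispenses with the Sobolev imbedding entirely, and hence proves the lemma in every dimension $N$, not just $N\leq3$, with a constant depending only on $\alpha$, $N$ and the $C^1$-norm of $h$, as required. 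Your explicit treatment of the $L^2$-part of the norm, which the paper leaves implicit, is also fine.
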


\begin{proof}
We can bound the Gagliardo $H^{\frac12}$-seminorm of $\vphi u$ as follows:
choosing $q>2$ such that $(2\alpha-1)q>2N-2$,
adding and subtracting the term $\vphi(z)u(w)$
and using H\"{o}lder inequality, we have
\begin{align*}
\bigl[ \vphi u \bigr]_{\frac12,2,\Gamma_h}^2
&\leq
 2\int_{\Gamma_h}\int_{\Gamma_h} \frac{ |\vphi(z)-\vphi(w)|^2 |u(w)|^2 }{ |z-w|^N } \,d\hn(z)\,d\hn(w) \nonumber\\
&\hspace{.5cm}+
 2\int_{\Gamma_h}\int_{\Gamma_h} \frac{ |\vphi(z)|^2 |u(z)-u(w)|^2 }{ |z-w|^N } \,d\hn(z)\,d\hn(w) \nonumber \\
&\leq
 2 \, \|u\|^2_{\infty} \|\vphi\|^2_{H^{\frac12}(\Gamma_h)}
 + 2 \|u\|^2_{C^{0,\alpha}}
  \int_{\Gamma_h}\int_{\Gamma_h} |\vphi(z)|^2 |z-w|^{2\alpha-N}\,d\hn(z)\,d\hn(w) \nonumber \\
&\leq
 2 \, \|u\|^2_{C^{0,\alpha}(\overline{\Omega}_h;\M^N)}
 \biggl[ \|\vphi\|_{H^{\frac12}(\Gamma_h)}^2 \nonumber \\
&\hspace{.5cm}
 +\|\vphi\|_{L^q(\Gamma_h)}^2\mathcal{H}^{N-1}(\Gamma_h)^{\frac2q}
 \Bigl( \int_{\Gamma_h}\int_{\Gamma_h} |z-w|^{\frac{q(2\alpha-N)}{q-2}} \,d\hn(x)\,d\hn(y) \Bigr)^{\frac{q-2}{q}} \biggr].
\end{align*}
Now the last integral is finite by the choice of $q$,
and the conclusion follows since $H^1(\Gamma_h)$ is continuously imbedded in $L^q(\Gamma_h)$ for every $q$.
\end{proof}

\subsection{Invertibility of the linear system appearing in Lemma~\ref{lem:algebra}}\label{sect:appdeterminante}

The final part of the second step in the proof of Lemma~\ref{lem:algebra}
requires to invert the relations determined by an $18\times18$ linear system
which we can write explicitly as
$$
\xi= M \sigma,
$$
where $\xi$ and $\sigma$ are the column vectors
\begin{align*}
\xi := \bigl(
\vartheta_{111}, \vartheta_{311}, \vartheta_{112}, \vartheta_{212}, \vartheta_{312}, \vartheta_{121},
\vartheta_{321}, \vartheta_{122}, \vartheta_{222}, &\vartheta_{322}, \vartheta_{131}, \vartheta_{331}, \\
&\vartheta_{132}, \vartheta_{232}, \vartheta_{332}, \eta_{13}, \eta_{23}, \eta_{33}
\bigr)^T,
\end{align*}
\begin{align*}
\sigma := \bigl(
\sigma_{111}, \sigma_{121}, \sigma_{131}, \sigma_{221}, \sigma_{231}, \sigma_{331},
\sigma_{112}, \sigma_{122}, \sigma_{132}, &\sigma_{222}, \sigma_{232}, \sigma_{332}, \\
&\sigma_{113}, \sigma_{123}, \sigma_{133}, \sigma_{223}, \sigma_{233}, \sigma_{333} \bigr)^T,
\end{align*}
and $M$ is the matrix
$$
\scriptscriptstyle
\left(
\begin{array}{cccccccccccccccccc}
 \nu_g^3 & 0 & -\nu_g^1 & 0 & 0 & 0 & 0 & 0 & 0 & 0 & 0 & 0 & 0 & 0 & 0 & 0 & 0 & 0 \\
 0 & 0 & \nu_g^3 & 0 & 0 & -\nu_g^1 & 0 & 0 & 0 & 0 & 0 & 0 & 0 & 0 & 0 & 0 & 0 & 0 \\
 0 & \nu_g^3 & -\nu_g^2 & 0 & 0 & 0 & 0 & 0 & 0 & 0 & 0 & 0 & 0 & 0 & 0 & 0 & 0 & 0 \\
 0 & 0 & 0 & \nu_g^3 & -\nu_g^2 & 0 & 0 & 0 & 0 & 0 & 0 & 0 & 0 & 0 & 0 & 0 & 0 & 0 \\
 0 & 0 & 0 & 0 & \nu_g^3 & -\nu_g^2 & 0 & 0 & 0 & 0 & 0 & 0 & 0 & 0 & 0 & 0 & 0 & 0 \\
 0 & 0 & 0 & 0 & 0 & 0 & \nu_g^3 & 0 & -\nu_g^1 & 0 & 0 & 0 & 0 & 0 & 0 & 0 & 0 & 0 \\
 0 & 0 & 0 & 0 & 0 & 0 & 0 & 0 & \nu_g^3 & 0 & 0 & -\nu_g^1 & 0 & 0 & 0 & 0 & 0 & 0 \\
 0 & 0 & 0 & 0 & 0 & 0 & 0 & \nu_g^3 & -\nu_g^2 & 0 & 0 & 0 & 0 & 0 & 0 & 0 & 0 & 0 \\
 0 & 0 & 0 & 0 & 0 & 0 & 0 & 0 & 0 & \nu_g^3 & -\nu_g^2 & 0 & 0 & 0 & 0 & 0 & 0 & 0 \\
 0 & 0 & 0 & 0 & 0 & 0 & 0 & 0 & 0 & 0 & \nu_g^3 & -\nu_g^2 & 0 & 0 & 0 & 0 & 0 & 0 \\
 0 & 0 & 0 & 0 & 0 & 0 & 0 & 0 & 0 & 0 & 0 & 0 & \nu_g^3 & 0 & -\nu_g^1 & 0 & 0 & 0 \\
 0 & 0 & 0 & 0 & 0 & 0 & 0 & 0 & 0 & 0 & 0 & 0 & 0 & 0 & \nu_g^3 & 0 & 0 & -\nu_g^1 \\
 0 & 0 & 0 & 0 & 0 & 0 & 0 & 0 & 0 & 0 & 0 & 0 & 0 & \nu_g^3 & -\nu_g^2 & 0 & 0 & 0 \\
 0 & 0 & 0 & 0 & 0 & 0 & 0 & 0 & 0 & 0 & 0 & 0 & 0 & 0 & 0 & \nu_g^3 & -\nu_g^2 & 0 \\
 0 & 0 & 0 & 0 & 0 & 0 & 0 & 0 & 0 & 0 & 0 & 0 & 0 & 0 & 0 & 0 & \nu_g^3 & -\nu_g^2 \\
 0 & 0 & a_1 & 0 & b_1 & c_1 & 0 & 0 & d_1 & 0 & e_1 & f_1 & 0 & 0 & g_1 & 0 & h_1 & i_1 \\
 0 & 0 & a_2 & 0 & b_2 & c_2 & 0 & 0 & d_2 & 0 & e_2 & f_2 & 0 & 0 & g_2 & 0 & h_2 & i_2 \\
 0 & 0 & a_3 & 0 & b_3 & c_3 & 0 & 0 & d_3 & 0 & e_3 & f_3 & 0 & 0 & g_3 & 0 & h_3 & i_3
\end{array}
\right)
$$
The coefficients in the last three rows of $M$ are defined by
\begin{align*}
\textstyle
a_j := \sum_{k=1}^3 C_{jk11}\nu_g^k, \quad
b_j := \sum_{k=1}^3 C_{jk12}\nu_g^k, \quad
c_j := \sum_{k=1}^3 C_{jk13}\nu_g^k, \\
\textstyle
d_j := \sum_{k=1}^3 C_{jk21}\nu_g^k, \quad
e_j := \sum_{k=1}^3 C_{jk22}\nu_g^k, \quad
f_j := \sum_{k=1}^3 C_{jk23}\nu_g^k, \\
\textstyle
g_j := \sum_{k=1}^3 C_{jk31}\nu_g^k, \quad
h_j := \sum_{k=1}^3 C_{jk32}\nu_g^k, \quad
i_j := \sum_{k=1}^3 C_{jk33}\nu_g^k,
%a_j= C_{j111}\nu_g^1 + C_{j211}\nu_g^2 + C_{j311}\nu_g^3, \quad
%b_j= C_{j112}\nu_g^1 + C_{j212}\nu_g^2 + C_{j312}\nu_g^3, \\
%c_j= C_{j113}\nu_g^1 + C_{j213}\nu_g^2 + C_{j313}\nu_g^3, \quad
%d_j= C_{j121}\nu_g^1 + C_{j221}\nu_g^2 + C_{j321}\nu_g^3, \\
%e_j= C_{j122}\nu_g^1 + C_{j222}\nu_g^2 + C_{j322}\nu_g^3, \quad
%f_j= C_{j123}\nu_g^1 + C_{j223}\nu_g^2 + C_{j323}\nu_g^3, \\
%g_j= C_{j131}\nu_g^1 + C_{j231}\nu_g^2 + C_{j331}\nu_g^3, \quad
%h_j= C_{j132}\nu_g^1 + C_{j232}\nu_g^2 + C_{j332}\nu_g^3, \\
%i_j= C_{j133}\nu_g^1 + C_{j233}\nu_g^2 + C_{j333}\nu_g^3
\end{align*}
for $j=1,2,3$,
so that the corresponding equations are exactly the equalities \eqref{matrice2}.
In order to invert the relations determined by the previous system,
we claimed that the determinant of $M$ equals $(\nu_g^3)^{12}\det Q_g$,
where $Q_g$ is the $3\times3$ matrix defined by \eqref{matrice}.

We present here the \emph{Mathematica} code which allows us to check this equality.
We first define the $18\times18$ matrix $M$: here the variables \pmb{n1}, \pmb{n2} and \pmb{n3} stand for the components $\nu_g^1,\nu_g^2,\nu_g^3$ of the normal vector, and the variables \pmb{Cijhk} for the coefficients $C_{ijhk}$ of the tensor.
We then define the matrix $Q_g$ introduced in \eqref{matrice}, whose entries are indicated by \pmb{qij},
and we compute its determinant (multiplied by $(\nu_g^3)^{12}$).
Finally we evaluate the difference between the determinant of $M$ and $(\nu_g^3)^{12}\det Q_g$, which turns out to be zero.

The \emph{Mathematica} code is the following.

\begin{doublespace}
{\scriptsize
\noindent\(\pmb{M=\left(
\begin{array}{cccccccccccccccccc}
 \text{n3} & 0 & -\text{n1} & 0 & 0 & 0 & 0 & 0 & 0 & 0 & 0 & 0 & 0 & 0 & 0 & 0 & 0 & 0 \\
 0 & 0 & \text{n3} & 0 & 0 & -\text{n1} & 0 & 0 & 0 & 0 & 0 & 0 & 0 & 0 & 0 & 0 & 0 & 0 \\
 0 & \text{n3} & -\text{n2} & 0 & 0 & 0 & 0 & 0 & 0 & 0 & 0 & 0 & 0 & 0 & 0 & 0 & 0 & 0 \\
 0 & 0 & 0 & \text{n3} & -\text{n2} & 0 & 0 & 0 & 0 & 0 & 0 & 0 & 0 & 0 & 0 & 0 & 0 & 0 \\
 0 & 0 & 0 & 0 & \text{n3} & -\text{n2} & 0 & 0 & 0 & 0 & 0 & 0 & 0 & 0 & 0 & 0 & 0 & 0 \\
 0 & 0 & 0 & 0 & 0 & 0 & \text{n3} & 0 & -\text{n1} & 0 & 0 & 0 & 0 & 0 & 0 & 0 & 0 & 0 \\
 0 & 0 & 0 & 0 & 0 & 0 & 0 & 0 & \text{n3} & 0 & 0 & -\text{n1} & 0 & 0 & 0 & 0 & 0 & 0 \\
 0 & 0 & 0 & 0 & 0 & 0 & 0 & \text{n3} & -\text{n2} & 0 & 0 & 0 & 0 & 0 & 0 & 0 & 0 & 0 \\
 0 & 0 & 0 & 0 & 0 & 0 & 0 & 0 & 0 & \text{n3} & -\text{n2} & 0 & 0 & 0 & 0 & 0 & 0 & 0 \\
 0 & 0 & 0 & 0 & 0 & 0 & 0 & 0 & 0 & 0 & \text{n3} & -\text{n2} & 0 & 0 & 0 & 0 & 0 & 0 \\
 0 & 0 & 0 & 0 & 0 & 0 & 0 & 0 & 0 & 0 & 0 & 0 & \text{n3} & 0 & -\text{n1} & 0 & 0 & 0 \\
 0 & 0 & 0 & 0 & 0 & 0 & 0 & 0 & 0 & 0 & 0 & 0 & 0 & 0 & \text{n3} & 0 & 0 & -\text{n1} \\
 0 & 0 & 0 & 0 & 0 & 0 & 0 & 0 & 0 & 0 & 0 & 0 & 0 & \text{n3} & -\text{n2} & 0 & 0 & 0 \\
 0 & 0 & 0 & 0 & 0 & 0 & 0 & 0 & 0 & 0 & 0 & 0 & 0 & 0 & 0 & \text{n3} & -\text{n2} & 0 \\
 0 & 0 & 0 & 0 & 0 & 0 & 0 & 0 & 0 & 0 & 0 & 0 & 0 & 0 & 0 & 0 & \text{n3} & -\text{n2} \\
 0 & 0 & \text{a1} & 0 & \text{b1} & \text{c1} & 0 & 0 & \text{d1} & 0 & \text{e1} & \text{f1} & 0 & 0 & \text{g1} & 0 & \text{h1} & \text{i1} \\
 0 & 0 & \text{a2} & 0 & \text{b2} & \text{c2} & 0 & 0 & \text{d2} & 0 & \text{e2} & \text{f2} & 0 & 0 & \text{g2} & 0 & \text{h2} & \text{i2} \\
 0 & 0 & \text{a3} & 0 & \text{b3} & \text{c3} & 0 & 0 & \text{d3} & 0 & \text{e3} & \text{f3} & 0 & 0 & \text{g3} & 0 & \text{h3} & \text{i3}
\end{array}
\right);}\)
}
\end{doublespace}

\begin{doublespace}
\noindent\(\pmb{\text{DM}=\text{Det}[M];}\)
\end{doublespace}

\begin{doublespace}
\noindent\(\pmb{\text{a1}=\text{C1111} \text{n1} + \text{C1211} \text{n2}+\text{C1311} \text{n3};}\\
\pmb{\text{b1}=\text{C1112} \text{n1} + \text{C1212} \text{n2}+\text{C1312} \text{n3};}\\
\pmb{\text{c1}=\text{C1113} \text{n1} + \text{C1213} \text{n2}+\text{C1313} \text{n3};}\\
\pmb{\text{d1}=\text{C1121} \text{n1} + \text{C1221} \text{n2}+\text{C1321} \text{n3};}\\
\pmb{\text{e1}=\text{C1122} \text{n1} + \text{C1222} \text{n2}+\text{C1322} \text{n3};}\\
\pmb{\text{f1}=\text{C1123} \text{n1} + \text{C1223} \text{n2}+\text{C1323} \text{n3};}\\
\pmb{\text{g1}=\text{C1131} \text{n1} + \text{C1231} \text{n2}+\text{C1331} \text{n3};}\\
\pmb{\text{h1}=\text{C1132} \text{n1} + \text{C1232} \text{n2}+\text{C1332} \text{n3};}\\
\pmb{\text{i1}=\text{C1133} \text{n1} + \text{C1233} \text{n2}+\text{C1333} \text{n3};}\\
\pmb{\text{a2}=\text{C2111} \text{n1} + \text{C2211} \text{n2}+\text{C2311} \text{n3};}\\
\pmb{\text{b2}=\text{C2112} \text{n1} + \text{C2212} \text{n2}+\text{C2312} \text{n3};}\\
\pmb{\text{c2}=\text{C2113} \text{n1} + \text{C2213} \text{n2}+\text{C2313} \text{n3};}\\
\pmb{\text{d2}=\text{C2121} \text{n1} + \text{C2221} \text{n2}+\text{C2321} \text{n3};}\\
\pmb{\text{e2}=\text{C2122} \text{n1} + \text{C2222} \text{n2}+\text{C2322} \text{n3};}\\
\pmb{\text{f2}=\text{C2123} \text{n1} + \text{C2223} \text{n2}+\text{C2323} \text{n3};}\\
\pmb{\text{g2}=\text{C2131} \text{n1} + \text{C2231} \text{n2}+\text{C2331} \text{n3};}\\
\pmb{\text{h2}=\text{C2132} \text{n1} + \text{C2232} \text{n2}+\text{C2332} \text{n3};}\\
\pmb{\text{i2}=\text{C2133} \text{n1} + \text{C2233} \text{n2}+\text{C2333} \text{n3};}\\
\pmb{\text{a3}=\text{C3111} \text{n1} + \text{C3211} \text{n2}+\text{C3311} \text{n3};}\\
\pmb{\text{b3}=\text{C3112} \text{n1} + \text{C3212} \text{n2}+\text{C3312} \text{n3};}\\
\pmb{\text{c3}=\text{C3113} \text{n1} + \text{C3213} \text{n2}+\text{C3313} \text{n3};}\\
\pmb{\text{d3}=\text{C3121} \text{n1} + \text{C3221} \text{n2}+\text{C3321} \text{n3};}\\
\pmb{\text{e3}=\text{C3122} \text{n1} + \text{C3222} \text{n2}+\text{C3322} \text{n3};}\\
\pmb{\text{f3}=\text{C3123} \text{n1} + \text{C3223} \text{n2}+\text{C3323} \text{n3};}\\
\pmb{\text{g3}=\text{C3131} \text{n1} + \text{C3231} \text{n2}+\text{C3331} \text{n3};}\\
\pmb{\text{h3}=\text{C3132} \text{n1} + \text{C3232} \text{n2}+\text{C3332} \text{n3};}\\
\pmb{\text{i3}=\text{C3133} \text{n1} + \text{C3233} \text{n2}+\text{C3333} \text{n3};}\)
\end{doublespace}

\begin{doublespace}
\noindent\(\pmb{\text{q11}=\text{C1111} \text{n1} \text{n1} + \text{C1212} \text{n2} \text{n2} + \text{C1313} \text{n3} \text{n3} + (\text{C1112}
+ \text{C1211}) \text{n1} \text{n2} + }\\
\pmb{(\text{C1113}+\text{C1311}) \text{n1} \text{n3} + (\text{C1213}+\text{C1312}) \text{n2} \text{n3};}\\
\pmb{\text{q12}=\text{C1121} \text{n1} \text{n1} + \text{C1222} \text{n2} \text{n2} + \text{C1323} \text{n3} \text{n3} + (\text{C1122} + \text{C1221})
\text{n1} \text{n2} + }\\
\pmb{(\text{C1123}+\text{C1321}) \text{n1} \text{n3} + (\text{C1223}+\text{C1322}) \text{n2} \text{n3};}\\
\pmb{\text{q13}=\text{C1131} \text{n1} \text{n1} + \text{C1232} \text{n2} \text{n2} + \text{C1333} \text{n3} \text{n3} + (\text{C1132} + \text{C1231})
\text{n1} \text{n2} + }\\
\pmb{(\text{C1133}+\text{C1331}) \text{n1} \text{n3} + (\text{C1233}+\text{C1332}) \text{n2} \text{n3};}\\
\pmb{\text{q21}=\text{C2111} \text{n1} \text{n1} + \text{C2212} \text{n2} \text{n2} + \text{C2313} \text{n3} \text{n3} + (\text{C2112} + \text{C2211})
\text{n1} \text{n2} + }\\
\pmb{(\text{C2113}+\text{C2311}) \text{n1} \text{n3} + (\text{C2213}+\text{C2312}) \text{n2} \text{n3};}\\
\pmb{\text{q22}=\text{C2121} \text{n1} \text{n1} + \text{C2222} \text{n2} \text{n2} + \text{C2323} \text{n3} \text{n3} + (\text{C2122} + \text{C2221})
\text{n1} \text{n2} + }\\
\pmb{(\text{C2123}+\text{C2321}) \text{n1} \text{n3} + (\text{C2223}+\text{C2322}) \text{n2} \text{n3};}\\
\pmb{\text{q23}=\text{C2131} \text{n1} \text{n1} + \text{C2232} \text{n2} \text{n2} + \text{C2333} \text{n3} \text{n3} + (\text{C2132} + \text{C2231})
\text{n1} \text{n2} + }\\
\pmb{(\text{C2133}+\text{C2331}) \text{n1} \text{n3} + (\text{C2233}+\text{C2332}) \text{n2} \text{n3};}\\
\pmb{\text{q31}=\text{C3111} \text{n1} \text{n1} + \text{C3212} \text{n2} \text{n2} + \text{C3313} \text{n3} \text{n3} + (\text{C3112} + \text{C3211})
\text{n1} \text{n2} + }\\
\pmb{(\text{C3113}+\text{C3311}) \text{n1} \text{n3} + (\text{C3213}+\text{C3312}) \text{n2} \text{n3};}\\
\pmb{\text{q32}=\text{C3121} \text{n1} \text{n1} + \text{C3222} \text{n2} \text{n2} + \text{C3323} \text{n3} \text{n3} + (\text{C3122} + \text{C3221})
\text{n1} \text{n2} + }\\
\pmb{(\text{C3123}+\text{C3321}) \text{n1} \text{n3} + (\text{C3223}+\text{C3322}) \text{n2} \text{n3};}\\
\pmb{\text{q33}=\text{C3131} \text{n1} \text{n1} + \text{C3232} \text{n2} \text{n2} + \text{C3333} \text{n3} \text{n3} + (\text{C3132} + \text{C3231})
\text{n1} \text{n2} + }\\
\pmb{(\text{C3133}+\text{C3331}) \text{n1} \text{n3} + (\text{C3233}+\text{C3332}) \text{n2} \text{n3};}\)
\end{doublespace}

\begin{doublespace}
\noindent\(\pmb{Q=\left(
\begin{array}{ccc}
 \text{q11} & \text{q12} & \text{q13} \\
 \text{q21} & \text{q22} & \text{q23} \\
 \text{q31} & \text{q32} & \text{q33}
\end{array}
\right);}\)
\end{doublespace}

\begin{doublespace}
\noindent\(\pmb{\text{DQ}=\text{n3}^{12}\text{Det}[Q];}\)
\end{doublespace}

\begin{doublespace}
\noindent\(\pmb{\text{ExpandAll}[\text{DQ}]-\text{ExpandAll}[\text{DM}]}\)
\end{doublespace}

\end{section}

%%%%%%%%%%%%%%%%%%%%%%%%%%%%%%%%%%%%%%%%%%%%%%%%%%%%%%%%%%%%%%%%%%%%%%%%%%%%%%%%%%%%%%%%%%%%%%%%%%%%%%%%%%%%%%%%%%%%%%%%%%%%%%%%%%%%%%%%
%%%%%%%%%%%%%%%%%%%%%%%%%%%%%%%%%%%%%%%%%%%%%%%%%%%%%%%%%%%%%%%%%%%%%%%%%%%%%%%%%%%%%%%%%%%%%%%%%%%%%%%%%%%%%%%%%%%%%%%%%%%%%%%%%%%%%%%%
%%%%%%%%%%%%%%%%%%%%%%%%%%%%%%%%%%%%%%%%%%%%%%%%%%%%%%%%%%%%%%%%%%%%%%%%%%%%%%%%%%%%%%%%%%%%%%%%%%%%%%%%%%%%%%%%%%%%%%%%%%%%%%%%%%%%%%%%
%%%%%%%%%%%%%%%%%%%%%%%%%%%%%%%%%%%%%%%%%%%%%%%%%%%%%%%%%%%%%%%%%%%%%%%%%%%%%%%%%%%%%%%%%%%%%%%%%%%%%%%%%%%%%%%%%%%%%%%%%%%%%%%%%%%%%%%%
%%%%%%%%%%%%%%%%%%%%%%%%%%%%%%%%%%%%%%%%%%%%%%%%%%%%%%%%%%%%%%%%%%%%%%%%%%%%%%%%%%%%%%%%%%%%%%%%%%%%%%%%%%%%%%%%%%%%%%%%%%%%%%%%%%%%%%%%

\bigskip
\bigskip
\noindent
\textsc{Acknowledgments.}
Part of this work was carried out while visiting Filippo Cagnetti, whom I thank for the kind ospitality, at the Instituto Superior T\'{e}cnico in Lisbon. I am also very grateful to Massimiliano Morini for multiple valuable discussions.

%%%%%%%%%%%%%%%%%%%%%%%%%%%%%%%%%%%%%%%%%%%%%%%%%%%%%%%%%%%%%%%%%%%%%%%%%%%%%%%%%%%%%%%%%%%%%%%%%%%%%%%%%%%%%%%%%%%%%%%%%%%%%%%%%%%%%%%%
%%%%%%%%%%%%%%%%%%%%%%%%%%%%%%%%%%%%%%%%%%%%%%%%%%%%%%%%%%%%%%%%%%%%%%%%%%%%%%%%%%%%%%%%%%%%%%%%%%%%%%%%%%%%%%%%%%%%%%%%%%%%%%%%%%%%%%%%
%%%%%%%%%%%%%%%%%%%%%%%%%%%%%%%%%%%%%%%%%%%%%%%%%%%%%%%%%%%%%%%%%%%%%%%%%%%%%%%%%%%%%%%%%%%%%%%%%%%%%%%%%%%%%%%%%%%%%%%%%%%%%%%%%%%%%%%%
%%%%%%%%%%%%%%%%%%%%%%%%%%%%%%%%%%%%%%%%%%%%%%%%%%%%%%%%%%%%%%%%%%%%%%%%%%%%%%%%%%%%%%%%%%%%%%%%%%%%%%%%%%%%%%%%%%%%%%%%%%%%%%%%%%%%%%%%
%%%%%%%%%%%%%%%%%%%%%%%%%%%%%%%%%%%%%%%%%%%%%%%%%%%%%%%%%%%%%%%%%%%%%%%%%%%%%%%%%%%%%%%%%%%%%%%%%%%%%%%%%%%%%%%%%%%%%%%%%%%%%%%%%%%%%%%%

\bigskip
{\frenchspacing

}

\end{document}